\newtheorem{theorem}{Theorem}
\newtheorem{propo}[theorem]{Proposition}
\newtheorem{cor}[theorem]{Corollary}
\newtheorem{lem}[theorem]{Lemma}
\theoremstyle{definition}
\newtheorem{conj}{Conjecture}
\newtheorem{rem}{Remark}
\newtheorem{hyp}{Hypothesis}
\newtheorem{Assumption}{Assumption}
\begin{document}

\title{
Dispersal-induced growth or decay in a time-periodic environment. 
The case of reducible migration matrices.}

\author[1]{\fnm{Michel} \sur{Benaim}}\email{michel.benaim@unine.ch}

\author[2]{\fnm{Claude} \sur{Lobry}}\email{claude\_lobry@orange.fr}

\author*[3,5]{\fnm{Tewfik} \sur{Sari}}\email{tewfik.sari@inrae.fr}

\author[4]{\fnm{Edouard} \sur{Strickler}}\email{edouard.strickler@univ-lorraine.fr}

\affil[1]{\orgdiv{
Institut de Math\'ematiques}, 
\orgname{Universit\'e de Neuch\^atel}, 
\country{Switzerland}}

\affil[2]{\orgdiv{C.R.H.I}, 
\orgname{Universit\'e Nice Sophia Antipolis}, 
\country{France}}

\affil[3]{\orgdiv{ITAP}, 
\orgname{Univ Montpellier, INRAE, Institut Agro}, \city{Montpellier}, 
\country{France}}

\affil[4]{\orgdiv{IECL}, 
\orgname{Universit\'e de Lorraine, CNRS, Inria},  
\city{Nancy}, %
\country{France}}

\affil[5]{\orgdiv{GreenOwl project team}, \orgname{Universit\'e C\^ote d'Azur, Inria, INRAE, CNRS-Sorbonne Universit\'e (LOV)}, 
\city{Valbonne},
\country{France}}

\abstract{This paper is a follow-up to a previous work where we considered  populations with time-varying growth rates living in patches and irreducible migration matrix between the patches. Each population, when isolated, would become extinct. Dispersal-induced growth (DIG) occurs when the populations are able to persist and grow exponentially when dispersal among the populations is present. 
In this paper, we consider the situation where the migration matrix is not necessarily irreducible. 
We provide a mathematical analysis of the DIG phenomenon, in the context of a deterministic model with periodic variation of growth rates and  migration. Our results apply in the case, 
important for applications, where there is migration in one direction in one season and in the other direction in another season. 
We also consider dispersal-induced decay (DID), where each population, when isolated, grows exponentially, while populations die out when dispersal between populations is present.}

\keywords{Dispersal-induced growth, Dispersal-induced decay,
Periodic linear cooperative systems, Principal Lyapunov exponent, 
Averaging, Singular perturbations, 
Perron root, Metzler matrices, Sink, Source}


\pacs[MSC Classification]{ 92D25, 34A30, 34C11, 34C29, 34E15, 34D08, 37N25}

\maketitle

\tableofcontents

\section{Introduction}
\label{intro}

We considered in  \cite{BLSS} the model of 
populations of sizes $x_i(t)$ ($1\leq i\leq n$), inhabiting $n$ patches, and subject to time-periodic local growth rates $r_i(t)$ ($1\leq i\leq n$), and migration $m\ell_{ij}(t)\geq 0$,  from patch $j$ to patch $i$,  where the parameter $m\geq 0$ measures the strength of migration, and 
the numbers 
$
\ell_{ij}(t)$, $i\neq j$,
 encode the topology of the dispersal network and the relative rates of dispersal among different patches: At time $t$, there is a migration from patch $j$ to patch $i$ if and only if $\ell_{ij}(t) > 0$. 
We then have the differential equations
\begin{equation}\label{eqBLSS}
\frac{dx_i}{dt}=r_i(t)x_i+m\sum_{j\neq i}\left(\ell_{ij}(t)x_j-\ell_{ij}(t)x_i\right),
\quad 1\leq i\leq n. 
\end{equation}

It is assumed that

\begin{hyp}\label{H1} The functions $r_i(t)$ and $\ell_{ij}(t)$ are $1$-periodic 
functions, which are piecewise continuous with a finite set of discontinuity points on each period. Moreover, they have left and right limits at the discontinuity points.
\end{hyp}

We proved, see \cite[Proposition 1]{BLSS} that in the irreducible migration case (i.e. for any $t$, any two patches are connected by migration, either directly, or through other patches), if $m>0$, any solution of (\ref{eqBLSS}) with $x_i(0)\geq 0$ for $1\leq i\leq n$ and $x_i(0)> 0$ for some $i$, satisfies $x_i(t)>0$ for all $i$ and all $t>0$. Moreover, the limits
\begin{equation}\label{Lambdai}
\Lambda[x_i]:=\lim_{t\to\infty}\frac{1}{t}\ln(x_i(t)),
\end{equation}
exist, are equal, and 
they do not depend on the initial condition. Their common value is called the growth rate of the system \eqref{eqBLSS}. When this property is satisfied, we say that the system \emph{admits a growth rate}, or \emph{the growth rate exists}.  

Let $T>0$. If we replace the time $t$ by $t/T$ in the right-hand side of \eqref{eqBLSS} we obtain the $T$-periodic linear system:
\begin{equation}\label{eq1}
\frac{dx_i}{dt}=r_i(t/T)x_i+m\sum_{j\neq i}\left(\ell_{ij}(t/T)x_j-\ell_{ij}(t/T)x_i\right),
\quad 1\leq i\leq n. 
\end{equation}
The common value of the $\Lambda[x_i]$ given by \eqref{Lambdai} is denoted $\Lambda(m,T)$ to recall its dependence on the parameters $m$ and $T$.  The main results in \cite{BLSS} are on the asymptotic properties of $\Lambda(m,T)$ when $T\to 0$ and $T\to\infty$.

A patch $i$ is called a  \emph{sink} when $\overline{r}_i=\int_0^1r_i(t)dt<0$, i.e. in the
absence of dispersion ($m=0$), the population goes to extinction. In the case where all patches are sinks, it is sometimes possible to find  $m>0$ and $T>0$ such that $\Lambda(m,T)>0$,  i.e. the population is exponentially growing on each patch.
Since it
is possible for populations in a set of patches, with dispersal among them, to persist and
grow despite the fact that all these patches are sinks, this phenomenon was called 
\emph{dispersal-induced growth} (DIG) by Katriel \cite{Katriel}, who considered the case of time independent and  symmetric  migration (i.e. $\ell_{ij}=\ell_{ji}$) and when the functions $r_i(t)$ are continuous. 

This surprising phenomenon of 
populations that can \emph{persist in an environment consisting of sink habitats only} was first studied by Jansen and Yoshimura \cite{Jansen} and has already been pointed by Holt \cite{Holt} on particular systems and called {\em inflation} \cite{Holt-et-al}.
For further details and complements on the mathematical modelling of this phenomenon and the biological motivations, the reader is referred to \cite{benaim,BLSS,Katriel} and the references therein.

The matrix
$L(t)$ whose off diagonal elements are $\ell_{ij}(t)$, $i\neq j$, and diagonal elements $\ell_{ii}(t)$ are given by
$
\ell_{ii}(t)=-\sum_{j\neq i} \ell_{ji}(t),$ $1\leq i\leq n,
$
is called the migration or dispersal matrix. 
Using the matrix $L(t)$,  the equations (\ref{eq1}) can be written as
\begin{equation}\label{eq3}
\frac{dx}{dt}=A(t/T){x},
\quad
\mbox{ where } 
A(\tau)=R(\tau)+mL(\tau),
\end{equation}
${x}=\left(
	x_1,\cdots,
	x_n\right)^\top$, and 
$R(\tau)={\rm diag}\left(
	r_1(\tau),\cdots,r_n(\tau)
\right).$
In addition to the assumptions that $L(\tau)$  has non-negative off diagonal elements, it is assumed in \cite{BLSS} that 
for all $t$, $L(t)$ is {\it{irreducible}}.
This assumption is certainly not realized in many real systems.
For instance on a two-patches system with two seasons, if there is migration in one direction in one season and in the other direction in
another season, then the matrix $L(\tau)$ would not be irreducible for the times at which
migration is in one direction only. The aim of this paper is to relax this condition on the irreducibility of the migration matrix and to replace it by  the following assumption. 
\begin{hyp}\label{H2}
The average migration matrix $\overline{L}=\int_0^1L(t)dt$ is irreducible.
\end{hyp}

A sufficient (but non necessary) condition ensuring that $\overline{L}$ is irreducible is that $L(t)$ is irreducible for some $t\in[0,1]$. Therefore, the results in this paper extend the results of \cite{Katriel} where the migration matrix is assumed to be time-independent and irreducible and the results of \cite{BLSS}, where $L(t)$ is assumed to be irreducible for all $t\in[0,1]$.

The results of \cite{Katriel} are based on the strict {monotonicity} of the function $T\mapsto\Lambda(m,T)$, which is true when the matrix $L$ is time independent and symmetric. Indeed, it is strictly increasing, except in the case where all the $r_i$'s are equals (see  \cite[Lemma 2]{Katriel}). This result follows from general results of Liu et al. \cite{liu} on the principal eigenvalue of a periodic linear system. 
But, as we have shown in \cite{BLSS},  the 
{monotonicity} 
of $T\mapsto \Lambda(m,T)$ is no longer true if $L(\tau)$ is not constant. We conjectured in \cite{BLSS} that this function is strictly increasing in the non symmetric constant case. But this conjecture is not true, as it was shown by Monmarch\'e et al. \cite[Figure 4]{MSS}.

To prove the results in \cite{BLSS} we used classical methods of dynamical systems theory: the Perron-Frobenius theorem, the method of averaging and Tikhonov's theorem on singular perturbations. The proofs in the more general context of the present paper use the same tools. For example, the existence of the growth rate $\Lambda(m,T)$ follows from the Perron-Frobenius theorem applied to the monodromy matrix, which is nonnegative and irreducible since $\overline{L}$ is irreducible (see Lemma \ref{lem1}, below). 
The determination of the limits of $\Lambda(m,T)$ as $m$ tends to 0 or infinity, follows the same steps as the proofs in \cite{BLSS} and makes use of the method of averaging and Tikhonov's theorem on singular perturbations (see Appendix \ref{Proofs}). We discuss the DIG phenomenon in the more general context, where Hypothesis \ref{H2} is satisfied. In this paper, we also consider the case of populations in a set of patches, with dispersal among them, that go to extinction, despite the fact that all these patches are sources. We call this phenomenon 
\emph{dispersal-induced decay} (DID).

The paper is organized as follows. In Section \ref{Results} we give our main results on the existence of the growth rate and its limits as $m$ and/or $T$ tend to 0 or infinity (some proofs are postponed to Appendix \ref{Proofs}). 
In Section \ref{Numerical}, by means of examples of the cases with 2 and 3 patches and {piecewise constant} growth rates and migration, we  illustrate our principal results and show how the needed hypothesis can be verified. 
In Section \ref{Discussion} we discuss the results in more detail and propose some questions for further research.
In Appendix \ref{SingPert} we provide a statement of the theorem of Tikhonov on singular perturbations which is used to prove the asymptotic behavior of the growth rate when the period is large ($T\to \infty$), or the migration rate is large ($m\to \infty$).

\section{Results}\label{Results}

Throughout the paper, the  following notation is used:
If $u(\tau)$ is any 1-periodic object (number, vector, matrix...), we denote by $\overline{u}=\int_0^1u(\tau)d\tau$ its average on one period. We also use the following notations:
for $x\in\mathbb{R}^n$,
$x\geq 0$ means that for all $i$, $x_i\geq 0$, $x> 0$ means that $x\geq 0$ and $x\neq 0$, and $x\gg 0$ means that for all $i$, $x_i> 0$. 

Let $\mathcal{M}$ be the set of Metzler $n\times n$ matrices, i.e. having off diagonal nonnegative entries. Let 
$A:\mathbb{R}\to \mathcal{M},$
be a 1-periodic function.
We consider the linear 1-periodic system of differential equations
\begin{equation}
\label{eqA}
\frac{dx}{dt}=A(t)x,
\end{equation}
with initial condition $x(0)>0$, under the following assumptions:
\begin{Assumption}\label{i}
There exist $\tau_i$, $i=0\ldots N+1$ with $0=\tau_0<\tau_1< \ldots<\tau_N<\tau_{N+1}=1$, such that for  $k=0\ldots N$, ${A}|_{[\tau_k,\tau_{k+1})}$ are continuous functions, and
$\lim_{\tau\to\tau_{k+1}}A(\tau)$ exist. 
\end{Assumption}\label{ii}
\begin{Assumption}
The average matrix $\overline{A}=\int_0^1A(t)dt$ is irreducible.
\end{Assumption}

{Assumption \ref{i} implies
that} the
solutions of \eqref{eqA} are continuous and piecewise $\mathcal{C}^1$ functions satisfying  \eqref{eq1} except at the points of discontinuity of the function $A(t)$.

\subsection{The growth rate}\label{INNsec}
{The aim of this section is to show that under Assumptions \ref{i} and \ref{ii}, the system  \eqref{eqA} admits a growth rate.} 
Since the system \eqref{eqA} is a 1-periodic system, its study reduces to the study of its monodromy matrix $X(1)$, where $X(1)$ is the value at time $t=1$ of 
the fundamental matrix solution $X(t)$, i.e.  the solution of the matrix-valued differential equation
\begin{equation}\label{MVDE}
\frac{dX}{dt}=A(t)X,\qquad X(0)=I,
\end{equation}
where $I$ is the identity matrix. Since $A(t)$ is Metzler, for all $t>0$, $X(t)$ has non-negative entries. 

\begin{lem}\label{lem1}
Under Assumptions  \ref{i} and \ref{ii}, 
for all $t$ sufficiently large, $X(t)$ has positive entries.
\end{lem}

\begin{proof}
The proof is similar to the proof of \cite[Lemma 6(i)]{BLSSarXiv}.
First observe that $X(t)$ has positive entries if and only if  $e^{rt} X(t)$ has positive entries, for all $r > 0.$
Therefore, replacing $A(\tau)$ by $A(\tau) + r Id$ for $r > \|A\|_{\infty},$\footnote{$\|A\|_{\infty} = \sup_{\tau \in [0,1]} \|A( \tau)\|$ is finite thanks to Assumption \ref{i}. } we can assume without loss of generality that $A(\tau)$ has nonnegative entries for all $\tau \in [0,1].$

Let $x(t) = X(t) x(0)$ with $x(0)>0.$
Suppose $x_i(0) > 0.$ Then $x_i(t) > 0$ because $\dot{x}_i(t) \geq A_{ii}(t) x_i(t) \geq 0.$
 By irreducibility of $\overline{A}$, for all $j \neq i$, there exists a sequence $i_0 = i, i_1, \ldots, i_p = j$ such that $\overline{A}_{i_k i_{k+1}} > 0$ for $k = 1,\ldots p.$ Since $ \tau \mapsto A(\tau)$ is 1-periodic, one has, for any integer $N \geq 1$,
 $$\frac{1}{N} \int_0^N A(\tau )d \tau=  \overline{A}.$$
 Therefore,  there exists a sequence $t_1 < t_2 < \ldots < t_p$ with  
 $$A_{i_k i_{k+1}}(t_k) > 0.$$
  By right continuity of $\tau \mapsto A(\tau),$ we also have $A_{i_k i_{k+1}}(t) > 0$ for $t_k \leq t \leq t_k + \varepsilon$ for some $\varepsilon > 0.$
  It follows that $\dot{x}_{i_1}(t) \geq A_{i_1,i}(t) x_1(t) > 0$ for all $t_1 \leq t \leq t_1 + \varepsilon.$
  Hence $x_{i_1}(t) > 0$ for all $t > t_1.$ Similarly $x_{i_2}(t) > 0$ for all $t > t_2$ and, by recursion, $x_j(t) > 0$ for all $t > t_p.$  In summary, we have shown that for all $i,j \in \{1, \ldots, n\}$ there exists a time $t_p$ depending on $i,j$, such that for all $t \geq t_p$ $x_j(t) > 0$ whenever $x_i(0) > 0.$ Hence, for all $t$ sufficiently large, provided $x_i(0) > 0$, one has $x_j(t) > 0$, which implies that the $i-th$ column of $X(t)$ has positive entries, and conclude the proof.
\end{proof}

The previous lemma insures that the monodromy matrix $X(1)$ is irreducible. Indeed, $X(1)^p=X(p)$ has positive entries for all sufficiently large integer $p$.
By the Perron-Frobenius theorem for irreducible non negative matrices \cite[Page 673]{MeyerBook}, $X(1)$ has a dominant eigenvalue (an eigenvalue of maximal modulus, called the \emph{Perron-Frobenius root}), which is positive. 
We denote it by  
$\mu$. There exists a unique vector, called \emph{Perron-Frobenius vector}, $\pi\gg 0$, such that $\sum_{i=1}^n\pi_i=1$ and $X(1)\pi=\mu\pi$.

The change of variables
$
\rho=\sum_{i=1}^nx_i,$ 
$\theta=\frac{x}{\rho},$
transforms the system \eqref{eqA} into
\begin{align}\label{eqrho}
\frac{d\rho}{dt}&=\langle A(t)\theta,
{\bf 1}\rangle \rho,
\\
\label{eqtheta}
\frac{d\theta}{dt}&=A(t)\theta- \langle A(t)\theta,
{\bf 1}\rangle \theta.
\end{align}
Here ${\bf 1}=(1,\ldots,1)^\top$ and $\langle x,{\bf 1}\rangle=\sum_{i=1}^nx_i$ is the usual Euclidean scalar product of vectors $x$ and ${\bf 1}$. 
The equation \eqref{eqtheta} is a differential equation on $\Delta$, the unit simplex $n-1$ simplex of $\mathbb{R}_+^n=\{x\in\mathbb{R}^n:x>0\}$, defined by
$$\Delta:=\left\{x\in\mathbb{R}_+^n:\textstyle\sum_{i=1}^nx_i=1\right\}.$$

\begin{theorem}\label{thm2}
Let Assumptions \ref{i} and \ref{ii} be satisfied. Let $\mu$ be the Perron-Frobenius root of the monodromy matrix $X(1)$ of \eqref{eqA}. Let 
$\pi\in\Delta$, be its Perron-Frobenius vector.
Let $\Lambda:=\ln(\mu)$. The solution  $\theta^*(t)$ of 
 \eqref{eqtheta}, such that $\theta^*(0)=\pi$, is 1-periodic, and globally asymptotically stable in $\Delta$. Moreover, if $x(t)$ is a solution of \eqref{eqA} such that $x(0)>0$, then $x(t)\gg0$ for all sufficiently large $t$ and, for all $i$,
\begin{equation} \label{Lambda=Lambda[rho]1}
 \lim_{t\to\infty}\frac{1}{t}\ln(x_i(t))=\int_0^1\langle A(\tau)\theta^*(\tau),{\bf 1}\rangle d\tau=\Lambda.
 \end{equation}
 \end{theorem}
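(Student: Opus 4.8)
The plan is to reduce everything to the Perron-Frobenius structure of the monodromy matrix $X(1)$, using the change of variables $(\rho,\theta)$ that produced \eqref{eqrho}--\eqref{eqtheta}. The proof splits into three parts: (1) $\theta^*$ is $1$-periodic; (2) $\theta^*$ attracts every solution of \eqref{eqtheta}, uniformly; (3) the growth-rate formula. I will use repeatedly that, by the derivation of \eqref{eqtheta}, if $x(t)=X(t)\theta_0$ with $\theta_0\in\Delta$, then $\theta(t)=X(t)\theta_0/\langle X(t)\theta_0,{\bf 1}\rangle$ is the corresponding solution of \eqref{eqtheta}; this is well defined because $X(t)$ is invertible and nonnegative, so $X(t)\theta_0>0$ and $\langle X(t)\theta_0,{\bf 1}\rangle>0$, and in particular $\Delta$ is positively invariant. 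For part (1), taking $\theta_0=\pi$ and using $X(1)\pi=\mu\pi$ together with $\langle\pi,{\bf 1}\rangle=1$ gives $\theta^*(1)=\pi=\theta^*(0)$; since \eqref{eqtheta} has $1$-periodic coefficients, $t\mapsto\theta^*(t+1)$ solves it with the same datum at $t=0$, hence equals $\theta^*$ by uniqueness.

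For part (2), the key observation is that the time-one (Poincar\'e) map $P:\Delta\to\Delta$, $P(\theta_0)=\theta(1;\theta_0)$, is the projective action of $X(1)$, and, since the fundamental solution of a $1$-periodic system satisfies $X(t+1)=X(t)X(1)$ (hence $X(k+s)=X(s)X(1)^k$), one has $P^k(\theta_0)=X(1)^k\theta_0/\langle X(1)^k\theta_0,{\bf 1}\rangle$. By Lemma \ref{lem1}, $X(1)^p=X(p)$ is strictly positive for $p$ large, so $X(1)$ is primitive, and therefore $X(1)^k/\mu^k\to\pi v^{\top}$, where $v\gg0$ is the left Perron-Frobenius vector normalized by $v^{\top}\pi=1$. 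Since $v^{\top}\theta_0>0$ for every $\theta_0\in\Delta$, this yields $P^k(\theta_0)\to\pi$. To interpolate, write $t=k+s$ with $k=\lfloor t\rfloor$, $s\in[0,1)$, so that $\theta(k+s;\theta_0)=X(s)P^k(\theta_0)/\langle X(s)P^k(\theta_0),{\bf 1}\rangle$; because $\{X(s):s\in[0,1]\}$ is compact and the projective action is jointly continuous, $\sup_{s\in[0,1]}|\theta(k+s;\theta_0)-\theta^*(s)|\to0$ as $k\to\infty$, where $\theta^*(s)=X(s)\pi/\langle X(s)\pi,{\bf 1}\rangle=\theta^*(k+s)$ by periodicity. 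Hence $\theta(t;\theta_0)-\theta^*(t)\to0$ uniformly in $t$; Lyapunov stability (and thus full global asymptotic stability) follows the same way, or more cleanly from Birkhoff's theorem, which makes some iterate $P^p$ a strict contraction of the Hilbert projective metric on $\mathrm{int}\,\Delta$.

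For part (3), let $x(t)$ solve \eqref{eqA} with $x(0)>0$; Lemma \ref{lem1} gives $x(t)=X(t)x(0)\gg0$ for $t\ge t_1$. From \eqref{eqrho}, $\ln\rho(t)-\ln\rho(0)=\int_0^t\langle A(\tau)\theta(\tau),{\bf 1}\rangle\,d\tau$, so $\tfrac1t\ln\rho(t)=\tfrac1t\ln\rho(0)+\tfrac1t\int_0^t\langle A(\tau)\theta(\tau),{\bf 1}\rangle\,d\tau$. Splitting $\langle A(\tau)\theta(\tau),{\bf 1}\rangle=\langle A(\tau)\theta^*(\tau),{\bf 1}\rangle+\langle A(\tau)(\theta(\tau)-\theta^*(\tau)),{\bf 1}\rangle$: the Ces\`aro average of the second term tends to $0$ since $\|A\|_\infty<\infty$ and $\theta(\tau)-\theta^*(\tau)\to0$ by part (2), while the first term is $1$-periodic, so its Ces\`aro average tends to its mean $\int_0^1\langle A(\tau)\theta^*(\tau),{\bf 1}\rangle\,d\tau$. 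Applying \eqref{eqrho} to $x^*(t)=X(t)\pi$ gives $\tfrac{d}{dt}\ln\langle X(t)\pi,{\bf 1}\rangle=\langle A(t)\theta^*(t),{\bf 1}\rangle$, hence $\int_0^1\langle A(\tau)\theta^*(\tau),{\bf 1}\rangle\,d\tau=\ln\langle X(1)\pi,{\bf 1}\rangle-\ln\langle\pi,{\bf 1}\rangle=\ln\mu=\Lambda$, so $\tfrac1t\ln\rho(t)\to\Lambda$. Finally, $\theta^*$ is $1$-periodic with $\theta^*\gg0$, hence bounded away from $\partial\Delta$, and $\theta(t)-\theta^*(t)\to0$, so each $\theta_i(t)$ stays in a fixed compact subinterval of $(0,1)$ for $t$ large; therefore $\tfrac1t\ln x_i(t)=\tfrac1t\ln\theta_i(t)+\tfrac1t\ln\rho(t)\to\Lambda$, which is \eqref{Lambda=Lambda[rho]1}.

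I expect part (2) to be the main obstacle: one must upgrade the discrete convergence $P^k(\theta_0)\to\pi$ --- essentially the power method for the primitive matrix $X(1)$, or alternatively a Birkhoff-Hopf contraction in the Hilbert metric --- to uniform convergence of the full continuous-time trajectory to the periodic orbit $\theta^*$, with enough uniformity in the interpolation variable $s$ both to obtain stability (and not merely attraction) and to justify the Ces\`aro step in part (3). Parts (1) and (3) are then essentially bookkeeping with the Perron-Frobenius data delivered by Lemma \ref{lem1}.
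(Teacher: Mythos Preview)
Your proof is correct and follows essentially the same route as the paper's: the projective flow $\Psi(t,\theta)=X(t)\theta/\langle X(t)\theta,\mathbf{1}\rangle$, primitivity of the monodromy $X(1)$ (via Lemma~\ref{lem1}) to get convergence of the Poincar\'e iterates to $\pi$, and the integral representation of $\Lambda$ coming from \eqref{eqrho}. Your treatment is in fact somewhat more explicit than the paper's sketch---the interpolation from integer times to all $t$, the Ces\`aro splitting, and the direct evaluation $\int_0^1\langle A\theta^*,\mathbf{1}\rangle=\ln\langle X(1)\pi,\mathbf{1}\rangle=\ln\mu$---and your remark that Birkhoff contraction supplies Lyapunov stability (not just attraction) fills a point the paper leaves implicit.
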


\begin{proof}
The proof is given in Section \ref{ProofThm2}.
\end{proof}

\begin{cor}\label{Bornes}
The following inequalities are true:
$\sigma\leq \Lambda \leq \chi,$ where
\begin{equation}
\label{chi_minmax}
\sigma=\overline{\min_{1\leq i\leq n}\textstyle\sum_{j=1}^nA_{ji}},
\quad
\chi=\overline{\max_{1\leq i\leq n}\textstyle\sum_{j=1}^nA_{ji}}
\end{equation}
\end{cor}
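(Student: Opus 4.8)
The plan is to read the result directly off the integral representation of $\Lambda$ supplied by Theorem \ref{thm2}, namely $\Lambda=\int_0^1\langle A(\tau)\theta^*(\tau),{\bf 1}\rangle\,d\tau$, together with the fact that $\theta^*(\tau)$ lies in the simplex $\Delta$ for every $\tau$.

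First I would rewrite the integrand in terms of the column sums. Writing $r_i(\tau):=\sum_{j=1}^nA_{ji}(\tau)$ for the $i$-th column sum of $A(\tau)$, one has for any $\theta\in\mathbb{R}^n$
\[
\langle A(\tau)\theta,{\bf 1}\rangle=\sum_{i=1}^n\sum_{j=1}^nA_{ij}(\tau)\theta_j=\sum_{j=1}^n\Big(\sum_{i=1}^nA_{ij}(\tau)\Big)\theta_j=\sum_{j=1}^nr_j(\tau)\,\theta_j .
\]
Next, since $\theta^*(\tau)\in\Delta$, its components are non-negative and sum to $1$, so $\sum_{j=1}^nr_j(\tau)\theta_j^*(\tau)$ is a convex combination of the numbers $r_1(\tau),\dots,r_n(\tau)$. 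Hence, for every $\tau\in[0,1]$,
\[
\min_{1\leq i\leq n}r_i(\tau)\ \leq\ \langle A(\tau)\theta^*(\tau),{\bf 1}\rangle\ \leq\ \max_{1\leq i\leq n}r_i(\tau).
\]

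Finally, integrating these pointwise inequalities over $[0,1]$ and invoking the representation of $\Lambda$ from Theorem \ref{thm2} yields $\sigma=\overline{\min_i r_i}\leq\Lambda\leq\overline{\max_i r_i}=\chi$, which is exactly \eqref{chi_minmax}. There is no real obstacle: all the substance is already contained in Theorem \ref{thm2}, which produces the periodic vector $\theta^*$ and identifies $\Lambda$ with the average of $\langle A(\tau)\theta^*(\tau),{\bf 1}\rangle$; the corollary is then the elementary remark that the average of a family of convex combinations of the column sums is trapped between the average of their pointwise minimum and the average of their pointwise maximum. In the write-up I would also note that these bounds are sharp, being attained in degenerate cases such as when a single index $i$ realises $\max_j r_j(\tau)$ (or $\min_j r_j(\tau)$) for all $\tau$, which then corresponds to $\theta^*$ concentrated on that coordinate.
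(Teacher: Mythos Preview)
Your proof is correct and follows exactly the paper's own argument: bound $\langle A(\tau)\theta,{\bf 1}\rangle$ pointwise by the minimum and maximum column sums using that $\theta^*(\tau)\in\Delta$, and then integrate via the representation \eqref{Lambda=Lambda[rho]1} from Theorem~\ref{thm2}. Your closing remark on sharpness is imprecise, however: $\theta^*$ is determined by the dynamics and need not concentrate on the maximizing coordinate even when a single index realises $\max_j r_j(\tau)$ for all $\tau$, so I would drop or rephrase that sentence.
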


\begin{proof}
For all $t\geq 0$ and $\theta\in\Delta$, 
$$
\min_{1\leq i\leq n}\sum_{j=1}^nA_{ji}(t)
\leq
\langle A(t)\theta,{\bf 1}\rangle
\leq
\max_{1\leq i\leq n}\sum_{j=1}^nA_{ji}(t)
$$
and the result follows from the integral representation \eqref{Lambda=Lambda[rho]1} of $\Lambda$.
\end{proof}

Let $T>0$. If we replace the time $t$ by $t/T$ in the right-hand side of \eqref{eqA} we obtain the $T$-periodic linear system: 
\begin{equation}\label{eqAT}
\frac{dx}{dt}=A(t/T){x},
\end{equation}
The change of variables
$$t/T=\tau,\quad y(\tau)=x(T\tau),\quad Y(\tau)=X(T\tau)$$
transforms \eqref{eqA} and the corresponding matrix-valued equation \eqref{MVDE} into the equations
\begin{equation}\label{A(tau)}
  \frac{dy}{d\tau}=TA(\tau)y,\qquad \frac{dY}{d\tau}=TA(\tau)Y.  
\end{equation}
These equations are 1-periodic. Therefore, Theorem \ref{thm2} has the following corollary.

\begin{cor}\label{cor2}
Let Assumptions \ref{i} and \ref{ii} be satisfied. Let $\mu(T)$ be the Perron-Frobenius root of the monodromy matrix $X(T)$ of \eqref{eqAT}. Let $\Lambda(T):=\frac{1}{T}\ln(\mu)$. If $x(t)$ is a solution of \eqref{eqAT} such that $x(0)>0$, then $x(t)\gg0$ for all sufficiently large $t$ and, for all $i$,
$$\lim_{t\to\infty}\frac{1}{t}\ln(x_i(t))=\Lambda(T).$$
 \end{cor}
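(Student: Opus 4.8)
The plan is to derive Corollary \ref{cor2} directly from Theorem \ref{thm2} by applying it to the rescaled $1$-periodic system $\frac{dy}{d\tau}=TA(\tau)y$ from \eqref{A(tau)}, and then translating the conclusion back to the original time variable.

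First I would check that the matrix-valued function $\tau\mapsto TA(\tau)$ satisfies Assumptions $(i)$ and $(ii)$. Since $T>0$, multiplication by $T$ does not change the location of the discontinuity points $\tau_k$, nor the piecewise continuity, nor the existence of one-sided limits, so $(i)$ holds; moreover $TA(\tau)$ is still Metzler because $A(\tau)$ is and $T>0$. For $(ii)$, note that $\overline{TA}=T\,\overline{A}$, and scaling by a positive constant leaves the zero/nonzero pattern of a matrix unchanged, so $T\,\overline{A}$ is irreducible precisely because $\overline{A}$ is. Hence Theorem \ref{thm2} applies to \eqref{A(tau)}.

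The fundamental solution of $\frac{dY}{d\tau}=TA(\tau)Y$ with $Y(0)=I$ is $Y(\tau)=X(T\tau)$, so its monodromy matrix is $Y(1)=X(T)$, whose Perron--Frobenius root is $\mu(T)$ by definition. Theorem \ref{thm2} then yields: for any solution $y(\tau)$ of the rescaled system with $y(0)>0$, one has $y(\tau)\gg0$ for all $\tau$ large enough and, for every $i$, $\lim_{\tau\to\infty}\frac{1}{\tau}\ln(y_i(\tau))=\ln(\mu(T))$.

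Finally I would undo the change of variables. A solution $x(t)$ of \eqref{eqAT} with $x(0)>0$ corresponds to $y(\tau)=x(T\tau)$, i.e. $x(t)=y(t/T)$, which is eventually positive because $y$ is; and
\[
\lim_{t\to\infty}\frac{1}{t}\ln(x_i(t))=\lim_{\tau\to\infty}\frac{1}{T\tau}\ln(y_i(\tau))=\frac{1}{T}\ln(\mu(T))=\Lambda(T),
\]
which is the claim. I do not expect a genuine obstacle here: the argument is essentially bookkeeping, and the only points that need (minor) care are verifying that Assumptions $(i)$--$(ii)$ are inherited by $TA(\tau)$ and that the time rescaling passes correctly through the limit defining the growth rate.
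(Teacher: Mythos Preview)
Your proposal is correct and follows essentially the same approach as the paper: apply Theorem \ref{thm2} to the $1$-periodic system \eqref{A(tau)}, identify its monodromy matrix as $Y(1)=X(T)$, and then pass through the time rescaling $t=T\tau$ to recover the growth rate $\Lambda(T)=\frac{1}{T}\ln(\mu(T))$. The paper's proof is slightly terser in that it does not spell out the verification that $TA(\tau)$ inherits Assumptions $(i)$--$(ii)$, but your added check is harmless and the arguments are otherwise identical.
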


\begin{proof}
Using Theorem \ref{thm2}, for any solution $y(\tau)$ of \eqref{A(tau)}, such that $y(0)>0$,  $y(\tau)\gg0$ for all sufficiently  large $\tau$ and
$$\lim_{\tau\to\infty}\frac{1}{\tau}\ln(y_i(\tau))=\ln(\mu(T)),$$
where $\mu(T)$ is the Perron-Frobenius root of the monodromy matrix $Y(1)=X(T)$.  Since
$$\lim_{\tau\to\infty}\frac{1}{\tau}\ln(y_i(\tau))=
\lim_{t\to\infty}\frac{1}{t/T}\ln(x_i(t))=T\lim_{t\to\infty}\frac{1}{t}\ln(x_i(t)),$$
we deduce that for all $i$, 
$\lim_{t\to\infty}\frac{1}{t}\ln(x_i(t))=\frac{1}{T}\ln(\mu(T))=:\Lambda(T).$
\end{proof}

\subsection{Fast and slow regimes}\label{HFL}

The aim of this section is to determine the limits of the growth rate $\Lambda(T):=\frac{1}{T}\ln(\mu(T))$  for small and large $T$.

\subsubsection{Fast regime}\label{FR}
Using Assumption \ref{ii}, by the Perron-Frobenius theorem \cite[Page 673]{MeyerBook}, applied to $\overline{A}+rI$, with 
$r\geq \|\overline{A}\|_\infty$, 
the spectral abscissa of $\overline{A}$, i.e. the largest real part of its eigenvalues, is a simple eigenvalue of $\overline{A}$ and is denoted $\lambda_{max}(\overline{A})$.

\begin{theorem}\label{Prop13}
Let Assumptions \ref{i} and \ref{ii} be satisfied.  The growth rate  of 
\eqref{A(tau)} satisfies 
\begin{equation}\label{T=0}
\lim_{T\to 0}\Lambda(T)=\lambda_{max}(\overline{A}).
\end{equation}
\end{theorem}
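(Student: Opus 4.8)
The plan is to exploit the fact that $T\to 0$ is the \emph{fast oscillation / averaging} regime: over one period the monodromy matrix of the rescaled system \eqref{A(tau)} is a small perturbation of the identity whose first–order term is governed by $\overline{A}$, and the asymptotics of its Perron--Frobenius root can then be read off directly.

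\textbf{Step 1 (first–order expansion of the monodromy matrix).} I would start from the integral form of the matrix equation in \eqref{A(tau)}, namely $Y(\tau)=I+T\int_0^\tau A(s)Y(s)\,ds$. Since $\|A\|_\infty<\infty$ by Assumption $(i)$, Gronwall's inequality gives $\|Y(\tau)-I\|\le T\|A\|_\infty e^{T\|A\|_\infty}$ for $\tau\in[0,1]$; substituting this back into the integral equation yields
\[
Y(1)=I+T\int_0^1 A(s)\,ds+T\int_0^1 A(s)\big(Y(s)-I\big)\,ds=I+T\,\overline{A}+R(T),\qquad \|R(T)\|\le C\,T^2 ,
\]
for a constant $C$ depending only on $\|A\|_\infty$. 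The piecewise continuity of $A$ causes no trouble here, as only boundedness and integrability are used. Writing $Y(1)=I+T\,B(T)$ with $B(T)=\overline{A}+R(T)/T$, one has $\|B(T)-\overline{A}\|\to 0$ as $T\to 0$, and in particular the spectra of $B(T)$ stay in a fixed bounded set for $T\le 1$.

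\textbf{Step 2 (from the expansion to the Perron root).} Recall that $\mu(T)$ is the Perron--Frobenius root of $X(T)=Y(1)$, and that, as in the argument given in the text for $X(1)$ (via $Y(1)^p=Y(p)$ and Lemma \ref{lem1} applied to \eqref{A(tau)}), $Y(1)$ is nonnegative and irreducible, so $\mu(T)=\rho(Y(1))>0$ equals the spectral radius. For $T\neq 0$ the eigenvalues of $Y(1)=I+T\,B(T)$ are exactly the numbers $1+T\beta$ with $\beta$ ranging over $\mathrm{spec}(B(T))$, hence $\mu(T)=\max_{\beta\in\mathrm{spec}(B(T))}|1+T\beta|$. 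Using the elementary estimate $\ln|1+T\beta|=\tfrac12\ln\!\big(1+2T\,\mathrm{Re}\,\beta+T^2|\beta|^2\big)=T\,\mathrm{Re}\,\beta+O(T^2)$, valid uniformly for $\beta$ in a bounded set, one gets
\[
\Lambda(T)=\frac1T\ln\mu(T)=\frac1T\max_{\beta\in\mathrm{spec}(B(T))}\ln|1+T\beta|=\max_{\beta\in\mathrm{spec}(B(T))}\mathrm{Re}\,\beta+O(T).
\]
Finally, the roots of the characteristic polynomial depend continuously on the matrix, so $\max_{\beta\in\mathrm{spec}(B(T))}\mathrm{Re}\,\beta\to\max_{\lambda\in\mathrm{spec}(\overline{A})}\mathrm{Re}\,\lambda=\lambda_{max}(\overline{A})$ as $T\to0$, which together with the displayed identity gives $\lim_{T\to0}\Lambda(T)=\lambda_{max}(\overline{A})$.

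The only mildly delicate point is Step 2: a priori the Perron root need not be the eigenvalue $1+T\beta$ with the largest real part of $\beta$, since a tie in real parts could be resolved by the second–order term $T^2|\beta|^2$. The device above avoids this entirely — one works with the spectral–radius characterisation $\mu(T)=\max_\beta|1+T\beta|$ and the uniform logarithmic expansion, so that only $\max_\beta\mathrm{Re}\,\beta$, a continuous quantity, is ever needed. An equivalent alternative is to write $Y(1)=e^{T\overline{A}}+O(T^2)$ and compare $\rho(Y(1))$ with $\rho(e^{T\overline{A}})=e^{T\lambda_{max}(\overline{A})}$; this is the same computation in disguise. One could also route Step 1 through the classical averaging theorem applied to the slowly varying system \eqref{A(tau)}, but the direct Gronwall estimate is shorter and self-contained.
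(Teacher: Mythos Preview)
Your proof is correct and takes a genuinely different route from the paper's. The paper works through the simplex dynamics: it uses the integral representation $\Lambda(T)=\int_0^1\langle A(\tau)\theta^*(T\tau,T),{\bf 1}\rangle\,d\tau$ from Theorem~\ref{thm2}, then invokes the averaging theorem to show that the periodic solution $\theta^*$ on $\Delta$ converges, as $T\to0$, to the Perron vector $w$ of $\overline{A}$ (which is globally asymptotically stable for the averaged equation on $\Delta$ by irreducibility of $\overline{A}$), and concludes via $\int_0^1\langle A(\tau)w,{\bf 1}\rangle\,d\tau=\langle\overline{A}w,{\bf 1}\rangle=\lambda_{max}(\overline{A})$. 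Your argument bypasses the simplex entirely, expanding the monodromy matrix directly as $Y(1)=I+T\overline{A}+O(T^2)$ via Gronwall and reading off the Perron root from the spectral-radius characterisation; your handling of the potential tie in real parts via the uniform expansion of $\ln|1+T\beta|$ is clean and correct. The paper's route has the merit of being structurally parallel to the large-$T$ and large-$m$ proofs (Theorems~\ref{Prop14} and~\ref{Prop15}), where Tikhonov's theorem replaces averaging, so the three asymptotics are handled by a single template; your route is more elementary and self-contained, needs no dynamical input beyond Gronwall, and in fact delivers a rate $\Lambda(T)=\lambda_{max}(\overline{A})+O(T)$ once one uses that the top eigenvalue of the irreducible Metzler matrix $\overline{A}$ is simple.
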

\begin{proof}
The proof is given in Section \ref{ProofProp13}.
\end{proof}

\subsubsection{Slow regime}\label{SR}

Since $A(\tau)$ is Metzler, the Perron theorem \cite[Page 670]{MeyerBook}, applied to ${A(\tau)}+rI$ for $r\geq \|A(\tau)\|_\infty$, asserts that the spectral abscissa $\lambda_{max}(A(\tau))$ is an eigenvalue of $A(\tau)$ and it admits a non negative eigenvector $v(\tau)\in \Delta$.

\begin{lem}\label{lemma6}
Let $\lambda$ be an eigenvalue of $A(\tau)$, having a nonnegative eigenvector $v\in\Delta$. 
The eigenvector $v$ is an equilibrium point of the autonomous differential equation on the simplex $\Delta$:
\begin{equation}\label{eqsimplextau}
\frac{d\theta}{dt}=A(\tau)\theta- \langle A(\tau)\theta,{\bf 1}\rangle\theta.
\end{equation}
In particular, the eigenvector $v(\tau)\in\Delta$, corresponding to $\lambda_{max}(A(\tau))$, is an equilibrium point \eqref{eqsimplextau}.
In this equation,
$\tau\in[0,1)$ is considered as a parameter.
\end{lem}

\begin{proof}
Since  $A(\tau)v=\lambda v$, we obtain 
\begin{align*}
A(\tau)v- \langle A(\tau)v,{\bf 1}\rangle v
=\lambda v-\lambda\langle v,{\bf 1}\rangle v=0,
\end{align*}
because $\langle v,{\bf 1}\rangle=1$. Therefore $v$ is an equilibrium of 
\eqref{eqsimplextau}.
\end{proof}

As in \cite{BLSS}, we use
Tikhonov's theorem on singular perturbations to determine the limit of $\Lambda(T)$ as $T\to\infty$. 
This theorem needs that the equilibrium of the fast equation is asymptotically stable and has a basin of attraction which is uniform, see the condition (SP2) in  Appendix \ref{SingPert}. 
Since
\eqref{eqsimplextau} is the fast equation corresponding to the singularly perturbed equation arising when $T\to\infty$, we need to assume that $v(\tau)$ is 
asymptotically stable for the differential equation \eqref{eqsimplextau} and has a basin of attraction which is uniform with respect to the parameter $\tau\in[0,1)$. 
More precisely, we make the following assumption.

\begin{hyp}\label{H3}
We assume that \\
(H3.1) For each $\tau\in[0,1)$, $v(\tau)$ is an asymptotically stable equilibrium of \eqref{eqsimplextau}.\\
(H3.2) 
There exists $\delta>0$ such that, for each $\tau\in[0,1)$, the $\delta$-radius ball in $\Delta$, centred at $v(\tau)$, is included in the basin of attraction of $v(\tau)$.\\
(H3.3)
At the points $\tau_k$ of discontinuity, the basin of attraction of $v(\tau_k)$ contains 
the right limit $v(\tau_{k}-0)=\lim_{\tau\to \tau_k,\tau<\tau_k}v(\tau)$. 
\end{hyp}
\begin{rem}
Hypothesis \ref{H3} is satisfied if, for all $\tau\in[0,1)$, $v(\tau)$ is GAS in $\Delta$ for \eqref{eqsimplextau}. This is the case, in particular, when $A(\tau)$ is irreducible for all $\tau\in[0,1)$. Indeed, in this case, we have $v(\tau)\gg 0$, and it is GAS in $\Delta$, see \cite[Proposition 24]{BLSS}. However, if the matrix $A(\tau)$ is reducible,  $\lambda_{max}(A(\tau))$ is not necessarily simple (i.e. of algebraic multiplicity 1), we do not have $v(\tau)\gg0$ and there are possibly other nonnegative
eigenvectors for $A(\tau)$, corresponding to other eigenvalues. 
\end{rem}

\begin{rem}\label{RemH3}
In the piecewise constant case, Hypothesis \ref{H3} is satisfied if and only if for each interval 
$[\tau_k,\tau_{k+1})$, $v_k:=v(\tau_k)$ is asymptotically stable and its basin of attraction contains 
$v_{k-1}:=v(\tau_{k-1})$ .
\end{rem}

In Section \ref{Numerical}  we present several examples of how Hyposthesis \ref{H3} can be verified, see Section \ref{TPC} and Fig. \ref{figure10}(b), as well as cases where it is not, see Figs. \ref{figure7} and \ref{figure9}(b), where the condition H3.3 is not satisfied and Fig. \ref{figure10}(c), where the condition H3.1 is not satisfied. Indeed,
the equilibrium $v(\tau )$ is not always asymptotically stable: if $v(\tau )$ is not an isolated equilibrium, which can happen when $\lambda_{max}(A(\tau))$ is not a simple eigenvalue of $A(\tau)$, then $v(\tau )$ is not asymptotically stable. For an illustrative example, see Fig. \ref{figure10}(c).

\begin{theorem}\label{Prop14}
Let Assumptions \ref{i} and \ref{ii} and Hypothesis \ref{H3} be satisfied.  The growth rate  of 
\eqref{A(tau)} satisfies 
\begin{equation}\label{T=infini}
\lim_{T\to \infty}\Lambda(T)=\overline{\lambda_{max}(A)}
 \end{equation}
\end{theorem}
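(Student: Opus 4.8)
The plan is to treat the large-$T$ limit as a singular perturbation problem, exactly as in \cite{BLSS}, the role played there by the irreducibility of $A(\tau)$ being taken over here by Hypothesis \ref{H3}. The first step is to write down an integral formula for $\Lambda(T)$: applying Theorem \ref{thm2} to the $1$-periodic system \eqref{A(tau)}, whose matrix is $TA(\tau)$, one gets $\ln\mu(T)=\int_0^1\langle TA(\tau)\theta^*_T(\tau),{\bf 1}\rangle\,d\tau$, where $\theta^*_T$ is the globally asymptotically stable $1$-periodic solution of $\dot\theta=TF(\tau,\theta)$ issued from the Perron--Frobenius vector of $X(T)$; dividing by $T$,
\[
\Lambda(T)=\int_0^1\langle A(\tau)\theta^*_T(\tau),{\bf 1}\rangle\,d\tau .
\]
Since $A(\tau)v(\tau)=\lambda_{max}(A(\tau))v(\tau)$ and $\langle v(\tau),{\bf 1}\rangle=1$, the theorem reduces to showing that $\theta^*_T(\tau)\to v(\tau)$ as $T\to\infty$ for almost every $\tau$; the integrand is uniformly bounded (as $\|A\|_\infty<\infty$ and $\Delta$ is compact), so dominated convergence then yields $\Lambda(T)\to\int_0^1\lambda_{max}(A(\tau))\,d\tau=\overline{\lambda_{max}(A)}$.

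Second, I would set $\varepsilon=1/T$ and pass to the fast time $t=T\tau$, so that $\dot\theta=TF(\tau,\theta)$ becomes a slow--fast system whose fast subsystem, at frozen $\tau$, is precisely equation \eqref{eqsimplextau}, with slow variable $\tau$ obeying $\dot\tau=\varepsilon$. By the lemma preceding Hypothesis \ref{H3}, $v(\tau)$ is an equilibrium of \eqref{eqsimplextau}, and by Hypothesis \ref{H3} it is asymptotically stable with a basin uniform in $\tau$. Then I would apply Tikhonov's theorem (Appendix \ref{SingPert}) successively on each continuity interval $[\tau_k,\tau_{k+1})$: a solution starting in the basin is attracted to $v(\tau)$ outside an initial layer, its left limit at $\tau_{k+1}$ tends to $v(\tau_{k+1}-0)$, and the uniform-basin clause of Hypothesis \ref{H3} places $v(\tau_{k+1}-0)$ in the basin of $v(\tau_{k+1})$, so the argument restarts at $\tau_{k+1}$ (the period endpoint $\tau_0=0$ being absorbed by the $1$-periodicity of $A$). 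Hence every solution $\theta_T$ of $\dot\theta=TF(\tau,\theta)$ whose initial value lies in the uniform basin of $v(0)$ satisfies $\theta_T(\tau)\to v(\tau)$ for almost every $\tau$, uniformly on compact subsets of $\bigcup_{k=0}^N(\tau_k,\tau_{k+1})$.

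Third, I would upgrade this to $\theta^*_T$ itself, that is, show that $\pi(T)=\theta^*_T(0)$ eventually enters the uniform basin. Let $\Pi_T$ be the time-$1$ map of $\dot\theta=TF(\tau,\theta)$; by Theorem \ref{thm2} its unique fixed point $\pi(T)$ attracts all of $\Delta$. Fixing $\theta_0$ in the open uniform basin $\mathcal B$ of $v(0)$, the second step gives $\Pi_T(\theta_0)\to v(1-0)$, and, the convergence being uniform on compact subsets of $\mathcal B$, one gets $\sup_{\theta\in K}\|\Pi_T(\theta)-v(1-0)\|\to0$ for a suitable compact neighbourhood $K\subset\mathcal B$ of $v(1-0)$; thus for $T$ large $\Pi_T(K)\subset K$ with $\Pi_T(K)$ shrinking to $\{v(1-0)\}$, so that $\pi(T)=\lim_k\Pi_T^k(\theta_0)\in\Pi_T(K)$ forces $\pi(T)\to v(1-0)\in\mathcal B$. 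As $\pi(T)$ then lies in a compact subset of the uniform basin, the second step applies to $\theta^*_T$ and gives $\theta^*_T(\tau)\to v(\tau)$ for almost every $\tau$, which together with the first step proves the theorem.

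The delicate points are the second and third steps. Because $A(\tau)$ may be reducible and $\lambda_{max}(A(\tau))$ need not be simple, the ``slow manifold'' $\{(\tau,v(\tau))\}$ is not normally hyperbolic and $v(\tau)$ may lie on the boundary of $\Delta$, so geometric singular perturbation theory is unavailable; Hypothesis \ref{H3} is exactly the substitute — asymptotic stability together with a $\tau$-uniform basin — that allows Tikhonov's theorem to be run interval by interval and the pieces to be spliced across the discontinuities of $A$, and it is also what makes the Poincar\'e-map argument of the third step go through. The remaining ingredients — the integral formula, the reductions, and the dominated-convergence conclusion — are routine.
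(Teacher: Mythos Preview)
Your proposal is correct and follows essentially the same route as the paper: the integral representation of $\Lambda(T)$ from Theorem \ref{thm2}, the rewriting $\varepsilon\,d\eta/d\tau=F(\tau,\eta)$ with $\varepsilon=1/T$, and the piecewise application of Tikhonov's theorem (Proposition \ref{ThmThik}, Remark \ref{ThmThik1}) under Hypothesis \ref{H3} to get $\theta^*_T(\tau)\to v(\tau)$ off a set of arbitrarily small measure. The paper's own proof is terser---it asserts directly that the periodic solution $\eta^*(\tau,T)$ is approximated by $v(\tau)$ and refers the reader to \cite[Section 5.4]{BLSS} for the details---whereas your third step (the Poincar\'e-map argument showing $\pi(T)$ eventually lies in the uniform basin of $v(0)$) makes explicit the one point the paper leaves to the reference, namely why Tikhonov's conclusion applies to the \emph{periodic} solution rather than just to solutions with a prescribed initial condition in the basin.
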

\begin{proof}
The proof is given in Section \ref{ProofProp14}.
\end{proof}

\subsection{Low and high migration rate}\label{FM}
We now consider the special case of \eqref{eq3}, where  $A(\tau)=R(\tau)+mL(\tau)$. Note that if Hypotheses  \ref{H1} and \ref{H2} are satisfied,  then Assumptions  \ref{i} and \ref{ii} on the system \eqref{eqA} are satisfied. Thus, Theorem \ref{thm2} asserts that the system \eqref{eq3} has a growth rate given by
$$\Lambda(m,T)=\frac{1}{T}\ln(\mu(m,T)),$$
where $\mu(m,T)$ is the Perron-Frobenius root of the monodromy matrix $X(m,T)$ of \eqref{eq3}. The aim of this section is to determine the limits of $\Lambda(m,T)$  for small and large $m$. 

\subsubsection{Low migration rate}

\begin{propo}\label{Prop12m=0}
Assume that Hypotheses \ref{H1} and \ref{H2} are satisfied. For all $T>0$, the growth rate $\Lambda(m,T)$ of 
\eqref{eq3} satisfies   
\begin{equation}\label{m=0}
\Lambda(0,T):=\lim_{m\to 0}\Lambda(m,T)=\max_i\overline{r}_i.
\end{equation}
\end{propo}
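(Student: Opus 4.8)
The plan is to compute the limit $\lim_{m\to0}\Lambda(m,T)$ by first passing to the limit in the monodromy matrix and then applying Perron–Frobenius together with a perturbation argument. When $m=0$ the system \eqref{eq3} decouples: $A(\tau)=R(\tau)$ is diagonal, so the fundamental matrix solution of $\dot y = TR(\tau)y$ at $\tau=1$ is the diagonal matrix $X(0,T)=\mathrm{diag}\bigl(e^{T\overline{r}_1},\dots,e^{T\overline{r}_n}\bigr)$, using $\int_0^1 r_i(T\tau)\,d\tau\cdot T = T\overline r_i$ after the time rescaling (or, directly on \eqref{eq3}, $X(0,T)=\mathrm{diag}(e^{\int_0^Tr_i(s/T)ds})=\mathrm{diag}(e^{T\overline r_i})$). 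Hence the Perron–Frobenius root of $X(0,T)$ is $\max_i e^{T\overline r_i}=e^{T\max_i\overline r_i}$, which gives the candidate value $\frac1T\ln\mu(0,T)=\max_i\overline r_i$.

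The substance of the proof is the continuity of the Perron root as $m\to0$. First I would argue that $m\mapsto X(m,T)$ is continuous (indeed real-analytic) on $[0,\infty)$: this follows from continuous dependence of solutions of the linear ODE \eqref{MVDE} on the parameter $m$, uniformly on the compact interval $[0,T]$, which is standard under Assumption $(i)$ (the right-hand side is piecewise continuous in $t$ and affine, hence Lipschitz, in $x$ and $m$). Consequently the eigenvalues of $X(m,T)$ vary continuously with $m$. For $m>0$, Hypothesis \ref{H2} guarantees via Lemma \ref{lem1} that $X(m,T)$ is irreducible and nonnegative, so by Perron–Frobenius its spectral radius $\mu(m,T)$ is a simple positive eigenvalue; and $\mu(m,T)$ equals the spectral radius of $X(m,T)$, which is a continuous function of the matrix entries on all of $[0,\infty)$ (the spectral radius is always continuous, even without irreducibility). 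Therefore $\lim_{m\to0}\mu(m,T)=\rho\bigl(X(0,T)\bigr)=e^{T\max_i\overline r_i}$, and taking $\frac1T\ln(\cdot)$ yields \eqref{m=0}.

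The main obstacle — and the point that needs care — is that at $m=0$ the matrix $X(0,T)$ is diagonal, hence generally reducible, so Perron–Frobenius simplicity and the clean characterization of $\Lambda$ via Theorem \ref{thm2} are available only for $m>0$; one must not try to take the limit inside the variational formula \eqref{Lambda=Lambda[rho]1} naively, since the periodic solution $\theta^*$ and its basin may degenerate as $m\to0$. The robust route is the one above: work directly with the spectral radius of the monodromy matrix, which is continuous in $m$ without any irreducibility hypothesis, identify its value at $m=0$ by explicit integration of the decoupled system, and invoke the identification $\Lambda(m,T)=\frac1T\ln\mu(m,T)$ from Corollary \ref{cor2}/Theorem \ref{thm2} for $m>0$. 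An alternative, if one prefers a proof that stays within the $m>0$ regime, is to sandwich $\Lambda(m,T)$: the lower bound $\Lambda(m,T)\ge\max_i\overline r_i+o(1)$ comes from comparison with the uncoupled subsystem on any single patch (a supersolution/subsolution argument, since adding migration only redistributes mass), while the upper bound follows from Corollary \ref{Bornes} together with the observation that $\chi=\overline{\max_i(r_i+m\ell_{ii})}\to\overline{\max_i r_i}$ is too weak — so one instead uses that the Perron eigenvalue is bounded above by $\max_i$ of the column sums of the logarithm of $X(m,T)$, which converge to $T\overline r_i$. I would present the spectral-radius-continuity argument as the clean proof and relegate the sandwiching remark to at most a sentence.
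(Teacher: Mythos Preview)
Your proposal is correct and follows essentially the same route the paper indicates: continuous dependence of the solutions of \eqref{eq3} on the parameter $m$ gives continuity of the monodromy matrix $X(m,T)$, hence of its spectral radius, and at $m=0$ the decoupled system yields the diagonal monodromy with spectral radius $e^{T\max_i\overline r_i}$. The paper's own proof is a one-line reference to \cite[Section 5.5]{BLSS}, stating precisely that only continuous dependence on $m$ is needed; you have supplied the details that the paper outsources.
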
 
\begin{proof}
The proof is the same as the proof of \cite[Eq. (14)]{BLSS}. Indeed, the proof in \cite{BLSS} only uses  the continuous dependence of the solutions of \eqref{eq3} on the parameter $m$. For the details, we refer the reader to  \cite[Section 5.5]{BLSS}. 
\end{proof}  

\subsubsection{High migration rate} 

We use the following result.
\begin{propo}\label{Lreducible}
Let $L$ be a matrix whose columns sum to 0. Then $0$ is an eigenvalue of $L$.
Assume that $L$ is Metzler (the off diagonal elements of $L$ are nonnegative). Let $k\geq 1$ be the algebraic multiplicity of its  eigenvalue 0. The following properties are true
\begin{enumerate}
\item All other eigenvalues of $L$ are of negative real part. 
\item The geometric multiplicity of 0 is equal to $k$, i.e. $L$ admits $k$ linearly independent eigenvectors.
\item The  $k$ linearly independent eigenvectors of $L$ can be chosen in the simplex $\Delta$.
\end{enumerate}
If $L$ is irreducible then $k=1$.
\end{propo}
\begin{proof}
The proof is given in Section \ref{ProofLreducible}.
\end{proof}

Since the columns of $L(\tau)$ sum to 0, 0 is an eigenvalue of $L(\tau)$ and it admits a non negative eigenvector $p(\tau)\in \Delta$. 
Since
$\langle L(\tau)\theta,{\bf 1}\rangle=0$,
the differential equation \eqref{eqsimplextau} on the simplex $\Delta$, corresponding to the linear equation $\frac{dx}{dt}=L(\tau)x$, is written
\begin{equation}\label{eqsimplextauL}
\frac{d\theta}{dt}=L(\tau)\theta,
\end{equation}
where $\tau\in[0,1)$ is considered as a parameter.  Note that $p(\tau)$ is an equilibrium point of the differential equation \eqref{eqsimplextauL}, because $L(\tau)p(\tau)=0$ and  $p(\tau)\in \Delta$.

As for the limit of $\Lambda(m,T)$ as $T\to\infty$, we use
Tikhonov's theorem on singular perturbations to determine the limit of $\Lambda(m,T)$ as $m\to\infty$. 
Since
\eqref{eqsimplextauL} is the fast equation corresponding to the singularly perturbed equation arising when $m\to\infty$, we need to assume that for each $\tau\in[0,1)$, $p(\tau)$ is 
asymptotically stable for the differential equation \eqref{eqsimplextauL} and has a basin of attraction which is uniform with respect to the parameter $\tau\in[0,1)$, see the condition (SP2) in  Appendix \ref{SingPert}. What we mean by the uniformity of the basin of attraction of $p(\tau)$ is the same  Hypothesis \ref{H3}, where $v(\tau)$ should be replaced by $p(\tau)$.  
Unlike \eqref{eqsimplextau}, where the equation can have other equilibria on the $\Delta$ simplex, corresponding to other eigenvalues of $A(\tau)$, as shown in Lemma~\ref{lemma6}, the novelty with \eqref{eqsimplextauL} is that there are no other equilibria on $\Delta$ apart from the eigenvectors corresponding to eigenvalue 0.  
 Indeed, \eqref{eqsimplextauL} is a compartmental system \cite{Jacquez} and its properties are well known. In addition to the properties given in Proposition \ref{Lreducible} we have the follwoing result.

\begin{lem} If $\lambda$ is an eigenvalue of $L(\tau)$ and $v$ a corresponding eigenvector then $\lambda=0$ or $\langle v,{\bf 1}\rangle=0$. Therefore, there is no eigenvector on $\Delta$ apart from eigenvectors corresponding to eigenvalue 0.  
 \end{lem}
 \begin{proof}
  Since the columns of $L(\tau)$ sum to 0, we have
$\langle L(\tau)v,{\bf 1}\rangle=0$. From
$L(\tau)v=\lambda v$ we deduce 
$\lambda\langle v,{\bf 1}\rangle=0$. Therefore, 
$\lambda=0$ or $\langle v,{\bf 1}\rangle=0$.
 \end{proof}
 
 This particularity of the system \eqref{eqsimplextauL} simplifies the hypotheses needed on $p(\tau)$. Indeed, we see that Hypothesis \ref{H3}, where $v(\tau)$ is replaced by $p(\tau)$ and equation \eqref{eqsimplextau} is replaced by \eqref{eqsimplextauL}, is satisfied, if and only if $L(\tau)$ satisfies the following hypothesis.

\begin{hyp}\label{H4}
We assume that for each $\tau\in[0,1)$, 0 is a simple eigenvalue of $L(\tau)$.
\end{hyp}

Indeed, we have the following result.
\begin{lem}\label{lemma10}
If Hypothesis \ref{H4} is satisfied then $p(\tau)$ is GAS in $\Delta$ for  \eqref{eqsimplextauL}.
If Hypothesis \ref{H4} is not satisfied then $p(\tau)$ is not asymptotically stable for  \eqref{eqsimplextauL}.
\end{lem}

\begin{proof}
The system \eqref{eqsimplextauL}, considered as a system on $\mathbb{R}^n$, not only on the simplex $\Delta$, is a singular linear compartmental system, with constant coefficients, and no input. Therefore, it falls under case (b) of \cite[Eq. (14)]{Jacquez} and the trajectories of \eqref{eqsimplextauL} lie on lower dimensional hyperplanes $\sum_{i=1}^n\eta_i={\rm constant}$ of $\mathbb{R}_+^n$ and tend exponentially to the zero set of $L$ on each of these hyperplanes. Thus, if $0$ is a simple eigenvalue of $L(\tau)$, then $p(\tau)$ is GAS in $\Delta$ for \eqref{eqsimplextauL}, so that all conditions in  
Hypothesis \ref{H3} are satisfied.

Conversely, if 0 is an eigenvalue of $L(\tau)$ of multiplicity $k$, using Proposition \ref{Lreducible}, there are always $k$ independent eigenvectors in the simplex $\Delta$ for the eigenvalue 0. Therefore, if $k>1$, the system \eqref{eqsimplextauL} has a set of non isolated equilibria and  $p(\tau)$ cannot be asymptotically stable. 
\end{proof}

\begin{theorem}\label{Prop15}
Assume that Hypotheses \ref{H1}, \ref{H2} and \ref{H4} are satisfied.  Then, for all $T>0$ the growth rate $\Lambda(m,T)$ of 
\eqref{eq3} satisfies 
\begin{equation}\label{m=infini}
\Lambda(\infty,T):=\lim_{m\to \infty}\Lambda(m,T)=\sum_{i=1}^n\overline{p_ir_i}.
\end{equation} 
\end{theorem}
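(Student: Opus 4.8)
The plan is to combine the integral representation of the growth rate furnished by Theorem \ref{thm2} with a Tikhonov-type singular perturbation analysis in which $\varepsilon=1/m$ is the small parameter and the fast subsystem is exactly the equation \eqref{eqsimplextauL}, for which Hypothesis \ref{H4} supplies the needed attractivity. First I would rewrite the problem in terms of the angular variable. After the rescaling $\tau=t/T$, the system \eqref{eq3} with $A(\tau)=R(\tau)+mL(\tau)$ becomes the $1$-periodic system $dy/d\tau=T(R(\tau)+mL(\tau))y$, to which Theorem \ref{thm2} applies since $\overline L$ is irreducible by Hypothesis \ref{H2}. Writing $\theta=y/\langle y,{\bf 1}\rangle$ and using $\langle L(\tau)\theta,{\bf 1}\rangle=0$ (the columns of $L$ sum to zero), the $\theta$-equation is
\begin{equation}\label{thetam}
\frac{d\theta}{d\tau}=T\Big(R(\tau)\theta+mL(\tau)\theta-\langle R(\tau)\theta,{\bf 1}\rangle\,\theta\Big),
\end{equation}
and Theorem \ref{thm2} together with \eqref{Lambda=Lambda[rho]1} gives
\begin{equation}\label{Lform}
\Lambda(m,T)=\int_0^1\langle R(\tau)\theta^*_m(\tau),{\bf 1}\rangle\,d\tau=\int_0^1\sum_{i=1}^n r_i(\tau)\,\theta^*_{m,i}(\tau)\,d\tau,
\end{equation}
where $\theta^*_m$ is the unique $1$-periodic solution of \eqref{thetam}, which is moreover globally asymptotically stable on $\Delta$. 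It therefore suffices to prove that $\theta^*_m(\tau)\to p(\tau)$ for a.e.\ $\tau\in[0,1]$ as $m\to\infty$: since $\Delta$ is compact and the $r_i$ are bounded, dominated convergence then turns \eqref{Lform} into $\Lambda(m,T)\to\int_0^1\sum_i r_i(\tau)p_i(\tau)\,d\tau=\sum_i\overline{p_ir_i}$, which is \eqref{m=infini} (and is automatically independent of $T$).

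Next I would put \eqref{thetam} in standard slow--fast form by setting $\varepsilon=1/m$ and multiplying by $\varepsilon$:
\begin{equation}\label{slowfast}
\varepsilon\,\frac{d\theta}{d\tau}=T L(\tau)\theta+\varepsilon\,T\big(R(\tau)\theta-\langle R(\tau)\theta,{\bf 1}\rangle\theta\big),
\end{equation}
with $\tau$ itself playing the role of the slow variable. At $\varepsilon=0$ the boundary-layer equation, for frozen $\tau$, is $d\theta/ds=TL(\tau)\theta$, which is \eqref{eqsimplextauL} up to a harmless time rescaling by $T$; by Hypothesis \ref{H4} its equilibrium $p(\tau)$ is asymptotically stable with a basin of attraction uniform in $\tau$, and the reduced slow dynamics on this attracting branch is simply $\theta\equiv p(\tau)$. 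Tikhonov's theorem in the form recalled in Appendix \ref{SingPert}, which is designed precisely to accommodate the finitely many discontinuities $\tau_1,\dots,\tau_N$ of $R$ and $L$ (the relevant point being the clause in Hypothesis \ref{H4} that $p(\tau_k-0)$ lies in the basin of attraction of $p(\tau_k)$), then yields: for any initial condition in the uniform basin, the solution of \eqref{slowfast} converges as $\varepsilon\to0$ to $p(\tau)$, uniformly on every interval $[\tau_k+\delta,\tau_{k+1}]$, $\delta>0$ — in particular, uniformly on compact subsets of $[0,1]\setminus\{0,\tau_1,\dots,\tau_N\}$.

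Finally I would upgrade this to the convergence of the \emph{periodic} solution $\theta^*_m$. Let $P_m:\Delta\to\Delta$ be the time-$1$ (Poincaré) map of \eqref{thetam}, whose unique fixed point is $\theta^*_m(0)$ and is globally attracting by Theorem \ref{thm2}. Applying the previous step on the whole interval $[0,1]$ (with $\tau=1\equiv0$ and $p(1)=p(0)$) gives $P_m(\theta_0)\to p(0)$ for every $\theta_0$ in the uniform basin, uniformly on compact subsets of that basin; iterating $P_m$ a bounded number of times and using that small neighbourhoods of $p(0)$ are eventually contained in the uniform basin, one deduces $\theta^*_m(0)\to p(0)$. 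Since $p(0)$ is the fast-equation equilibrium at $\tau=0$, running Tikhonov from the initial datum $\theta^*_m(0)$ produces no initial boundary layer (or one controlled by Hypothesis \ref{H4} if $L$ is discontinuous at $0$), so $\theta^*_m(\tau)\to p(\tau)$ uniformly on each $[\tau_k+\delta,\tau_{k+1}]$, hence for a.e.\ $\tau$. Substituting into \eqref{Lform} and invoking dominated convergence concludes the proof of \eqref{m=infini}.

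I expect the main obstacle to be this last step, namely transferring the Tikhonov conclusion from arbitrary solutions to the periodic solution: one must show that the Poincaré fixed point eventually enters every neighbourhood of $p(0)$, and that, once the initial data is $O(\varepsilon)$-close to the slow manifold, the approximation $\theta^*_m\approx p$ holds up to (not merely after) the discontinuity points, with boundary layers that re-form at each $\tau_k$ but decay thanks to the uniformity built into Hypothesis \ref{H4}. The remaining ingredients — the slow--fast reformulation \eqref{slowfast}, the identification of the fast subsystem with \eqref{eqsimplextauL}, and the dominated-convergence passage to the limit in \eqref{Lform} — are routine once the hypotheses of the Appendix \ref{SingPert} version of Tikhonov's theorem have been verified.
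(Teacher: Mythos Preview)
Your proposal is correct and follows essentially the same route as the paper: rescale to a $1$-periodic problem, use the integral representation \eqref{Lambda=Lambda[rho]1} of $\Lambda(m,T)$ with the observation $\langle L(\tau)\theta,{\bf 1}\rangle=0$, recast the $\theta$-equation in slow--fast form with $\varepsilon=1/m$, apply the Tikhonov-type result of Appendix~\ref{SingPert} (via Hypothesis~\ref{H4}) to get $\theta^*_m\to p$ away from the discontinuity points, and pass to the limit in the integral. The paper's proof is terser and defers the step you flag as ``the main obstacle'' (why the \emph{periodic} solution, not just an arbitrary solution, tracks $p(\tau)$) to the reference~\cite{BLSS}, whereas you outline a Poincar\'e-map argument for it; this is a reasonable way to fill in that detail and does not constitute a different approach.
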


\begin{proof}
The proof is given in Section \ref{ProofProp15}.
\end{proof}

\subsection{Double limits}
In \cite{BLSS,Katriel} the main tool to study the DIG phenomenon is the computation of the double limit
\begin{equation*}
\lim_{m\to 0}\lim_{T\to \infty}\Lambda(m,T)=\chi,\mbox{ where }\chi:=\int_0^1\max_{1\leq i\leq n}r_i(t)dt.
 \end{equation*}
Let us prove that this formula is also true in the more general context of this paper. According to Theorem \ref{Prop14}, when Hypotheses \ref{H1}, \ref{H2}, and \ref{H3} are satisfied  
\begin{equation}\label{mT=infini}
\Lambda(m,\infty):=\lim_{T\to \infty}\Lambda(m,T)=\overline{\lambda_{max}\left(R+mL\right)}.
\end{equation}
\begin{propo}\label{Prop14n}
Assume that Hypotheses \ref{H1}, \ref{H2},  and \ref{H3}  are satisfied. Then,
\begin{equation}\label{m=0T=infini}
\Lambda(0,\infty):=\lim_{m\to 0}\Lambda(m,\infty)=\chi:=\overline{\max_{1\leq i\leq n}r_i}.
\end{equation}
\end{propo}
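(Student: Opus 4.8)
The plan is to let $m\to 0$ directly inside the integral representation of $\Lambda(m,\infty)$. Since Hypotheses \ref{H1}, \ref{H2} and \ref{H3} hold, formula \eqref{mT=infini} is available and gives $\Lambda(m,\infty)=\int_0^1\lambda_{max}\!\left(R(\tau)+mL(\tau)\right)d\tau$. Writing $g_m(\tau):=\lambda_{max}(R(\tau)+mL(\tau))$, the conclusion will follow from the dominated convergence theorem on the finite measure space $[0,1]$, provided (a) $g_m(\tau)\to\max_{1\le i\le n}r_i(\tau)$ as $m\to 0$ for every $\tau$, and (b) the family $(g_m)_{m\ge 0}$ is bounded by a fixed integrable function.

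For point (a), I would invoke the continuity of the spectral abscissa as a function of the matrix entries: the roots of the characteristic polynomial depend continuously on its coefficients, and taking the largest real part preserves this continuity. Thus, for fixed $\tau$, $m\mapsto\lambda_{max}(R(\tau)+mL(\tau))$ is continuous, so $g_m(\tau)\to\lambda_{max}(R(\tau))$; and since $R(\tau)={\rm diag}(r_1(\tau),\dots,r_n(\tau))$ is diagonal, $\lambda_{max}(R(\tau))=\max_{1\le i\le n}r_i(\tau)$. For point (b), I would note that for each $\tau$ and each $m\ge 0$ the Metzler matrix $M:=R(\tau)+mL(\tau)$ has all its column sums equal to $r_i(\tau)$, because each column of the dispersal matrix $L(\tau)$ sums to $0$ by definition of $\ell_{ii}$; pairing the Perron relation $Mv=\lambda_{max}(M)v$ (with $v\in\Delta$ a nonnegative eigenvector) against ${\bf 1}$ expresses $\lambda_{max}(M)$ as the convex combination $\sum_i r_i(\tau)v_i$, whence $\min_{1\le i\le n}r_i(\tau)\le\lambda_{max}(M)\le\max_{1\le i\le n}r_i(\tau)$. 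This is exactly the bound of Corollary \ref{Bornes} applied to the autonomous system $\dot x=Mx$, and it shows $|g_m(\tau)|\le\max_{1\le i\le n}|r_i(\tau)|$ for all $m$; the right-hand side is bounded on $[0,1]$ by Assumption $(i)$, hence integrable.

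With (a) and (b) in hand, dominated convergence yields $\lim_{m\to 0}\Lambda(m,\infty)=\int_0^1\max_{1\le i\le n}r_i(\tau)\,d\tau=\chi$. I do not expect a serious obstacle here: the hard asymptotics ($T\to\infty$) have already been absorbed into Theorem \ref{Prop14}, so no averaging or singular-perturbation machinery is needed; the only step requiring care is the interchange of the limit $m\to 0$ with the integral, and the uniform bound of (b) settles it immediately.
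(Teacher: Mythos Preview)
Your argument is correct and coincides with the paper's approach: the paper simply invokes the continuity of the spectral abscissa (referring to \cite[Section 5.7]{BLSS}) to pass to the limit under the integral representation \eqref{mT=infini}, which is exactly your step (a), and your step (b) is the pointwise instance of Corollary~\ref{Bornes} that justifies dominated convergence. You have merely made explicit the domination step that the paper leaves to the reference.
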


\begin{proof}
The proof is the same as the proof of \cite[Eq. (17)]{BLSS}. Indeed, the proof in \cite{BLSS} for $\lim_{m\to0}\Lambda(m,\infty)$ only uses  the continuity of the spectral abscissa. For the details, we refer the reader to  \cite[Section 5.7]{BLSS}.
\end{proof}

According to Theorem \ref{Prop13}, when Hypotheses \ref{H1} and \ref{H2} are satisfied,
\begin{equation}\label{mT=0}
\Lambda(m,0):=\lim_{T\to 0}\Lambda(m,T)=\lambda_{max}\left(\overline{R+mL}\right).
\end{equation} 
We determine now the limits of 
$\Lambda(m,0)$ as $m$ tends to 0 or infinity.  

\begin{propo}\label{Prop13n}
Assume that Hypotheses \ref{H1} and \ref{H2}  are satisfied. Then,
\begin{equation}\label{m=0T=0}
\Lambda(0,0):=\lim_{m\to 0}\Lambda(m,0)=\max_i\overline{r}_i,
\end{equation}
and
\begin{equation}\label{m=infiniT=0}
\Lambda(\infty,0):=\lim_{m\to\infty}\Lambda(m,0)=\sum_{i=1}^nq_i\overline{r}_i,
\end{equation}
where $q\gg0$ in the Perron vector of $\overline{L}$, i.e. $q\in\Delta$
and $\overline{L}q=0$. Moreover, if $\overline{r}_i=\overline{r}$, for all $i$,  $\Lambda(m,0)=\overline{r}$ for all $m>0$, and, if the $\overline{r}_i$ are not equal, 
\begin{equation}\label{Lambda(m,0)convex}
\frac{d}{dm}\Lambda(m,0)< 0,\quad \frac{d^2}{dm^2}\Lambda(m,0)> 0.
\end{equation}
\end{propo}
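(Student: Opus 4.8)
The plan is to reduce the whole statement to the single scalar function
$f(m):=\Lambda(m,0)=\lambda_{max}\big(\overline{R+mL}\big)=\lambda_{max}(\overline R+m\overline L)$, using linearity of the average. Here $\overline R=\mathrm{diag}(\overline r_1,\dots,\overline r_n)$ and $\overline L$ is an irreducible Metzler matrix (Hypothesis~\ref{H2}) with zero column sums, so $\mathbf 1^\top\overline L=0$; consequently the spectral abscissa of $\overline L$ equals $0$, this eigenvalue is simple, and its Perron eigenvector is the vector $q\gg0$, $q\in\Delta$, $\overline Lq=0$ of the statement. For every $m>0$ the matrix $\overline R+m\overline L$ is irreducible Metzler, hence by Perron--Frobenius $f(m)$ is a simple eigenvalue, real-analytic in $m$ on $(0,\infty)$, with strictly positive right and left eigenvectors $u(m),w(m)$; I will normalize $\langle w(m),u(m)\rangle=1$, so that $f'(m)=\langle w(m),\overline L\,u(m)\rangle$.

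The two boundary limits and the degenerate case are short. As $m\to0$, continuity of $\lambda_{max}$ gives $f(0^+)=\lambda_{max}(\overline R)=\max_i\overline r_i$, which is \eqref{m=0T=0}. For $m\to\infty$ I would write $f(m)=m\,\phi(1/m)$ with $\phi(\varepsilon):=\lambda_{max}(\overline L+\varepsilon\overline R)$; since $0$ is a simple eigenvalue of $\overline L$ with right and left eigenvectors $q$ and $\mathbf 1$, first-order eigenvalue perturbation yields $\phi(\varepsilon)=\varepsilon\langle\mathbf 1,\overline Rq\rangle/\langle\mathbf 1,q\rangle+O(\varepsilon^2)=\varepsilon\sum_i q_i\overline r_i+O(\varepsilon^2)$, hence $f(m)\to\sum_i q_i\overline r_i$, which is \eqref{m=infiniT=0}. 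If all $\overline r_i=\overline r$, then $\overline R+m\overline L=\overline rI+m\overline L$ has eigenvalues $\overline r+m\nu$ with $\nu\in\mathrm{spec}(\overline L)$, so $f(m)\equiv\overline r+m\cdot0=\overline r$.

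For the strict inequalities, suppose henceforth that the $\overline r_i$ are not all equal. I would use two ingredients. First, \emph{convexity}: because $\overline R$ is diagonal, $\varepsilon\mapsto\phi(\varepsilon)$ is convex (a classical convexity theorem of Cohen for the spectral abscissa of a Metzler matrix under perturbation of its diagonal entries), and differentiating $f(m)=m\,\phi(1/m)$ gives $f''(m)=m^{-3}\phi''(1/m)\ge0$, so $f$ is convex on $(0,\infty)$. Second, a \emph{strict upper bound}: if $f(m_0)=M:=\max_i\overline r_i$ for some $m_0>0$, then the Perron eigenvector $u\gg0$ of $\overline R+m_0\overline L$ satisfies $m_0\overline L u=(MI-\overline R)u\ge0$ componentwise; applying $\mathbf 1^\top$ and using $\mathbf 1^\top\overline L=0$ forces $\overline L u=0$, so $u\in\mathbb{R}q$, and then $(M-\overline r_i)u_i=0$ for all $i$ gives $\overline r_i\equiv M$, a contradiction; hence $f(m)<M$ for all $m>0$. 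Combining these: convexity together with the finite limit $f(+\infty)=\sum q_i\overline r_i$ forces $f'\le0$ on $(0,\infty)$ (a positive slope somewhere would make $f\to+\infty$), and if $f'(m_1)=0$ then $f'\equiv0$ on $(0,m_1]$ by monotonicity of $f'$, whence $f\equiv M$ there, contradicting the upper bound; so $f'<0$ throughout. It then remains only to improve $f''\ge0$ to $f''>0$, i.e.\ (via $f''(m)=m^{-3}\phi''(1/m)$) to prove $\phi''(\varepsilon)>0$ for all $\varepsilon>0$.

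This last point is, I expect, the real obstacle. Second-order perturbation theory gives $\phi''(\varepsilon)=2\,w_\varepsilon^\top\overline R\,S_\varepsilon\,\overline R\,u_\varepsilon$, where $S_\varepsilon$ is the group inverse of $\phi(\varepsilon)I-\overline L-\varepsilon\overline R$; because $\phi(\varepsilon)$ is the simple dominant eigenvalue one has $S_\varepsilon=\int_0^\infty\!\big(e^{-\phi(\varepsilon)t}e^{(\overline L+\varepsilon\overline R)t}-P_\varepsilon\big)dt$ with $P_\varepsilon$ the Perron projection, and using $P_\varepsilon\overline Ru_\varepsilon=\phi'(\varepsilon)u_\varepsilon$ and $w_\varepsilon^\top\overline RP_\varepsilon=\phi'(\varepsilon)w_\varepsilon^\top$ one reduces this to $\phi''(\varepsilon)=2\int_0^\infty e^{-\phi(\varepsilon)t}\,h^\top e^{(\overline L+\varepsilon\overline R)t}g\,dt$ with $g:=\overline Ru_\varepsilon-\phi'(\varepsilon)u_\varepsilon$ and $h^\top:=w_\varepsilon^\top\overline R-\phi'(\varepsilon)w_\varepsilon^\top$. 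This integral is $\ge0$ by convexity; the point is its strict positivity when $g\ne0$ --- and $g=0$ is exactly the case where $u_\varepsilon\gg0$ is an eigenvector of the diagonal matrix $\overline R$, forcing all $\overline r_i$ equal. In the symmetric (reversible) case $w_\varepsilon=u_\varepsilon$, $h=g$, and the integrand equals $e^{-\phi t}g^\top e^{(\overline L+\varepsilon\overline R)t}g$, which is positive by a Cauchy--Schwarz-type estimate; in general one must show, using only the Perron simplicity, that $t\mapsto h^\top e^{(\overline L+\varepsilon\overline R)t}g$ cannot be identically zero unless $g=0$ --- equivalently, invoke the strict convexity of the scaled cumulant generating function $\varepsilon\mapsto\lambda_{max}(\overline L^\top+\varepsilon\overline R)$ of the additive functional $\overline r$ of the irreducible Markov generator $\overline L^\top$, whose second derivative is the asymptotic variance of that functional and is strictly positive precisely when $\overline r$ is non-constant. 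Granting $\phi''>0$, the bound $f''(m)=m^{-3}\phi''(1/m)>0$ finishes the proof.
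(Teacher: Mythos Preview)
The paper does not give a self-contained proof here: it simply observes that the argument of \cite[Eq.~(16) and Eq.~(18)]{BLSS} goes through verbatim because the only structural input used there is the irreducibility of $\overline L$, which is precisely Hypothesis~\ref{H2}. Your route --- reduce everything to the analytic function $f(m)=\lambda_{\max}(\overline R+m\overline L)$, get the two limits by continuity and by a first-order expansion of $\phi(\varepsilon)=\lambda_{\max}(\overline L+\varepsilon\overline R)$, and attack \eqref{Lambda(m,0)convex} via Cohen's convexity theorem for diagonal perturbations together with the perspective identity $f(m)=m\,\phi(1/m)$ --- is a reasonable and essentially standard line, in the same spirit as \cite{BLSS}. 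The limits \eqref{m=0T=0}, \eqref{m=infiniT=0}, the degenerate case, and your strict bound $f(m)<M$ are all correct as written.

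There is, however, a genuine slip in your monotonicity argument. From convexity and the finite limit you correctly get $f'\le0$; but you then write ``if $f'(m_1)=0$ then $f'\equiv0$ on $(0,m_1]$ by monotonicity of $f'$, whence $f\equiv M$ there''. Monotonicity of $f'$ gives the opposite inclusion: $f'\equiv0$ on $[m_1,\infty)$, not on $(0,m_1]$, so $f$ is constant equal to $\sum_iq_i\overline r_i$ (not $M$) on that half-line. The easiest repair uses the real-analyticity you already invoked: $f$ constant on a half-line forces $f$ constant on all of $(0,\infty)$, contradicting $f(0^+)=M>\sum_iq_i\overline r_i$. Alternatively --- and more to the point --- once you have the \emph{strict} inequality $\phi''>0$, which you yourself flag as the real obstacle and only sketch via the asymptotic-variance/Donsker--Varadhan interpretation, the identity $f''(m)=m^{-3}\phi''(1/m)>0$ makes $f'$ strictly increasing, and then $f'\le0$ immediately gives $f'<0$; the intermediate argument is unnecessary. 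So the logical order of your proof is slightly inverted, and the strict-convexity step, left as a sketch, is in fact carrying the full weight of \eqref{Lambda(m,0)convex}.
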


\begin{proof}
For \eqref{m=0T=0} and \eqref{m=infiniT=0}, the proof is the same as the proof of \cite[Eq. (16)]{BLSS}. Indeed, the proof in \cite{BLSS} for $\lim_{m\to0}\Lambda(m,0)$ only uses  the continuity of the spectral abscissa, and the proof in \cite{BLSS} for $\lim_{m\to\infty}\Lambda(m,0)$ only use the fact that $\overline{L}$ is irreducible.  Moreover, for \eqref{Lambda(m,0)convex}, the proof is the same as the proof of \cite[Eq. (18)]{BLSS}. Indeed, the proof in \cite{BLSS} for the first and second derivatives of $\Lambda(m,0)$ only uses  the fact that $\overline{L}$ is irreducible. For the details, we refer the reader to  \cite[Section 5.7]{BLSS}. 
\end{proof}

\begin{figure}[ht]
\begin{center}
\includegraphics[width=10cm,
viewport=160 490 440 700]{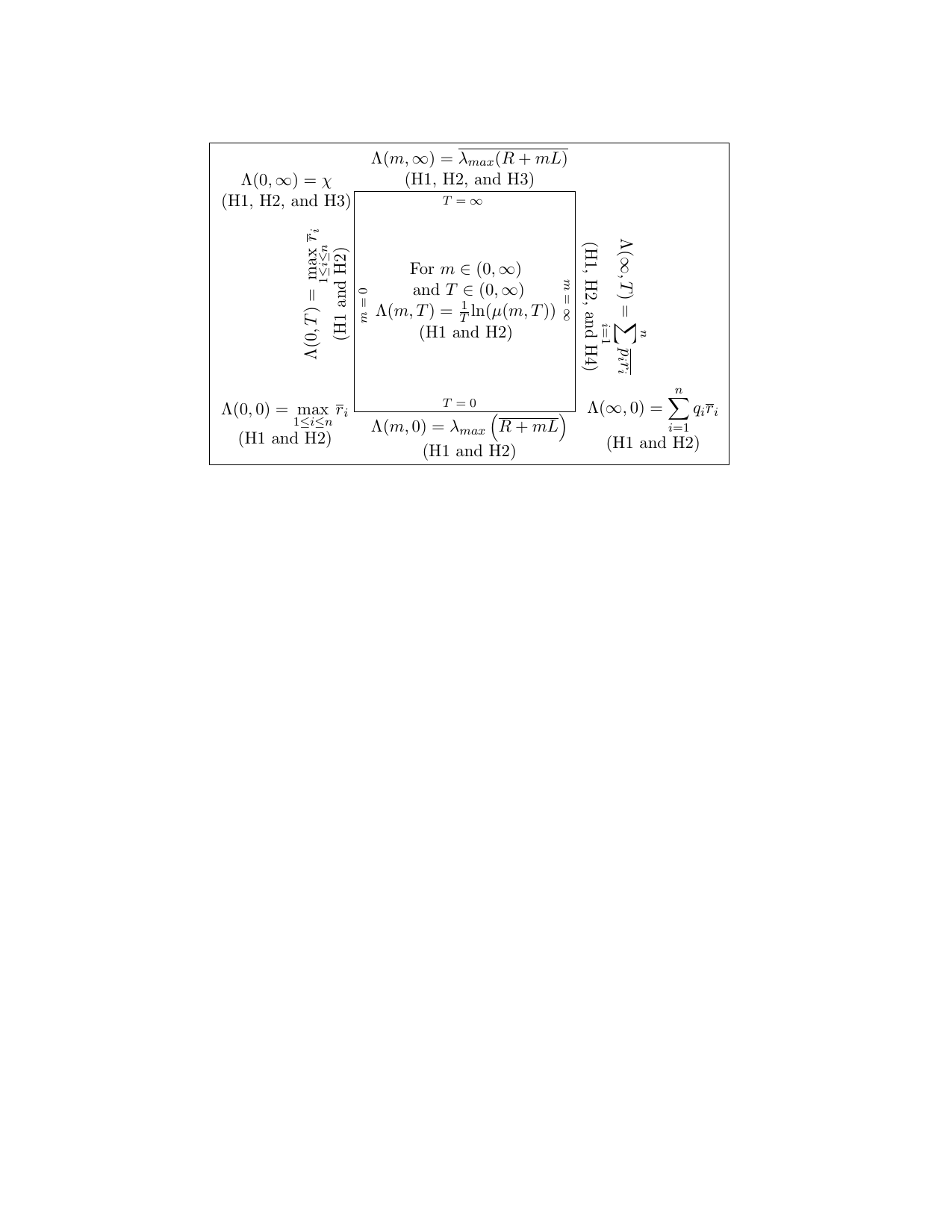}
\caption{The hypotheses under which the growth rate exists and its limits can be determined.   Compare with \cite[Fig. 1]{BLSS}, which was obtained under Hypothesis \ref{H1} and the hypothesis that the matrix $L(\tau)$ is irreducible for all $\tau\in[0,1]$.\label{figure1}}
\end{center}
\end{figure}

The results given by \eqref{m=0}, \eqref{m=infini}, \eqref{mT=infini}, 
\eqref{m=0T=infini}, \eqref{mT=0},  \eqref{m=0T=0}, and \eqref{m=infiniT=0} extend the results in \cite{BLSS} and are summarized in Figure \ref{figure1}. 

\begin{rem}\label{rem1}
Note that in the special case considered in \cite{BLSS}, 
\begin{equation}\label{m=infini,T=infini}
\Lambda(\infty,\infty):=\lim_{m\to \infty}\Lambda(m,\infty)=\sum_{i=1}^n\overline{p_ir_i},
\end{equation}
see \cite[Eq. (17)]{BLSS}.  On the other hand, if ${r}_i(\tau)={r}(\tau)$, for all $i$, 
$\Lambda(m,T)=\overline{r}$ for all $m>0$ and $T>0$, and, if the growth rates are not equal, 
\begin{equation}\label{Lambda(m,infini)convex}
\frac{d}{dm}\Lambda(m,\infty)< 0,\quad \frac{d^2}{dm^2}\Lambda(m,\infty)> 0,
\end{equation}
see \cite[Eq. (19)]{BLSS}.
However, the proofs of \eqref{m=infini,T=infini} and \eqref{Lambda(m,infini)convex} use the irreducibility of $L(\tau)$, and these results are not always satisfied in the case where only Hypothesis \ref{H2} is satisfied; see Remark \ref{NotStrictConvex}.
  \end{rem}

\subsection{Dispersal induced growth}
Following \cite{BLSS,Katriel}, we say that dispersal-induced growth (DIG) occurs if all patches are
sinks ($\overline{r}_i < 0$ for all $i$), but $\Lambda(m, T) > 0$ for some values of $m$ and $T$.

\begin{theorem}\label{Bornes2}
Assume that Hypotheses \ref{H1} and \ref{H2} are satisfied.
For all $m>0$ and $T>0$, 
$\sigma\leq \Lambda(m,T) \leq \chi,$ where
\begin{equation}
\label{chi_minmax1}
\sigma=\overline{\min_{1\leq i\leq n}r_i},
\quad
\chi=\overline{\max_{1\leq i\leq n}r_i}.
\end{equation}
\end{theorem}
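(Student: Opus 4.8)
The plan is to obtain Theorem~\ref{Bornes2} as an immediate specialization of Corollary~\ref{Bornes} to the matrix $A(\tau)=R(\tau)+mL(\tau)$. The only substantive ingredient is a one-line computation of the column sums of $A(\tau)$; everything else is bookkeeping with the time rescaling $t\mapsto t/T$.

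First I would observe that the dispersal matrix $L(\tau)$ has vanishing column sums. Indeed, by the defining relation $\ell_{ii}(\tau)=-\sum_{j\neq i}\ell_{ji}(\tau)$ one has, for every $i$ and every $\tau$,
$$\sum_{j=1}^n L_{ji}(\tau)=\ell_{ii}(\tau)+\sum_{j\neq i}\ell_{ji}(\tau)=0.$$
Since $R(\tau)={\rm diag}(r_1(\tau),\dots,r_n(\tau))$, it follows that $\sum_{j=1}^n A_{ji}(\tau)=r_i(\tau)$ for all $i$ and all $\tau$, independently of $m$. In particular the numbers $\sigma$ and $\chi$ attached to $A$ in \eqref{chi_minmax} coincide with those of \eqref{chi_minmax1}, and likewise for each fixed $\tau$ the convex combination $\langle A(\tau)\theta,{\bf 1}\rangle=\sum_i\big(\sum_j A_{ji}(\tau)\big)\theta_i=\sum_i r_i(\tau)\theta_i$ lies between $\min_i r_i(\tau)$ and $\max_i r_i(\tau)$ for all $\theta\in\Delta$.

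Next, since Corollary~\ref{Bornes} (and Theorem~\ref{thm2}) are stated for $1$-periodic systems, I would apply them not to \eqref{eq3} directly but to its rescaled form \eqref{A(tau)}, namely $dy/d\tau=TA(\tau)y$. For $T>0$ the matrix $TA(\tau)$ is still Metzler, it inherits Assumption~$(i)$ (the discontinuity structure is unchanged), and its average $T\overline{A}=T(\overline{R}+m\overline{L})$ is irreducible precisely because $\overline{A}$ is irreducible under Hypotheses~\ref{H1} and \ref{H2}. Hence Corollary~\ref{Bornes} applies to this system, and its column sums being $T\sum_j A_{ji}(\tau)=Tr_i(\tau)$, it gives
$$T\sigma=\overline{\min_{1\leq i\leq n}Tr_i}\ \leq\ \ln\mu(m,T)\ \leq\ \overline{\max_{1\leq i\leq n}Tr_i}=T\chi,$$
where $\mu(m,T)$ is the Perron--Frobenius root of the monodromy matrix $X(m,T)$. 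Dividing by $T>0$ and recalling from Corollary~\ref{cor2} that $\Lambda(m,T)=\tfrac1T\ln\mu(m,T)$ yields $\sigma\leq\Lambda(m,T)\leq\chi$.

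I do not expect any genuine obstacle: the statement is an immediate consequence of the integral representation \eqref{Lambda=Lambda[rho]1}. The only things to watch are (a) the rescaling — one must run the estimate through the period-$1$ equation $dy/d\tau=TA(\tau)y$ and carry the factor $T$ to the very end — and (b) the trivial but essential fact that $L(\tau)$ has zero column sums, which is exactly what makes the $m$-dependence disappear from the bounds. Alternatively, bypassing Corollary~\ref{Bornes}, one could argue directly from the integral representation: applying the pointwise bound $\min_i r_i(\tau)\le\langle A(\tau)\theta,{\bf 1}\rangle\le\max_i r_i(\tau)$ to the $1$-periodic solution $\theta^*$ of \eqref{eqtheta} for the rescaled system and integrating over one period gives the conclusion.
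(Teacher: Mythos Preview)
Your proposal is correct and follows essentially the same route as the paper: the paper's proof observes that the $i$th column of $A(\tau)=R(\tau)+mL(\tau)$ sums to $r_i(\tau)$, so the constants $\sigma,\chi$ of \eqref{chi_minmax} reduce to those of \eqref{chi_minmax1}, and then invokes Corollary~\ref{Bornes}. You supply the same column-sum computation and the same invocation, and you additionally spell out the time-rescaling bookkeeping (applying Corollary~\ref{Bornes} to the $1$-periodic system $dy/d\tau=TA(\tau)y$ and dividing by $T$), which the paper leaves implicit.
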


\begin{proof}
Since the $i$th column of $A(t)=R(t)+mL(t)$ sums to $r_i(t)$, the numbers $\sigma$ and $\chi$ defined by \eqref{chi_minmax} are given by \eqref{chi_minmax1}
and the result follows from Corollary \ref{Bornes}.
\end{proof}

According to Theorem \ref{Bornes2}, a necessary condition for DIG to occur is $\chi>0$. In fact, this condition is also sufficient, as shown by the following result.

\begin{theorem}\label{DIGthmRed}
Assume that $\chi>0$. For all migration matrices satisfying Hypotheses \ref{H1}, \ref{H2} and \ref{H3}, there exist $m>0$ and $T>0$ such that $\Lambda(m,T)>0$.
{Therefore, if $\overline{r}_i<0$ for all $i$, DIG occurs if and only if $\chi>0$}.   
\end{theorem}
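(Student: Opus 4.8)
The plan is to produce the required pair $(m,T)$ by ``de-limiting'' the double limit $\lim_{m\to 0}\lim_{T\to\infty}\Lambda(m,T)=\chi$. First I would invoke Theorem \ref{Prop14} — whose standing assumptions are precisely the Hypotheses \ref{H1}, \ref{H2}, \ref{H3} assumed here — to know that for every fixed $m>0$ the limit $\Lambda(m,\infty)=\lim_{T\to\infty}\Lambda(m,T)=\overline{\lambda_{max}(R+mL)}$ exists. Proposition \ref{Prop14n} then gives $\lim_{m\to 0}\Lambda(m,\infty)=\chi$. Since $\chi>0$ by hypothesis, this one-sided limit relation by itself — no continuity of $m\mapsto\Lambda(m,\infty)$ is needed — yields some $m_0>0$ with $\Lambda(m_0,\infty)>0$.

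Fixing that $m_0$, the equality $\Lambda(m_0,\infty)=\lim_{T\to\infty}\Lambda(m_0,T)>0$ furnishes a $T_0>0$ (in fact all large $T$) with $\Lambda(m_0,T_0)>0$, which proves the first assertion. For the equivalence, note that DIG, by definition, presupposes $\overline{r}_i<0$ for all $i$. If DIG occurs then $\Lambda(m,T)>0$ for some $m>0$, $T>0$, while Theorem \ref{Bornes2} gives $\Lambda(m,T)\le\chi$; hence $\chi>0$, which is the necessity. Conversely, if $\chi>0$ the first part provides $m_0,T_0$ with $\Lambda(m_0,T_0)>0$ while every patch remains a sink, so DIG occurs.

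I expect no genuine obstacle: the analytic work is entirely carried by Proposition \ref{Prop14n}, Theorem \ref{Prop14} and Theorem \ref{Bornes2}. The two points deserving a word of care are the order of the limits and the necessity of the slow-time regime. For the first, one must let $T\to\infty$ at fixed $m$ and only afterwards let $m\to0$, since it is exactly the iterated limit that equals $\chi$; one then selects a finite $m_0$ first and a finite $T_0$ second. For the second, the fast regime cannot be used: by \eqref{mT=0}, $\Lambda(m,0)=\lambda_{max}(\overline{R+mL})$, and by Proposition \ref{Prop13n} this tends to $\max_i\overline{r}_i<0$ as $m\to0$ in the DIG regime, so the positivity of $\Lambda$ has to be harvested from large $T$ — which is precisely where Hypothesis \ref{H3} becomes indispensable, and it is available by assumption.
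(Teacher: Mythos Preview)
Your proof is correct and follows essentially the same route as the paper: both arguments combine Proposition~\ref{Prop14n} (the double limit $\lim_{m\to0}\Lambda(m,\infty)=\chi$, which rests on Hypotheses~\ref{H1}--\ref{H3}) with the upper bound $\Lambda(m,T)\le\chi$ from Theorem~\ref{Bornes2}. The paper phrases this as $\sup_{m>0,T>0}\Lambda(m,T)=\chi$ and stops there, while you spell out the two successive ``de-limiting'' steps explicitly; the content is the same.
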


\begin{proof}
The result follows from 
$\sup_{m>0,T>0}\Lambda(m,T)=\chi,$
which is an consequence of 
\eqref{m=0T=infini} and Theorem \ref{Bornes2} 
\end{proof}

Theorem \ref{DIGthmRed} extends the result of \cite[Theorem 6]{BLSS} to the case where the irreducibility of the migration matrix  is replaced by the weaker Hypotheses \ref{H2} and \ref{H3} of the present paper.

\subsection{Dispersal induced  decay}
We say that dispersal-induced decay (DID) occurs if all patches are
sources ($\overline{r}_i > 0$ for all $i$), but $\Lambda(m, T) < 0$ for some values of $m$ and $T$. According to Theorem \ref{Bornes2}, a necessary condition for DID to occur is $\sigma<0$.
Theorem \ref{DIGthmRed} follows from the fact that $\chi$, the upper bound of $\Lambda(m,T)$, is in fact its supremum.  
There is no similar result when $\sigma<0$, because the lower bound $\sigma$ of $\Lambda(m,T)$ is far from being its infimum.

\begin{propo}
DID cannot occur if the migration matrix is time independent. 
\end{propo}
\begin{proof}
If the migration matrix $L$ is time independent, then, from \cite[Theorem 10]{BLSS}, 
$$\inf_{m>0,T>0}\Lambda(m,T)=\sum_{1}^nq_i\overline{r}_i,$$ 
where $q\gg 0$ is the Perron vector of $\overline{L}$. Therefore, if $\overline{r}_i\geq 0$ for all $i$, then $\sum_{1}^nq_i\overline{r}_i\geq 0$, so DID cannot occur.
\end{proof}

Our aim now is to describe a class of local growth functions $r_i(t)$ satisfying Hypothesis \ref{H1}, such that  there exist migration matrices for which 
$$\inf_{m>0,T>0}\Lambda(m,T)= \sigma.$$
Therefore, for this class of growth functions, if $\overline{r}_i>0$ for all $i$, and $\sigma<0$, DID occurs.
We assume that during the $k$-th season the minimum of the growth rates $r_i(t)$, $1\leq i\leq n$, is achieved on the same patch $i_k$. More precisely we make the following assumption.

\begin{hyp}\label{HypDID}
There exist a subdivision of $[0,1)$, $0=t_0<t_1<\ldots<t_N<t_{N+1}=1$, and integers 
$i_k\in\{1,\ldots,n\}$, $0\leq k\leq N$, such that for each $k$,   
\begin{equation}\label{HypDID1}
t\in[t_k,t_{k+1})\Longrightarrow
\min_{1\leq i\leq n}r_i(t)=r_{i_k}(t).
\end{equation}
\end{hyp}

\begin{lem}
Hypothesis \ref{HypDID} is satisfied if the growth functions are piecewise constant. 
\end{lem}

\begin{proof}Without loss of generality we can assume that the functions $r_i$, $1\leq i\leq n$,  are piecewise constant on the same subdivision 
$0<t_1<\ldots<t_p<t_{N+1}=1,$
(if not, it suffices to take a subdivision which is finer than the subdivisions for the functions $r_i$). Let 
$t\in[t_k,t_{k+1})$, and $i_k$ be such that
\begin{equation}\label{tk}
\min_{1\leq i\leq n} r_j(t_k) = r_{i_k}(t_k).
\end{equation}
Since the functions $r_i$ are constant on $[t_k,t_{k+1})$, the property \eqref{tk} remains true for all $t\in[t_k,t_{k+1})$, which proves \eqref{HypDID1}.
\end{proof}

Note that Hypothesis \ref{HypDID} is not always  satisfied when the $r_i$ functions verify only Hypothesis \ref{H1}. For example, the growth function $r_1(t)$ defined on $[0,1)$ by $r_1(0)=0$ and 
$r_1(t)=t\sin\left(\frac{2\pi}{t}\right)$ if $t>0$, and the growth function $r_2(t)=0$, do not satisfy \eqref{HypDID1}.

Let $I=\{i\in\mathbb{N}:1\leq i\leq n\}$.
We denote by $J\subset I$ the set of indices that achieve the minimum of the $r_i(t)$, i.e. if $j\in J$ then there exists an interval $[t_k,t_{k+1})$ of the subdivision such that $j=i_k$:
$J=\{i_k\in I:0\leq k\leq N\}.$
Let $K=I\setminus J$. The subset $K\subset I$ is the set of indices that never achieve the minimum. Note that $J$ is not empty, while $K$ can be empty or not.
Assume that Hypothesis \ref{HypDID} is satisfied. Let $L(t)=(\ell_{ij}(t))$ be the piecewise constant 1-periodic migration matrix, whose off-diagonal elements $\ell_{ij}(t)$ are defined on the subdivision $0=t_0<t_1<\ldots<t_N<t_{N+1}=1$ of $[0,1)$ as follows.
\begin{itemize}
\item
If $K=\emptyset$, then, for $0\leq k\leq N$ and $t\in[t_k,t_{k+1})$, $\ell_{ij}(t)$ are given by
\begin{equation}\label{L(t)Kempty}
\ell_{ij}(t)=
\left\{
\begin{array}{l}
1\mbox{ if } i=i_k\mbox{ and }j\neq i,\\
0\mbox{ if } i\neq i_k\mbox{ and }j\neq i.
\end{array}
\right.
\end{equation}
where $i_k$, for $0\leq k\leq N$, are as in Hypothesis \ref{HypDID}. Therefore, for $t\in[t_k,t_{k+1})$, $0\leq k\leq N$, migration is only to the site $i_k$, that achieve the minimum of the growth rates $r_i(t)$ on $[t_k,t_{k+1})$. 
\item
If $K$ is not empty, over the interval $[t_0,t_{1})$
we add small migration terms to the patches $i\in K$. More precisely, for $t\in[t_0,t_{1})$, $\ell_{ij}(t)$ are given by 
\begin{equation}\label{L(t)Knotempty}
\ell_{ij}(t)=
\left\{
\begin{array}{l}
1\mbox{ if } i=i_0\mbox{ and }j\neq i,\\
\varepsilon\mbox{ if } i\in K \mbox{ and }j\neq i,\\
0\mbox{ if } i\neq i_0, i\notin K \mbox{ and }j\neq i.
\end{array}
\right.
\end{equation}
For $k\geq 1$,  $\ell_{ij}(t)$ are defined on  $[t_k,t_{k+1})$ by \eqref{L(t)Kempty}. Therefore, 
for $t\in[t_0,t_{1})$, migration is with rate $1$ to the site $i_0$, that achieve the minimum of the growth rates $r_i(t)$ on $[t_0,t_{1})$, and also, with rate $\varepsilon$, to the sites $i\in K$,  where $\varepsilon$ is small enough.
For $t\in[t_k,t_{k+1})$, $1\leq k\leq N$, migration is only to the site $i_k$, that achieve the minimum of the growth rates $r_i(t)$ on $[t_k,t_{k+1})$.
\end{itemize}

\begin{propo}\label{PropL(t)}
The matrix $L$, defined above, satisfies Hypotheses \ref{H1}, \ref{H2} and \ref{H4}.
\end{propo}

\begin{proof}
The matrix $\ell(t)$ is piecewise constant. Therefore, Hypothesis \ref{H1} is satisfied.
 
Over $[t_k,t_{k+1})$, the off-diagonal elements of the $i_k$th-row of matrix $L(t)$ are equal to 1. Hence, if $K=\emptyset$, or equivalently $J=I$, 
the off-diagonal elements of any row of the matrix $L(t)$ are positive on some subinterval  of the subdivision. Therefore, the averaged matrix $\overline{L}$ has positive off-diagonal elements, and is therefore irreducible.
In the case where $K\neq\emptyset$, on $[t_0,t_1)$, the off-diagonal elements of the $i_0$th-row of matrix $L(t)$ are equal to 1 and the off-diagonal elements of the $i$th-row, $i\in K$, of matrix $L(t)$ are equal to $\varepsilon$. Moreover, for any $j\in J$, the off-diagonal elements of the $j$th-row, of matrix $L(t)$ are equal to 1 on some subinterval of the subdivision.
Hence, the off-diagonal elements of any row of the matrix $L(t)$ are positive on some subinterval of the subdivision. Therefore, the averaged matrix $\overline{L}$ has positive off-diagonal elements, and is therefore irreducible. This proves that Hypothesis \ref{H2} is satisfied.

In the case where $K=\emptyset$,  the migration matrix is defined by \eqref{L(t)Kempty} over $[t_k,t_{k+1})$. The spectrum of $L(t)$ is $\sigma(L(t))=\{0,-1\}$, where 0 is a simple eigenvalue and $-1$ is of multiplicity $n-1$. Therefore, Hypothesis \ref{H4} is satisfied.
Similarly, in the case where $K\neq\emptyset$, over $[0,t_1)$, the migration matrix is defined by
\eqref{L(t)Knotempty}. The spectrum of $L(t)$ is $\sigma(L(t))=\{0,-1-|K|\varepsilon\}$, 
where $|K|$ is the number of elements of $K$,
0 is a simple eigenvalue and $-1-|K|\varepsilon$ is of multiplicity $n-1$. Therefore, Hypothesis \ref{H4} is satisfied.
\end{proof}

\begin{theorem}\label{DIDthmRed}
Assume that the growth rates $r_i(t)$ satisfy Hypotheses \ref{H1} and \ref{HypDID}, and $\sigma<0$. There exist migration matrices satisfying Hypotheses \ref{H1} and \ref{H2}, and there exist $m>0$ and $T>0$ such that $\Lambda(m,T)<0$.
\end{theorem}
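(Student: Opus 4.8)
\emph{Overall strategy.} The plan is to realise $\sigma$ as a limit, along a parameter $\varepsilon\downarrow 0$, of the high‑migration growth rate of a suitable family of migration matrices $L^\varepsilon$. I would use formula \eqref{m=infini}: for any migration matrix satisfying Hypotheses \ref{H1}, \ref{H2} and \ref{H4}, Theorem \ref{Prop15} gives $\lim_{m\to\infty}\Lambda(m,T)=\sum_i\overline{p_i r_i}=\int_0^1\sum_i p_i(\tau)r_i(\tau)\,d\tau$, independently of $T$, where $p(\tau)\in\Delta$ is the Perron vector of $L(\tau)$. So if I build matrices $L^\varepsilon$ (all satisfying \ref{H1}, \ref{H2}, \ref{H4}) whose Perron vectors $p^\varepsilon(\tau)$ converge, as $\varepsilon\to0$, to $e_{i(\tau)}$, where $i(\tau)$ is an index realising $\min_i r_i(\tau)$, then $\sum_i p^\varepsilon_i(\tau)r_i(\tau)\to\min_i r_i(\tau)$ and dominated convergence yields $\lim_{m\to\infty}\Lambda(m,T)\to\int_0^1\min_i r_i=\sigma<0$ for $L^\varepsilon$. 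Fixing $\varepsilon$ small enough so this limit is $<0$ and then $m$ large enough gives $\Lambda(m,T)<0$; combined with the bound $\Lambda(m,T)\ge\sigma$ of Theorem \ref{Bornes2}, this even gives $\inf_{m>0,T>0}\Lambda(m,T)=\sigma$ for this $L^\varepsilon$.

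\emph{The construction.} By Hypothesis \ref{HypDID}, refining the subdivision $\{\tau_k\}$ of $[0,1]$ I obtain finitely many intervals $I_\alpha=[s_\alpha,s_{\alpha+1})$ and indices $j_\alpha\in\{1,\dots,n\}$ with $\min_{1\le i\le n}r_i(t)=r_{j_\alpha}(t)$ on $I_\alpha$. For $\ell\in\{1,\dots,n\}$ let $M_\ell\in\mathcal{M}$ be the matrix routing all migration to patch $\ell$: its $\ell$-th row has off‑diagonal entries equal to $1$, its $i$-th diagonal entry is $-1$ for $i\ne\ell$, and all remaining entries are $0$; then its $\ell$-th column vanishes, its columns sum to $0$, and $0$ is a simple eigenvalue (the others all equal $-1$) with eigenvector $e_\ell$ (the $\ell$-th canonical basis vector). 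Let $C\in\mathcal{M}$ be the all‑to‑all matrix (off‑diagonal entries $1$, diagonal entries $-(n-1)$), and set $L^\varepsilon(t)=M_{j_\alpha}+\varepsilon C$ for $t\in I_\alpha$. Then $L^\varepsilon$ is $1$-periodic, piecewise constant and Metzler, so Hypothesis \ref{H1} holds; $\overline{L^\varepsilon}$ has every off‑diagonal entry $\ge\varepsilon$, hence is irreducible, so Hypothesis \ref{H2} holds; and on each $I_\alpha$ the constant matrix $M_{j_\alpha}+\varepsilon C$ is an irreducible Metzler matrix with zero column sums, so its spectral abscissa is $0$, simple, with Perron vector $p^\varepsilon_\alpha\gg0$, and the linear flow $\dot\eta=(M_{j_\alpha}+\varepsilon C)\eta$ leaves $\Delta$ invariant with $e^{(M_{j_\alpha}+\varepsilon C)s}\to p^\varepsilon_\alpha{\bf 1}^\top$; hence $p^\varepsilon_\alpha$ is globally asymptotically stable on $\Delta$, with basin equal to all of $\Delta$. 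Thus Hypothesis \ref{H4} holds for $L^\varepsilon$ (the uniformity, and the left‑limit condition at the subdivision points, being automatic since the basins are the whole simplex), and Theorem \ref{Prop15} applies to each $L^\varepsilon$.

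\emph{Passing to the limit and conclusion.} Since $0$ is a simple eigenvalue of $M_{j_\alpha}$ with eigenvector $e_{j_\alpha}$, and since $M_{j_\alpha}+\varepsilon C$ still has $0$ as its simple Perron eigenvalue for small $\varepsilon$, regular perturbation theory gives $p^\varepsilon_\alpha\to e_{j_\alpha}$ as $\varepsilon\to0^+$. The integrands $\sum_i p^\varepsilon_i(\tau)r_i(\tau)$ are bounded by $\max_i\sup_\tau|r_i(\tau)|$, uniformly in $\varepsilon$, so dominated convergence gives, for $L^\varepsilon$, $\lim_{m\to\infty}\Lambda(m,T)=\sum_i\overline{p^\varepsilon_i r_i}\to\int_0^1\min_{1\le i\le n}r_i(\tau)\,d\tau=\sigma<0$. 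Choose $\varepsilon_0>0$ with this limit $<0$ and fix $L:=L^{\varepsilon_0}$ (which satisfies Hypotheses \ref{H1}, \ref{H2} and \ref{H4}). Then, for any $T>0$, $\lim_{m\to\infty}\Lambda(m,T)<0$, so there is $m_0>0$ with $\Lambda(m_0,T)<0$; if moreover $\overline{r}_i>0$ for all $i$, every patch is a source and DID occurs. I expect the main obstacle to be exactly this last step, i.e.\ controlling the Perron vector of $M_{j_\alpha}+\varepsilon C$ as $\varepsilon\to0$ — the continuity of a simple eigenvalue and its eigenvector at the \emph{reducible} limit matrix $M_{j_\alpha}$ — together with the verification of Hypothesis \ref{H4} for the perturbed (but explicit) constant matrices; both are routine Perron–Frobenius and perturbation arguments, but they are precisely the reason one cannot simply take $L=M_{j_\alpha}$ on $I_\alpha$, since that choice makes $\overline{L}$ reducible as soon as some patch is never the instantaneous minimiser of the $r_i$.
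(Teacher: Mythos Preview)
Your proposal is correct and follows essentially the same route as the paper: build a migration matrix that routes mass toward the instantaneous minimiser $j_\alpha$, verify Hypothesis~\ref{H4}, apply Theorem~\ref{Prop15} to get $\lim_{m\to\infty}\Lambda(m,T)$ close to $\sigma$, and conclude. The only difference is cosmetic: the paper uses the bare matrices $M_{j_\alpha}$ directly whenever every index is a minimiser at some time (getting $\sigma$ exactly, and verifying \ref{H4} by hand for these reducible matrices), and otherwise adds an $\varepsilon$-perturbation only on the first subinterval and only toward the missing indices; your uniform perturbation $M_{j_\alpha}+\varepsilon C$ makes each $L^\varepsilon(\tau)$ irreducible, so \ref{H4} is automatic, at the price of a (routine) continuity argument $p^\varepsilon_\alpha\to e_{j_\alpha}$.
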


\begin{proof}
Let $L(t)=(\ell_{ij}(t))$ be the migration matrix defined above. Using Proposition \ref{PropL(t)}, the migration matrix $L(t)$ satisfies Hypotheses \ref{H1} and \ref{H2}, and hence, $\Lambda(m,T)$ exists. Since the migration matrix also satisfies Hypothesis \ref{H4} and   
 using Theorem \ref{Prop15},  we obtain
$$\lim_{m\to\infty}\Lambda(m,T)=\sum_{i=1}^n\overline{p_ir_i}=\sum_{i=1}^n\int_0^1p_i(t)r_i(t)dt=\sum_{i=1}^n\sum_{k=0}^N\int_{t_k}^{t_{k+1}}p_i(t)r_i(t)dt,$$ 
where $p(t)$ is the Perron-Frobenius vector of $L(t)$.  
If $K=\emptyset$, since $L(t)$ is defined by
\eqref{L(t)Kempty}, for $0\leq k\leq N$
its Perron-Frobenius vector is given over $[t_k,t_{k+1})$  by
\begin{equation}\label{p(t)}
p_{i}(t)=
\left\{
\begin{array}{l}
1\mbox{ if } i=i_k,\\
0\mbox{ if } i\neq i_k.
\end{array}
\right.
\end{equation}
Therefore,
\begin{align*}
\sum_{i=1}^n\sum_{k=0}^N\int_{t_k}^{t_{k+1}}p_i(t)r_i(t)dt
&=
\sum_{k=0}^N\int_{t_k}^{t_{k+1}}\sum_{i=1}^np_i(t)r_i(t)dt
=
\sum_{k=0}^N\int_{t_k}^{t_{k+1}}r_{i_k}(t)dt\\
&=\sum_{k=0}^N\int_{t_k}^{t_{k+1}}\min_ir_{i}(t)dt
=\int_0^1\min_ir_{i}(t)dt=
\overline{\min_{1\leq j\leq n}r_j}=\sigma
\end{align*}
Thus, $\Lambda(\infty,T)=\sigma$. Therefore, if $\sigma<0$, there exist $m>0$ and $T>0$ such that $\Lambda(m,T)<0$.

If $K\neq\emptyset$, 
since $L(t)$ is defined by \eqref{L(t)Knotempty}, its 
Perron-Frobenius vector $p(t)$ is defined over $[t_0,t_1)$ by
$$ 
p_{i}(t)=
\left\{
\begin{array}{lcl}
1/(1+\varepsilon|K|)&\mbox{if}& i=i_0,\\
\varepsilon/(1+\varepsilon|K|)&\mbox{if}& i\in K,\\
0&\mbox{if}& i\neq i_k\mbox{ and } i\notin K,
\end{array}
\right.
$$
where $|K|$ is the number of elements of $K$.
For $1\leq k\leq N$, $p(t)$ is given by \eqref{p(t)} over $[t_k,t_{k+1})$. 
A simple calculation, similar to the one above for  $K=\emptyset$,  shows that
$$\sum_{i=1}^n\sum_{k=0}^N\int_{t_k}^{t_{k+1}}p_i(t)r_i(t)dt=\sigma+\frac{\varepsilon}{1+\varepsilon|K|}\int_{t_0}^{t_1}\left(\sum_{i\in K}r_i(t)-|K|r_{i_0}(t)\right)dt.$$
Thus, $\Lambda(\infty,T)=\sigma+O(\varepsilon)$. Therefore, if $\sigma<0$ and $\varepsilon$ is small enough, there exist $m>0$ and $T>0$ such that $\Lambda(m,T)<0$. 
\end{proof}

\section{Examples}\label{Numerical}
\label{INNex}

We consider in this section examples with constant growth rate and migration during each season. Since the system is piecewise constant, we will be able to use the Maple software to compute
the monodromy matrix, and
then determine its Perron root. When the growth rates during each season are fixed, we can plot the graph of the function $\Lambda(m,T)$ 
showing the behavior of the growth rate as a function of $m$ and
$T$ for the whole range of values of these parameters. We will compare this plot with our asymptotic formulas  for the limits of $\Lambda(m,T)$, when $m$ or $T$ tends to 0 or infinity. 

\subsection{Two patches case}\label{TPC}

We consider examples with two patches and two seasons, where the growth rate is constant and the migration is constant and unidirectional during each 
season.  

\subsubsection{Unidirectional migration to the most unfavourable patch}\label{MUP}

Consider the $T$-periodic linear system given by
\begin{equation}\label{eq35}
\frac{dx}{dt}=\left\{
\begin{array}{l}
A_1x,\mbox{ if }t\in[0,T/2),\\  
A_2x,\mbox{ if }t\in[T/2,T),
\end{array}
\right.
\mbox{ with }
A_1=\left[
\begin{array}{cc}
a_1-m&0\\
m&b_1
\end{array}
\right],
~
A_2=\left[
\begin{array}{cc}
b_2&m\\
0&a_2-m
\end{array}
\right].
\end{equation} 
The migration, which is from the patch where the growth rate is equal to $a_i$ to the patch where it is equal to $b_i\leq a_i$,  is unidirectional toward the unfavourable patch, see Fig. \ref{figure2}(left). 
We will consider in Section \ref{MFP} the case where the migration is unidirectional toward the favourable patch, corresponding to Fig. \ref{figure2}(right). 
Note that in both cases
$\overline{r}_1=\frac{a_1+b_2}{2}$ and 
$\overline{r}_2=\frac{b_1+a_2}{2}$. 

\begin{figure}[ht]
\begin{center}
\includegraphics[width=10cm,
viewport=160 570 440 675]{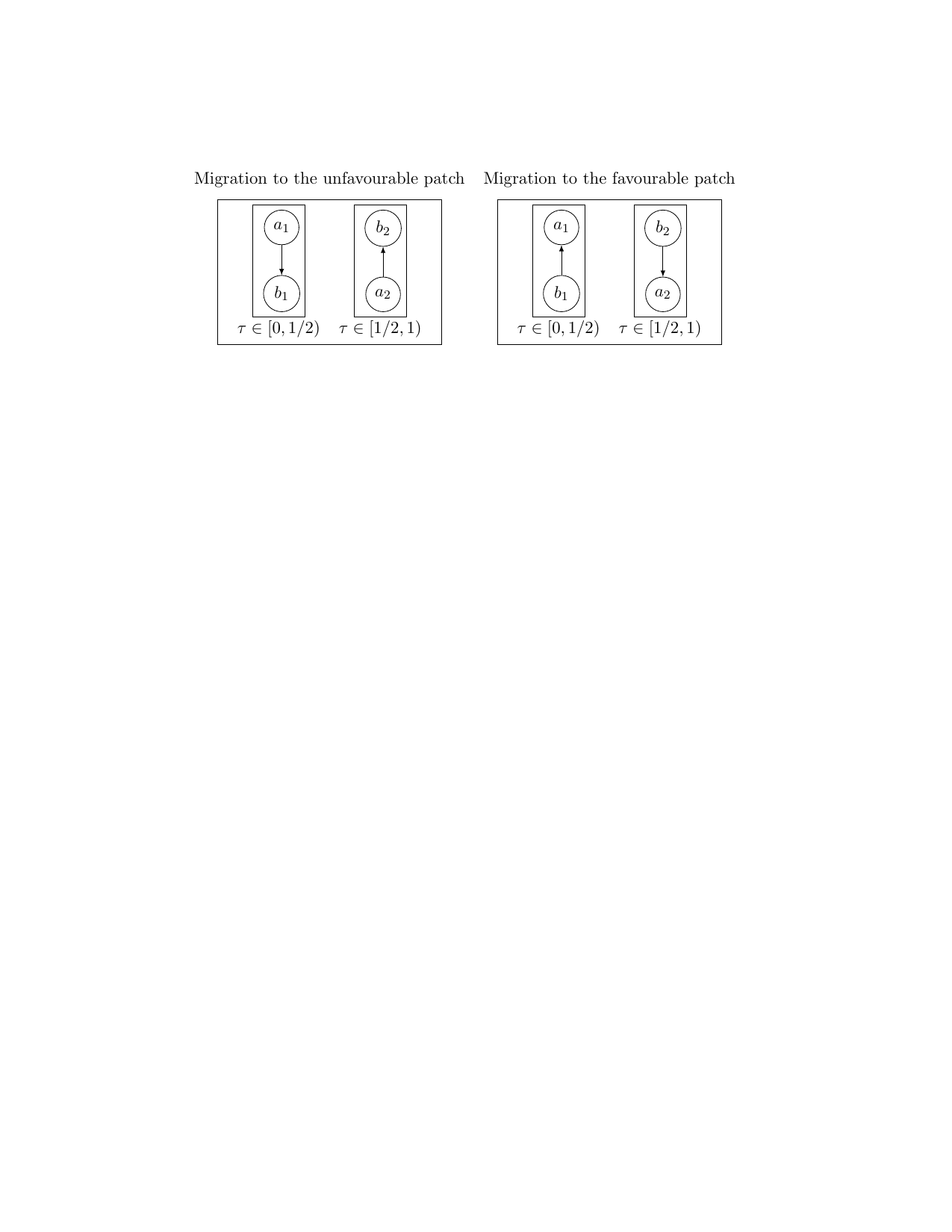}
\caption{The case of two patches with growth rates $a_i\geq b_i$, $i=1,2$, and two seasons. 
\label{figure2}}
\end{center}
\end{figure}

\begin{propo}\label{Prop16}
The system \eqref{eq35} admits a growth rate $\Lambda(m,T)$ and
$$
\begin{array}{l}
\Lambda(0,T)=\max(\overline{r}_1,\overline{r}_2),
\quad
\Lambda(\infty,T)=\frac{b_1+b_2}{2},
\\[1mm]
\Lambda(m,0)=\frac{1}{2}
\left(\overline{r}_1+\overline{r}_2-m+\sqrt{\left(\overline{r}_1-\overline{r}_2\right)^2+m^2}\right),\\
\Lambda(0,0)=\max(\overline{r}_1,\overline{r}_2),
\quad
\Lambda(\infty,0)=\frac{\overline{r}_1+\overline{r}_2}{2}.
\end{array}
$$
Moreover, 
if $a_1\geq b_1$ and $a_2\geq b_2$, then 
$\sigma=\frac{b_1+b_2}{2}$,  $\chi=\frac{a_1+a_2}{2}$, and 
$$
\Lambda(m,\infty)=
\left\{\begin{array}{ll}
\chi-{m}&\mbox{if }0<m<a_1-b_1,\\
\frac{a_2+b_1-m}{2}&\mbox{if }a_1-b_1\leq m
\leq a_2-b_2,\\
\sigma&\mbox{if } m
> a_2-b_2,
\end{array}
\right.
$$
where we assumed, without loss of generality, that $a_1-b_1\leq a_2-b_2$, i.e. $\overline{r}_1\leq \overline{r}_2$.
\end{propo}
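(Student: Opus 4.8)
The plan is to dispatch the eight assertions one at a time; all but the verification of Hypotheses \ref{H3} and \ref{H4} reduce to results of Section \ref{Results} together with elementary $2\times 2$ computations.

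\emph{Existence and the $T=0$ assertions.} Since $r_1,r_2$ and $L$ are piecewise constant, Hypothesis \ref{H1} holds, and $\overline L=\tfrac{1}{2}\left(\begin{smallmatrix}-1&1\\1&-1\end{smallmatrix}\right)$ is irreducible, so Hypothesis \ref{H2} holds; hence Assumptions $(i)$ and $(ii)$ hold and Theorem \ref{thm2} (via Corollary \ref{cor2}) provides the growth rate $\Lambda(m,T)$ for all $m,T>0$. The value $\Lambda(0,T)=\max(\overline r_1,\overline r_2)$ is Proposition \ref{Prop12m=0}. For $\Lambda(m,0)$ I use \eqref{mT=0}: the matrix $\overline{R+mL}=\left(\begin{smallmatrix}\overline r_1-m/2&m/2\\m/2&\overline r_2-m/2\end{smallmatrix}\right)$ has trace $\overline r_1+\overline r_2-m$, and a short computation shows its discriminant equals $(\overline r_1-\overline r_2)^2+m^2$, which gives the stated closed form for $\lambda_{\max}(\overline{R+mL})=\Lambda(m,0)$. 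Letting $m\to 0$ and $m\to\infty$ in this expression (equivalently, invoking \eqref{m=0T=0} and \eqref{m=infiniT=0} with the Perron vector $q=(\tfrac{1}{2},\tfrac{1}{2})$ of $\overline L$) yields $\Lambda(0,0)=\max(\overline r_1,\overline r_2)$ and $\Lambda(\infty,0)=\tfrac{1}{2}(\overline r_1+\overline r_2)$.

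\emph{The $m=\infty$ assertion.} The Perron eigenvector of $L(\tau)$ on $\Delta$ is $p(\tau)=(0,1)$ for $\tau\in[0,\tfrac{1}{2})$ and $p(\tau)=(1,0)$ for $\tau\in[\tfrac{1}{2},1)$. Restricted to $\Delta$, \eqref{eqsimplextauL} reads $\dot\eta_1=-\eta_1$ on the first half-period and $\dot\eta_2=-\eta_2$ on the second, so in each case the coordinate that should vanish decays exponentially to $0$ from any point of $\Delta$; thus $p(\tau)$ is globally asymptotically stable with basin all of $\Delta$, which is trivially uniform in $\tau$. Hence Hypothesis \ref{H4} holds, and Theorem \ref{Prop15} gives $\Lambda(\infty,T)=\overline{p_1r_1+p_2r_2}=\tfrac{1}{2}b_2+\tfrac{1}{2}b_1$.

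\emph{The $T=\infty$ assertion under $a_1\ge b_2$, $a_2\ge b_1$.} These inequalities, with the standing $a_1\ge b_1$, $a_2\ge b_2$, give $\min(r_1,r_2)=b_2$ and $\max(r_1,r_2)=a_1$ on $[0,\tfrac{1}{2})$, and $\min(r_1,r_2)=b_1$ and $\max(r_1,r_2)=a_2$ on $[\tfrac{1}{2},1)$, so $\sigma=\tfrac{1}{2}(b_1+b_2)$ and $\chi=\tfrac{1}{2}(a_1+a_2)$. The matrix $A(\tau)=R(\tau)+mL(\tau)$ equals $\left(\begin{smallmatrix}a_1-m&0\\m&b_2\end{smallmatrix}\right)$ on the first half-period and $\left(\begin{smallmatrix}b_1&m\\0&a_2-m\end{smallmatrix}\right)$ on the second; being triangular, $\lambda_{\max}(A(\tau))=\max(a_1-m,b_2)$, resp.\ $\max(b_1,a_2-m)$. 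To apply Theorem \ref{Prop14} I check Hypothesis \ref{H3}: restricting \eqref{eqsimplextau} to $\Delta$ yields, on the first half-period, the scalar equation $\dot\theta_1=\theta_1\bigl[(a_1-b_2-m)-(a_1-b_2)\theta_1\bigr]$ (and the analogue in $\theta_2$, with $a_1,b_2$ replaced by $a_2,b_1$, on the second). A phase-line analysis of this logistic-type equation shows that, in each of the ranges $0<m<a_1-b_2$, $a_1-b_2<m<a_2-b_1$, and $m>a_2-b_1$, the attracting equilibrium in $[0,1]$ is exactly the Perron eigenvector $v(\tau)$, with basin all of $\Delta$; at the two non-generic values $m=a_1-b_2$ and $m=a_2-b_1$, where $A(\tau)$ has a Jordan block, the equation degenerates to $\dot\theta_1=-(a_1-b_2)\theta_1^2$ (resp.\ $\dot\theta_2=-(a_2-b_1)\theta_2^2$) and $v(\tau)$ is still asymptotically stable with basin all of $\Delta$. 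Thus \ref{H3} holds for every $m>0$, and Theorem \ref{Prop14} gives $\Lambda(m,\infty)=\tfrac{1}{2}\max(a_1-m,b_2)+\tfrac{1}{2}\max(b_1,a_2-m)$. Using the assumed ordering $a_1-b_2\le a_2-b_1$ (equivalently $\overline r_1\le\overline r_2$) and splitting on the three ranges above gives $\Lambda(m,\infty)=\chi-m$, $\tfrac{1}{2}(a_2+b_2-m)$, and $\sigma$, respectively.

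\emph{Main obstacle.} The only step beyond citations and short $2\times 2$ computations is the verification of Hypotheses \ref{H3} and \ref{H4}: one must run the phase-line analysis of the reduced scalar equation on $\Delta$ to confirm that the globally attracting equilibrium coincides with $v(\tau)$ (resp.\ $p(\tau)$) in every parameter regime, and that the uniform basin contains the continuation of the eigenvector from the previous half-period at the switching times $\tau\in\{0,\tfrac{1}{2}\}$; in particular the two degenerate values of $m$, at which the relevant half-period matrix fails to be diagonalizable, must be handled by hand.
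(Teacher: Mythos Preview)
Your proof follows essentially the same route as the paper's: verify Hypotheses~\ref{H1}--\ref{H2}, invoke Proposition~\ref{Prop12m=0}, Theorem~\ref{Prop13}, and Proposition~\ref{Prop13n} for the easy limits, then check Hypotheses~\ref{H4} and~\ref{H3} via the scalar reduction of \eqref{eqsimplextauL} and \eqref{eqsimplextau} on $\Delta$ in order to apply Theorems~\ref{Prop15} and~\ref{Prop14}. One minor imprecision: in the regime $0<m<a_1-b_2$ the basin of $v_1$ is not all of $\Delta$ (the vertex $(0,1)$ is a repelling fixed point) but only $\{\theta_1>0\}$, which still suffices since $v_2\gg0$ lies in it---and you actually go slightly beyond the paper by treating the degenerate values $m=a_1-b_2$ and $m=a_2-b_1$ explicitly.
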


\begin{proof}
The migration matrix is
\begin{equation*}
L(\tau)=\left[\begin{array}{cc}
-1&0\\
1&0
\end{array}\right] 
\mbox{ for }\tau\in[0,1/2)\quad\mbox{ and }\quad  
L(\tau)=\left[\begin{array}{cc}
0&1\\
0&-1
\end{array}\right]
\mbox{ for }\tau\in[1/2,1).
\end{equation*}
Since $\overline{L}$ is irreducible,  from Theorem \ref{thm2} we deduce that $\Lambda(m,T)$ exists. From Proposition \ref{Prop12m=0} we deduce that $\Lambda(0,T)=\max(\overline{r}_1,\overline{r}_2)$. 
 
The eigenvalues of $L(\tau)$ are 0 and $-1$. Thus, Hypothesis \ref{H4} is satisfied. Therefore, 
using Theorem \ref{Prop15}, and the fact that the Perron-Frobenius vector 
$p(\tau)=\left(p_1(\tau),p_2(\tau)\right)$ 
 of $L(\tau)$ is given by
$$
p_1(\tau)=\left\{
\begin{array}{l}
0\mbox{ if }\tau\in[0,1/2),\\
1\mbox{ if }\tau\in[1/2,1),
\end{array}
\right.
\quad
p_2(\tau)=\left\{
\begin{array}{l}
1\mbox{ if }\tau\in[0,1/2),\\
0\mbox{ if }\tau\in[1/2,1),
\end{array}
\right.
$$
we obtain
$\Lambda(\infty,T)=\overline{p_1r_1+p_2r_2}=\frac{b_1+b_2}{2}$.

 Now we prove the formulas for $\Lambda(m,0)$ and its limits when $m\to 0$ and $m\to \infty$.
The average of $A(\tau)$ is
$$\overline{A}=
\left[
\begin{array}{cc}
\overline{r}_1-m/2&m/2\\
m/2&\overline{r}_2-m/2
\end{array}
\right]
$$
Using 
Theorem \ref{Prop13}, 
$$\Lambda(m,0)=\lambda_{max}(\overline{A})=
\frac{1}{2}
\left(\overline{r}_1+\overline{r}_2-m+\sqrt{\left(\overline{r}_1-\overline{r}_2\right)^2+m^2}\right).
$$

The Perron-Frobenius vector $q$ of $\overline{L}$ is given by $q_1=q_2=1/2$. Therefore, using Proposition \ref{Prop13n} we obtain  
$$
\Lambda(0,0)=
\max\left(\overline{r}_1,\overline{r}_2\right),
\quad
\Lambda(\infty,0)=\frac{\overline{r}_1+\overline{r}_2}{2}.$$

Now we prove the formulas for $\Lambda(m,\infty)$. The eigenvalues of $A_1$ are $a_1-m$ and $b_1$ and those of $A_2$ are $a_2-m$ and $b_2$. Assume that  $a_1\geq b_1$ 
and $a_2\geq b_2$. Then,
$$\lambda_{max}(A_1)=
\left\{\begin{array}{lcl}
a_1-m&\mbox{if}&0<m<a_1-b_1,\\
b_1&\mbox{if}&m\geq a_1-b_1,
\end{array}
\right.
~
\lambda_{max}(A_2)=
\left\{\begin{array}{lcl}
a_2-m&\mbox{if}&0<m<a_2-b_2,\\
b_2&\mbox{if}&m\geq a_2-b_2.
\end{array}
\right.
$$
Without loss of generality, we assume that $a_1-b_1\leq a_2-b_2$.

If $0<m<a_1-b_1$, then $\lambda_{max}(A_1)=a_1-m$, $\lambda_{max}(A_2)=a_2-m$, and
their corresponding Perron-Frobenius vectors are given by
\begin{equation}\label{v1v2}
v_1=\left(
\frac{a_1-b_1-m}{a_1-b_1},\frac{m}{a_1-b_1}\right),
\quad
v_2=\left(
\frac{m}{a_2-b_2},\frac{a_2-b_2m}{a_2-b_2}\right),
\end{equation}
respectively.
Let us prove that Hypothesis \ref{H3} is satisfied. Indeed the differential system on the simplex (parametrized by $\theta_2\in[0,1]$), corresponding to the matrix $A_1$, is
\begin{equation}\label{eqSimp3}
\frac{d\theta_2}{dt}=(1-\theta_2)(m-(a_1-b_1)\theta_2).
\end{equation}
Since $0<m<a_1- b_1$, it admits $\theta_2=1$ and $\theta_2=\frac{m}{a_1-b_1}\in(0,1)$ as equilibria, the first being unstable and the second being globally asymptotically stable in the interior of the simplex. Therefore, $v_1$ is GAS in the interior of the simplex.  Similarly, the
differential system on the simplex (parametrized by $\theta_1\in[0,1]$), corresponding to the matrices $A_2$, is
\begin{equation}\label{eqSimp4}
\frac{d\theta_1}{dt}=(1-\theta_1)(m-(a_2-b_2)\theta_1).
\end{equation}
Since $0<m<a_2- b_2$, it admits $\theta_1=1$ and $\theta_1=\frac{m}{a_2-b_2}\in(0,1)$ as equilibria, the first being unstable and the second being globally asymptotically stable in the interior of the simplex. Therefore, $v_2$ is GAS in the interior of the simplex. 
 Using Theorem \ref{Prop14}, 
$$\Lambda(m,\infty)=
\frac{\lambda_{max}(A_1)+\lambda_{max}(A_2)}{2}=
\frac{a_1-m+a_2-m}{2}=
\chi-m.$$

If $a_1-b_1< m<a_2-b_2$, then $\lambda_{max}(A_1)=b_1$, $\lambda_{max}(A_2)=a_2-m$, and
their corresponding Perron-Frobenius vectors are 
$v_1=\left(0,1\right)$ and $v_2$, given by \eqref{v1v2},
respectively. We have already seen that $v_2$ is GAS for the differential equation \eqref{eqSimp4}. 
Since $m>a_1- b_1$, the differential equation \eqref{eqSimp3} admits only $\theta_2=1$ as a GAS equilibrium. Therefore, $v_1=(0,1)$ is GAS in the interior of the simplex.  
 Using Theorem \ref{Prop14}, 
$$\Lambda(m,\infty)=
\frac{\lambda_{max}(A_1)+\lambda_{max}(A_2)}{2}=
\frac{b_1+a_2-m}{2}.$$

Finally, if $m>a_2-b_2$, then $\lambda_{max}(A_1)=b_1$, $\lambda_{max}(A_2)=b_2$, and
their corresponding Perron-Frobenius vectors are given by
$v_1=\left(0,1\right)$ and $v_2=(1,0)$,
respectively. We have already seen that $v_1$ is GAS for the differential equation \eqref{eqSimp3}. 
Since $m>a_2- b_2$, the differential equation \eqref{eqSimp4} admits only $\theta_1=1$ as a GAS equilibrium. Therefore, $v_2=(1,0)$ is GAS in the interior of the simplex.  
 Using Theorem \ref{Prop14}, 
$$\Lambda(m,\infty)=
\frac{\lambda_{max}(A_1)+\lambda_{max}(A_2)}{2}=
\frac{b_2+b_1}{2}=\sigma.$$
This ends the proof for the formulas giving $\Lambda(m,\infty)$.
\end{proof}

\begin{figure}[ht]
\begin{center}
\includegraphics[width=9cm,
viewport=170 355 430 670]{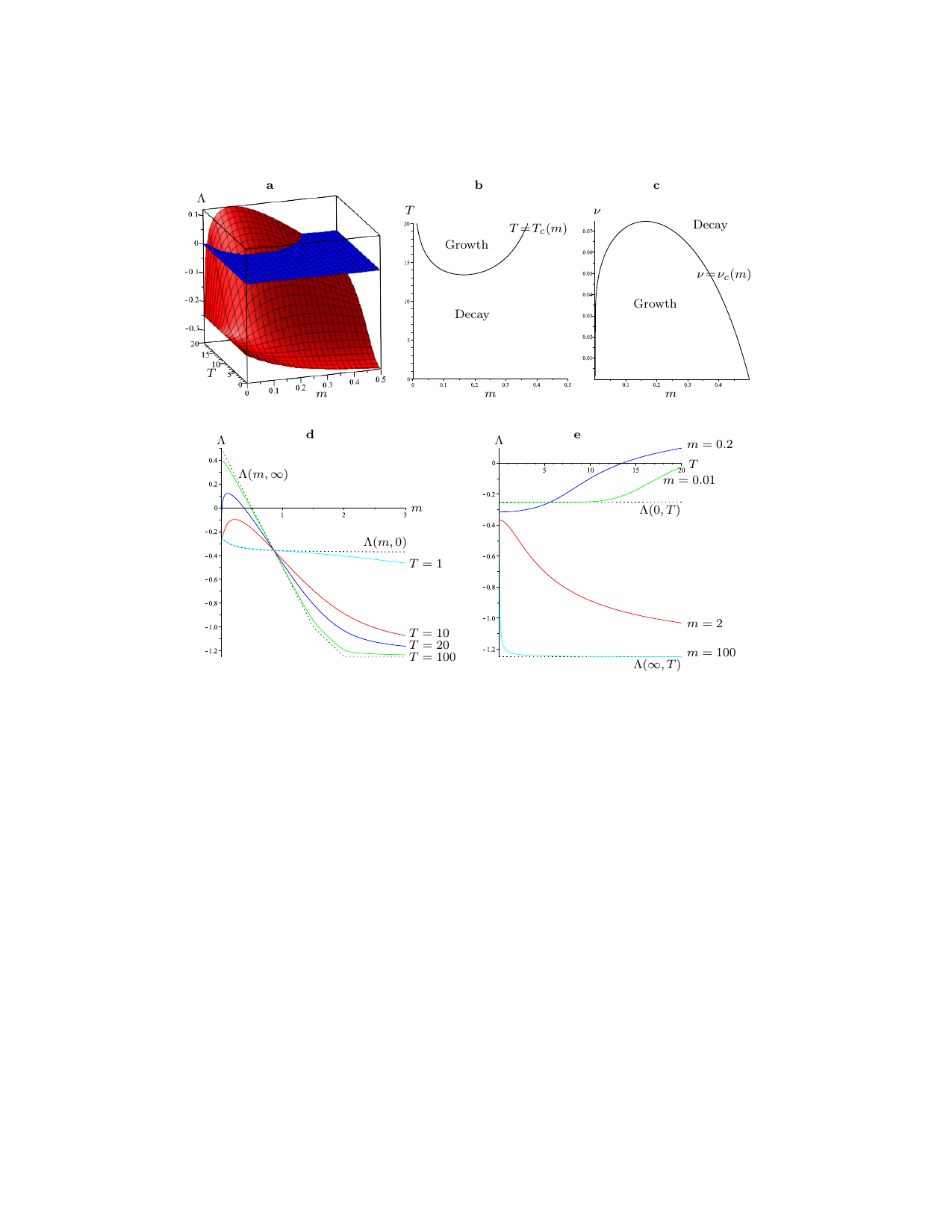}
\caption{
The system is
\eqref{eq35}, with
$a_1=1/2$, $b_1=-1$, $a_2= 1/2$ and $b_2=-3/2$.
{\bf a} The graph of $(m,T)\mapsto\Lambda(m,T)$. 
{\bf b} The set $\Lambda(m,T)=0$.
{\bf c} The set $\Lambda(m,1/\nu)=0$ in the $(m,\nu)$ parameter-plane.
{\bf d} Graphs of $m\mapsto\Lambda(m,T)$ with the indicated values of $T$.
{\bf e} Graphs of $T\mapsto\Lambda(m,T)$ with the indicated values of $m$.   The limits $\Lambda(0,T)$, $\Lambda(m,0)$, $\Lambda(\infty,T)$ and $\Lambda(m,\infty)$ are  given in Table \ref{TableFigure1}.
 \label{figure3}}
\end{center}
\end{figure}

\begin{table}
\caption{Limits of $\Lambda(m,T)$ for the system
considered in Figure \ref{figure3}. Here, $\overline{r}_1= -1/2$, $\overline{r}_2=-1/4$, $\chi=1/2$ and $\sigma=-5/4$.}\label{TableFigure1}
\begin{tabular}{l}
\toprule
$\Lambda(0,T)=-1/4$, 
\quad
$\Lambda(\infty,T)=-5/4$,
\qquad
$\Lambda(0,0)=-1/4$
\quad 
$\Lambda(\infty,0)=-3/8$,
\\[2mm]
$\Lambda(m,0)=-\frac{3}{8}-\frac{m}{2}+\frac{1}{8}\sqrt{1+16m^2}$,
\quad
$
\Lambda(m,\infty)=
\left\{\begin{array}{ll}
1/2-{m}&\mbox{if }0<m<3/2,\\
-1/4-m/2&\mbox{if }3/2\leq m
\leq 2,\\
-5/4&\mbox{if } m
> 2,
\end{array}
\right.
$\\[1mm]
$\Lambda(m^*,\infty)=0$ for $m^*=1/2$.\\
\botrule
\end{tabular}
\end{table}

For the system considered in  Proposition \ref{Prop16},  $\Lambda(0,\infty)=\chi$. Therefore, using Theorem \ref{DIGthmRed}, if $\overline{r}_1<0$, $\overline{r}_2<0$ and $\chi>0$, then the patches are sinks and DIG occurs. This result rigorously establish the conclusions made numerically in  \cite[Section 4.5.1]{BLSS}. 
Let us illustrate the results with an example where DIG occurs.

Consider the system given in Fig. \ref{figure3}. Using the formulas in Proposition \ref{Prop16}, we obtain the limits given in Table \ref{TableFigure1}.  We show in Fig. 
\ref{figure3}(a) the graph of the growth rate $\Lambda(m,T)$ and in Fig. 
\ref{figure3}(b) the set where $\Lambda(m,T)=0$. The figure shows that there exists a critical curve $T=T_c(m)$, defined for $0<m<m^*$, such that $T_c(0)=T_c(m^*)=\infty$ and growth occurs if and only if $T>T_c(m)$. This behavior is best seen in Fig. \ref{figure3}(c) where we display the set where growth occurs in the $(m,\nu)$-plane,
where $\nu= 1/T$ is the frequency. The figure shows that $m^*=1/2$, which is the value of $m$ for which $\Lambda(m,\infty)=0$. Note that the critical curve $v=v_c(m)$ has asymptotic behavior, of the form $m\sim e^{-k/\nu}$, that is $m$ becomes exponentially small in $1/\nu$ near the origin, a fact that was already numerically observed by Katriel \cite{Katriel} and Benaim et al. \cite{BLSS}. It was proved in a particular case, see \cite[Proposition 2.9)]{benaim}.  

The functions $\Lambda(m,0)$ and $\Lambda(m,\infty)$ are depicted in dotted line on Fig. 
\ref{figure3}(d) and the functions $\Lambda(0,T)$ and $\Lambda(\infty,T)$ are depicted in dotted line on Fig. 
\ref{figure3}(e). These figures show the graphs of functions $m\mapsto\Lambda(m,T)$ for various values of $T$, and the graphs of functions $T\mapsto\Lambda(m,T)$ for various values of $m$ and illustrate the convergence toward $\Lambda(m,0)$ and $\Lambda(m,\infty)$ as $T$ tends to 0 and $\infty$ and the convergence toward $\Lambda(0,T)$ and $\Lambda(\infty,T)$ as $m$ tends to 0 and $\infty$.

\subsubsection{Unidirectional migration to the most favourable patch}\label{MFP}

Consider the $T$-periodic linear system given by
\begin{equation}\label{eq35BIS}
\frac{dx}{dt}=\left\{
\begin{array}{l}
A_1x,\mbox{ if }t\in[0,T/2),\\  
A_2x,\mbox{ if }t\in[T/2,T),
\end{array}
\right.
\mbox{ with }
A_1=\left[
\begin{array}{cc}
a_1&m\\
0&b_1-m
\end{array}
\right],
~
A_2=\left[
\begin{array}{cc}
b_2-m&0\\
m&a_2
\end{array}
\right].
\end{equation}
Now the  migration is unidirectional from the patch where the local growth rate is $b_i$ to the patch where it is $a_i\geq b_i$, see Fig. \ref{figure2}(right). 
We have the following result whose proof is similar to the proof of Proposition \ref{Prop16}.

\begin{propo}\label{Prop16n}
The system \eqref{eq35BIS} admits a growth rate $\Lambda(m,T)$ and we have 
the same formulas for $\Lambda(0,T)$ and $\Lambda(m,0)$ as in Proposition \ref{Prop16}, while the limit $\Lambda(\infty,T)$ is now given by
$
\Lambda(\infty,T)=\frac{a_1+a_2}{2}.
$
Moreover, 
if $a_1\geq b_1$ and $a_2\geq b_2$, then 
$\sigma=\frac{b_1+b_2}{2}$, $\chi=\frac{a_1+a_2}{2}$, and 
$\Lambda(m,\infty)=\chi.$ 
\end{propo}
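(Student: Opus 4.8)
The plan is to mirror the proof of Proposition~\ref{Prop16}, running through the same list of limits and dwelling only on the step where the two situations genuinely differ. First, the average migration matrix is $\overline{L}=\left(\begin{smallmatrix}-1/2&1/2\\1/2&-1/2\end{smallmatrix}\right)$, which is irreducible, so Theorem~\ref{thm2} gives the existence of the growth rate $\Lambda(m,T)$, and Proposition~\ref{Prop12m=0} gives $\Lambda(0,T)=\max(\overline{r}_1,\overline{r}_2)$. For $\Lambda(\infty,T)$ I would compute the Perron vector $p(\tau)$ of $L(\tau)$: the kernel of $\left(\begin{smallmatrix}0&1\\0&-1\end{smallmatrix}\right)$ is spanned by $(1,0)$ and the kernel of $\left(\begin{smallmatrix}-1&0\\1&0\end{smallmatrix}\right)$ by $(0,1)$, so $p(\tau)=(1,0)$ on $[0,1/2)$ and $p(\tau)=(0,1)$ on $[1/2,1)$. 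The induced scalar equation on the simplex is $\dot\theta_1=1-\theta_1$ on $[0,1/2)$ and $\dot\theta_1=-\theta_1$ on $[1/2,1)$, each having a globally asymptotically stable equilibrium in $\Delta$, so Hypothesis~\ref{H4} holds; Theorem~\ref{Prop15} then yields $\Lambda(\infty,T)=\overline{p_1r_1+p_2r_2}=\frac{1}{2}(a_1+a_2)$.

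For $\Lambda(m,0)$ the matrix $A(\tau)=R(\tau)+mL(\tau)$ equals $A_1=\left(\begin{smallmatrix}a_1&m\\0&b_2-m\end{smallmatrix}\right)$ on $[0,1/2)$ and $A_2=\left(\begin{smallmatrix}b_1-m&0\\m&a_2\end{smallmatrix}\right)$ on $[1/2,1)$, so the average $\overline{A}=\left(\begin{smallmatrix}\overline{r}_1-m/2&m/2\\m/2&\overline{r}_2-m/2\end{smallmatrix}\right)$ is \emph{exactly} the one appearing in the proof of Proposition~\ref{Prop16}. Hence Theorem~\ref{Prop13} and Proposition~\ref{Prop13n} give the same expression for $\Lambda(m,0)$ and the same limits $\Lambda(0,0)=\max(\overline{r}_1,\overline{r}_2)$ and $\Lambda(\infty,0)=\frac{1}{2}(\overline{r}_1+\overline{r}_2)$, with $q=(1/2,1/2)$.

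The only new computation is $\Lambda(m,\infty)$, under the standing assumptions $a_1\geq b_2$ and $a_2\geq b_1$. The eigenvalues of $A_1$ are $a_1$ and $b_2-m$, those of $A_2$ are $a_2$ and $b_1-m$; since $a_1\geq b_2>b_2-m$ and $a_2\geq b_1>b_1-m$ for every $m>0$, one has $\lambda_{max}(A_1)=a_1$ and $\lambda_{max}(A_2)=a_2$ for all $m>0$, with Perron vectors $(1,0)$ and $(0,1)$. The scalar equation on the simplex reduces to $\dot\theta_1=(1-\theta_1)\left(m+(a_1-b_2)\theta_1\right)$ for $A_1$ and to $\dot\theta_1=-\theta_1\left(m+(a_2-b_1)(1-\theta_1)\right)$ for $A_2$; since $a_1-b_2\geq0$ and $a_2-b_1\geq0$, the relevant vertex is globally asymptotically stable in the interior of $\Delta$ in both cases, so Hypothesis~\ref{H3} holds and Theorem~\ref{Prop14} gives $\Lambda(m,\infty)=\frac{1}{2}\left(\lambda_{max}(A_1)+\lambda_{max}(A_2)\right)=\frac{1}{2}(a_1+a_2)$ for all $m>0$. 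Finally, $\min(r_1,r_2)=b_2$ on $[0,1/2)$ and $=b_1$ on $[1/2,1)$ give $\sigma=\frac{1}{2}(b_1+b_2)$, while $\max(r_1,r_2)=a_1$ on $[0,1/2)$ and $=a_2$ on $[1/2,1)$ give $\chi=\frac{1}{2}(a_1+a_2)$, so $\Lambda(m,\infty)=\chi$. The main obstacle---and really the only conceptual content---is the bookkeeping of which diagonal entry carries the leading eigenvalue: because migration now feeds the \emph{favourable} patch, the entries $a_i$ are untouched by $mL(\tau)$, the $m$-dependent transitions of Proposition~\ref{Prop16} disappear, and $\Lambda(m,\infty)$ stays constant and equal to $\chi$.
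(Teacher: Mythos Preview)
Your proof is correct and follows exactly the approach the paper intends: the paper itself gives no proof beyond the one-line remark that it is ``similar to the proof of Proposition~\ref{Prop16}'', and you have carried out that analogy in full detail. One small wording point: you say the Perron vertex is globally asymptotically stable ``in the interior of $\Delta$'', but for Hypothesis~\ref{H3} you actually need the previous season's equilibrium, which here is the \emph{boundary} vertex $(1,0)$ or $(0,1)$, to lie in the basin of attraction; your scalar equations $\dot\theta_1=(1-\theta_1)(m+(a_1-b_2)\theta_1)$ and $\dot\theta_1=-\theta_1(m+(a_2-b_1)(1-\theta_1))$ do show global attraction on all of $[0,1]$ (e.g.\ $\dot\theta_1|_{\theta_1=1}=-m<0$), so the argument is complete, but the phrasing should reflect this.
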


\begin{rem}
The limit $\Lambda(0,\infty)=\chi$ can be obtained directly from the formulas in Propositions \ref{Prop16} and \ref{Prop16n}, giving 
$\Lambda(m,\infty)$
or by using Proposition \ref{Prop14n}. 
\end{rem}

\begin{rem}\label{NotStrictConvex}
Note that from the formulas for 
$\Lambda(m,\infty)$ in Propositions \ref{Prop16} and \ref{Prop16n}, we deduce that 
 $\Lambda(\infty,\infty)=\overline{p_1r_1+p_2r_2}=\Lambda(\infty,T)$, a formula which is always true in the case where $L(\tau)$ is irreducible for all $\tau$, see Remark \ref{rem1}. However the strict convexity stated in \eqref{Lambda(m,infini)convex} {does not hold}, since $\Lambda(m,\infty)$ is piecewise linear or constant.
\end{rem}


\begin{figure}[ht]
\begin{center}
\includegraphics[width=9cm,
viewport=170 355 430 670]{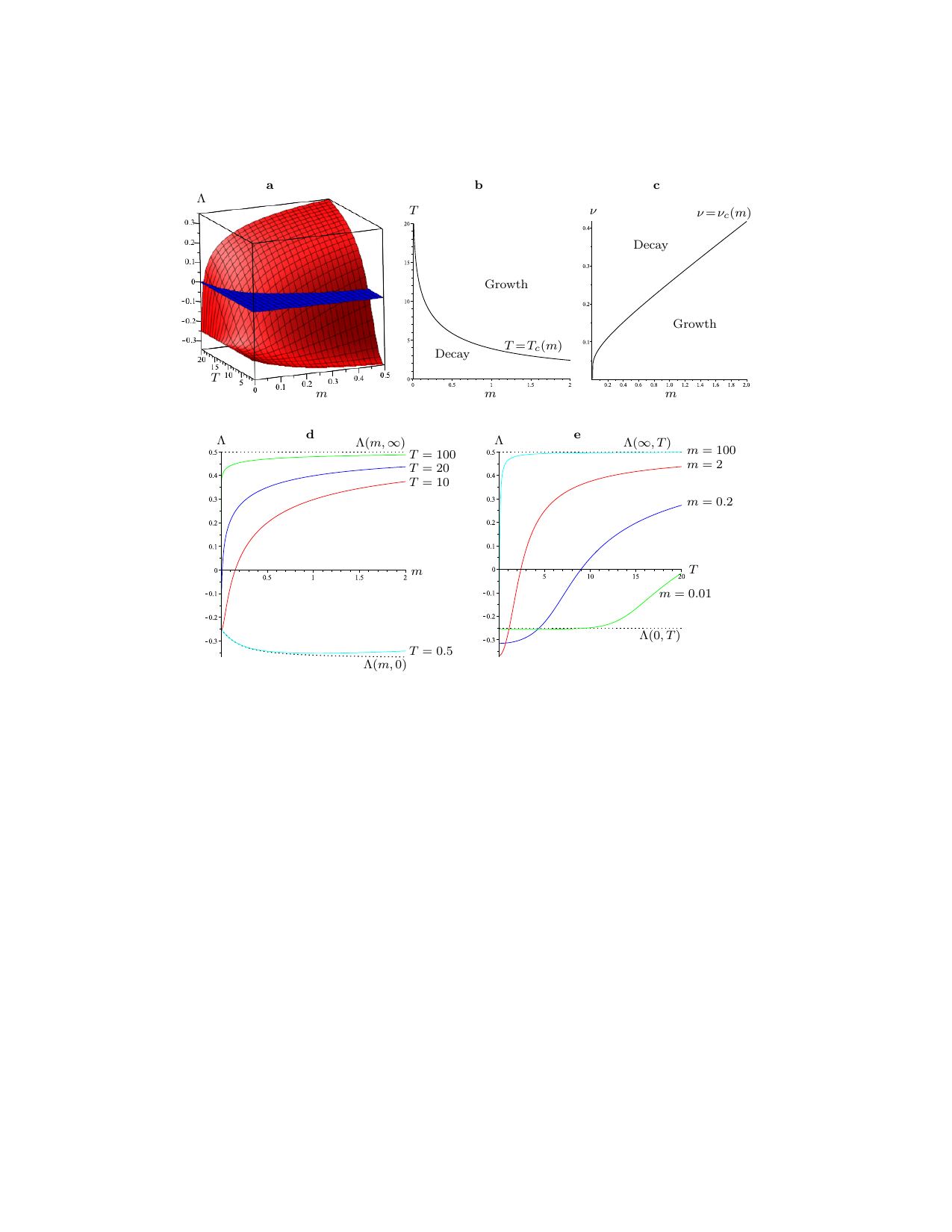}
\caption{The system is
\eqref{eq35BIS}, with
$a_1=1/2$, $b_1=-1$, $a_2= 1/2$ and $b_2=-3/2$. {\bf a} The graph of $(m,T)\mapsto\Lambda(m,T)$. 
{\bf b} The set $\Lambda(m,T)=0$.
{\bf c} The set $\Lambda(m,1/\nu)=0$ in the $(m,\nu)$ parameter-plane. 
{\bf d} Graphs of $m\mapsto\Lambda(m,T)$ with the indicated values of $T$.
{\bf e} Graphs of $T\mapsto\Lambda(m,T)$ with the indicated values of $m$. The limits $\Lambda(0,T)$ and  
$\Lambda(m,0)$ are as in Table \ref{TableFigure1}, and $\Lambda(\infty,T)=\Lambda(m,\infty)=1/2$. 
\label{figure4}}
\end{center}
\end{figure}

For the systems considered in  Proposition \ref{Prop16n},  $\Lambda(0,\infty)=\chi$. Therefore, using Theorem \ref{DIGthmRed}, if $\overline{r}_1<0$, $\overline{r}_2<0$ and $\chi>0$, then the patches are sinks and DIG occurs. This result rigorously establish the conclusions made numerically in  \cite[Section 4.5.1]{BLSS}. 

Let us illustrate the result with an example where DIG occurs.
We consider the system given in Fig. \ref{figure4}. Using the formulas in Proposition \ref{Prop16n}, we obtain the same expressions for the limits $\Lambda(0,T)$ and $\Lambda(m,0)$ as those given in Table \ref{TableFigure1}, while the limits $\Lambda(\infty,T)$ and $\Lambda(m,\infty)$ are now given by $\Lambda(\infty,T)=\Lambda(m,\infty)=\chi$.  
We show in Fig. 
\ref{figure3}(d) the graph of the growth rate $\Lambda(m,T)$ and in Fig. 
\ref{figure3}(e) the set where $\Lambda(m,T)=0$. The figure shows that there exists a critical curve $T=T_c(m)$, defined for $m>0$, such that $T_c(0)=\infty$, $T_c(\infty)=0$,  and growth occurs if and only if $T>T_c(m)$. This behavior is best seen in Fig. \ref{figure4}(c) where we display the set where growth occurs in the $(m,\nu)$-plane,
where $\nu= 1/T$ is the frequency. The figure shows that the critical curve $v=v_c(m)$ verifies $\nu_c(0)=0$, $\nu_c(\infty)=0$ and $m$ becomes exponentially small in $1/\nu$ near the origin. Determining the asymptotic behavior in $m=0$ and $m=\infty$ of the critical curve is an open problem that deserves more attention.  

The functions $\Lambda(m,0)$ and $\Lambda(m,\infty)$ are depicted in dotted line on Fig. 
\ref{figure4}(d) and the functions $\Lambda(0,T)$ and $\Lambda(\infty,T)$ are depicted in dotted line on Fig. 
\ref{figure4}(e). These figures show the graphs of functions $m\mapsto\Lambda(m,T)$ for various values of $T$, and the graphs of functions $T\mapsto\Lambda(m,T)$ for various values of $m$ and illustrate the convergence toward $\Lambda(m,0)$ and $\Lambda(m,\infty)$ as $T$ tends to 0 and $\infty$ and the convergence toward $\Lambda(0,T)$ and $\Lambda(\infty,T)$ as $m$ tends to 0 and $\infty$.

\subsubsection{Dispersal induced decay}

The system considered in Proposition \ref{Prop16} provides examples for which DID can occur. Indeed since $\Lambda(\infty,T)=\sigma$, the lower bound $\sigma$ of $\Lambda(m,T)$ is its infimum. Hence, if $\overline{r}_1>0$, $\overline{r}_2>0$ and $\sigma<0$, then the patches are sources and DID occurs.

\begin{figure}[ht]
\begin{center}
\includegraphics[width=9cm,
viewport=170 355 430 670]{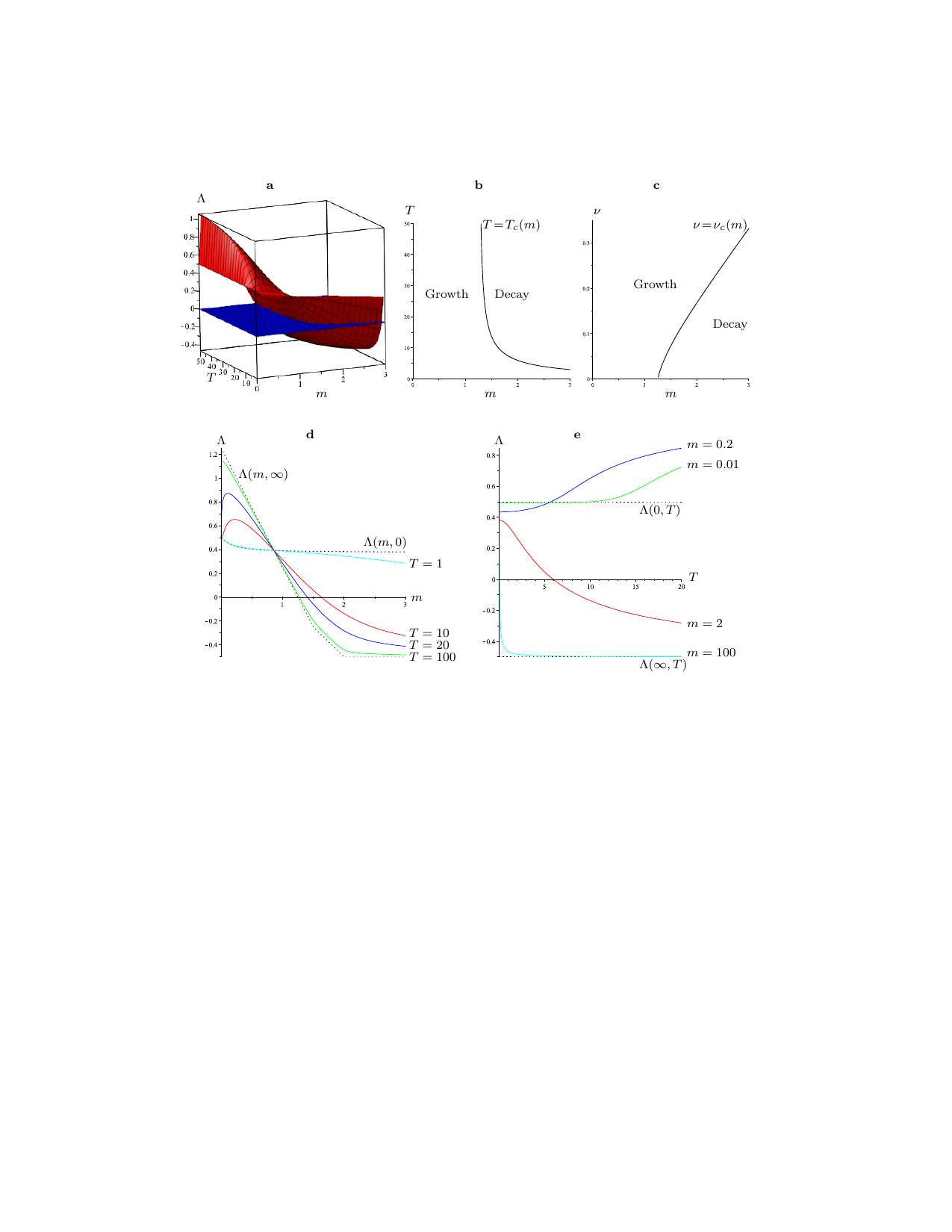}
\caption{
The system is \eqref{eq35}, with
$a_1=1$, $b_1=-1/2$, $a_2= 3/2$ and $b_2=-1/2$.
{\bf a} The graph of $(m,T)\mapsto\Lambda(m,T)$. 
{\bf b} The set $\Lambda(m,T)=0$.
{\bf c} The set $\Lambda(m,1/\nu)=0$ in the $(m,\nu)$ parameter-plane.
{\bf d} Graphs of $m\mapsto\Lambda(m,T)$ with the indicated values of $T$.
{\bf e} Graphs of $T\mapsto\Lambda(m,T)$ with the indicated values of $m$. The limits $\Lambda(0,T)$, $\Lambda(m,0)$, $\Lambda(\infty,T)$ and $\Lambda(m,\infty)$ are  given in Table \ref{TableFigure1DID}
\label{figure5}}
\end{center}
\end{figure}

\begin{table}
\caption{Limits of $\Lambda(m,T)$ for the system
considered in Fig. \ref{figure5}. Here, $\overline{r}_1= 1/4$, $\overline{r}_2=1/2$, $\chi=5/4$ and $\sigma=-1/2$.}\label{TableFigure1DID}
\begin{tabular}{l}
\toprule
$\Lambda(0,T)=1/2$, 
\quad
$\Lambda(\infty,T)=-1/2$,
\qquad
$\Lambda(0,0)=1/2$
\quad 
$\Lambda(\infty,0)=3/8$,
\\[2mm]
$\Lambda(m,0)=\frac{3}{8}-\frac{m}{2}+\frac{1}{8}\sqrt{1+16m^2}$,
\quad
$
\Lambda(m,\infty)=
\left\{\begin{array}{ll}
5/4-{m}&\mbox{if }0<m<3/2,\\
1/2-m/2&\mbox{if }3/2\leq m
\leq 2,\\
-1/2&\mbox{if } m
> 2,
\end{array}
\right.
$\\[1mm]
$\Lambda(m^*,\infty)=0$ for $m^*=5/4$.\\
\botrule
\end{tabular}
\end{table}

Let us illustrate the result with an example where DID occurs. We consider the system given in Fig. \ref{figure5}. Using the formulas in Proposition \ref{Prop16}, we obtain the limits given in Table \ref{TableFigure1DID}.  We show in Fig. 
\ref{figure5}(a) the graph of the growth rate $\Lambda(m,T)$ and in Fig. 
\ref{figure5}(b) the set where $\Lambda(m,T)=0$. The figure shows that there exists a critical curve $T=T_c(m)$, defined for $m>m^*$, such that $T_c(m^*)=\infty$, $T_c(\infty)=0$, and decay occurs if and only if $T>T_c(m)$. This behavior is best seen in Fig. \ref{figure5}(c) where we display the set where growth occurs in the $(m,\nu)$-plane,
where $\nu= 1/T$ is the frequency. The figure shows that $m^*=5/4$, which is the value of $m$ for which $\Lambda(m,\infty)=0$. Determining the asymptotic behavior in $m=m^*$ and $m=\infty$ of the critical curve is an open problem.

The functions $\Lambda(m,0)$ and $\Lambda(m,\infty)$ are depicted in dotted line on Fig. 
\ref{figure5}(d) and the functions $\Lambda(0,T)$ and $\Lambda(\infty,T)$ are depicted in dotted line on Fig. 
\ref{figure5}(e). These figures show the graphs of functions $m\mapsto\Lambda(m,T)$ for various values of $T$, and the graphs of functions $T\mapsto\Lambda(m,T)$ for various values of $m$ and illustrate the convergence toward $\Lambda(m,0)$ and $\Lambda(m,\infty)$ as $T$ tends to 0 and $\infty$ and the convergence toward $\Lambda(0,T)$ and $\Lambda(\infty,T)$ as $m$ tends to 0 and $\infty$.

\begin{figure}[ht]
\begin{center}
\includegraphics[width=9cm,
viewport=170 500 430 660]{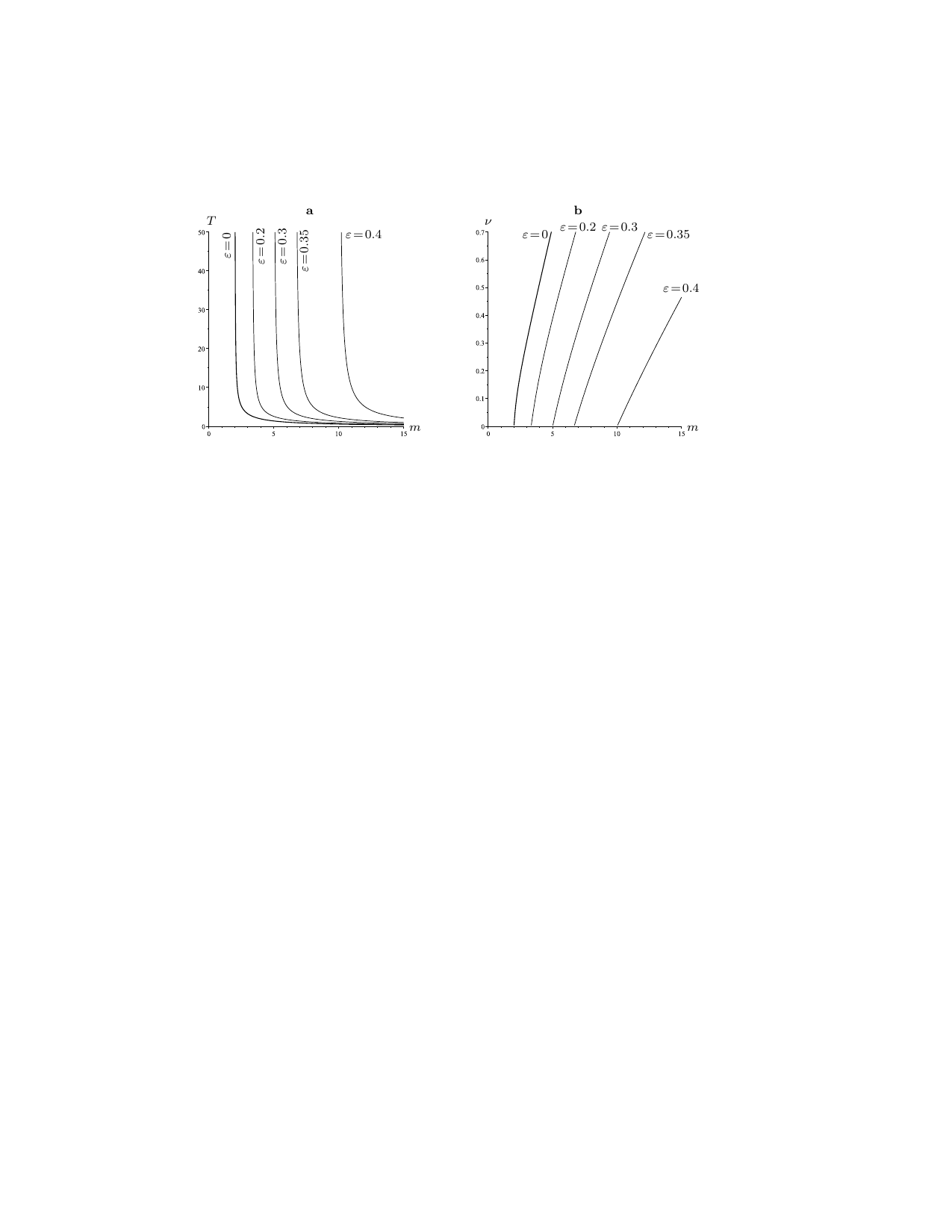}
\caption{The two patch system is given in Proposition \ref{Propab}, with $a=2$, $b=-1$.
{\bf a} Graphs of $m\mapsto T_c(m)$ with the indicated values of $\varepsilon$.
{\bf b} Graphs of $T\mapsto\nu_c(m)$ with the indicated values of $\varepsilon$. 
\label{figure6}}
\end{center}
\end{figure}

The previous example gives an illustration for Theorem \ref{DIDthmRed}. In the proof of this theorem, we have shown that for the migration matrix, which consists at each instant in migrating to the most unfavourable patch,  $\Lambda(\infty,T)=\sigma$. Therefore, if $\sigma<0$ and $m$ is large enough, then $\Lambda(m,T)<0$. 
However, it should be noted that, for DID to occur, migration needs not be to the worst-case patch only. Indeed, since the condition $\Lambda(m,T)<0$ is open, then the growth rate remains negative, even if small migration terms are added toward the favourable patches. The following proposition shows that migration terms from unfavourable patches to favourable patches can be quite large.  

\begin{propo}\label{Propab}
The system 
$$
\frac{dx}{dt}=\left\{
\begin{array}{l}
A_1x,\mbox{ if }t\in[0,T/2),\\  
A_2x,\mbox{ if }t\in[T/2,T),
\end{array}
\right.
\mbox{ with }
A_1=\left[
\begin{array}{cc}
a-m&\varepsilon m\\
m&b-\varepsilon m
\end{array}
\right],
~
A_2=\left[
\begin{array}{cc}
b-\varepsilon m&m\\
\varepsilon m&a-m
\end{array}
\right],
$$
where $a>0>b$ and
$\varepsilon\geq 0$, satisfies
$\Lambda(\infty,T)=\frac{b+\varepsilon a}{1+\varepsilon}.$
If $\varepsilon<-b/a<1$, then both patches are sources and DID occurs.
\end{propo}
\begin{proof}
The migration matrix s given by
$$L(\tau)=\left[\begin{array}{cc}
-1&\varepsilon\\
1&-\varepsilon
\end{array}\right] 
\mbox{ for }\tau\in[0,1/2)\quad\mbox{ and }\quad  
L(\tau)=\left[\begin{array}{cc}
-\varepsilon&1\\
\varepsilon&-1
\end{array}\right]
\mbox{ for }\tau\in[1/2,1).
$$ 
If $\varepsilon=0$, the formula $\Lambda(\infty,T)=b$ is a particular case of the formula for $\Lambda(\infty,T)$ given in Proposition \ref{Prop16}. 
If $\varepsilon>0$, the Perron-Frobenius vector 
$p(\tau)$ 
 of $L(\tau)$ is given by
$$
p_1(\tau)=\left\{
\begin{array}{l}
\frac{\varepsilon}{1+\varepsilon}\mbox{ if }\tau\in[0,1/2),\\
\frac{1}{1+\varepsilon}\mbox{ if }\tau\in[1/2,1),
\end{array}
\right.
\quad
p_2(\tau)=\left\{
\begin{array}{l}
\frac{1}{1+\varepsilon}\mbox{ if }\tau\in[0,1/2),\\
\frac{\varepsilon}{1+\varepsilon}\mbox{ if }\tau\in[1/2,1).
\end{array}
\right.
$$ 
 Using Theorem \ref{Prop15} (or \cite[Eq. (15)]{BLSS} since the migration matrix is irreducible for all $\tau$), 
$$\Lambda(\infty,T)=\overline{p_1r_1+p_2r_2}=\frac{b+\varepsilon a}{1+\varepsilon}.$$
Therefore, if $b+\varepsilon a<0$ and $m$ is large enough, then $\Lambda(m,T)<0$ for some $T$.
The condition $b+\varepsilon a<0$ is equivalent to the condition $\varepsilon<-b/a$. The condition $-b/a<1$ is equivalent to the condition $a+b>0$ which means that both patches are sources.
\end{proof}

The critical functions $T_c(m)$ and $\nu_c(m)$ such that decay occurs if $T>T_c(m)$ or, equivalently, $\nu<\nu_c(m)$, and growth occurs if $T<T_c(m)$ or, equivalently, $\nu>\nu_c(m)$, are shown in Fig. 
\ref{figure6}, for the system given in Proposition \ref{Propab}, with $a=2$ and $b=-1$. As you would expect, the larger $\varepsilon$ is, the smaller the area in which decay takes place.
According to Proposition \ref{Propab}, DID occurs for $0\leq \varepsilon<1/2$. 
Consequently, $\varepsilon$, which represents the migration term from the unfavourable patch to the favourable patch, can be at most half of 1, which represents the migration from the favourable patch to the unfavourable patch.

 \subsection{Three patches case}

We consider examples with three patches and three seasons, where the growth rate and the migration are constant during each 
season.  
\subsubsection{The threshold $\chi$ is positive, but DIG does not occur}\label{ExLobryNoDIG}

Consider the $T$-periodic linear system given by
\begin{equation}\label{eq47}
\frac{dx}{dt}=\left\{
\begin{array}{l}
A_1x,\mbox{ if }t\in[0,T/3),\\  
A_2x,\mbox{ if }t\in[T/3,2T/3),\\
A_3x,\mbox{ if }t\in[2T/3,T),
\end{array}
\right.
\end{equation}
where
$$
{A_1=\left(
\begin{array}{ccc}a&0&0\\
0&b-m&m\\
0&m&b-m
\end{array}
\right)
},
~ 
{A_2=
\left(
\begin{array}{ccc}
b-m&0&m\\
0&a&0\\
m&0&b-m
\end{array}
\right)
},
~
{A_3=
\left(
\begin{array}{ccc}
b-m&m&0\\
m&b-m&0\\
0&0&a
\end{array}
\right)
}.
$$

This system is shown in Fig. \ref{figure7}(a).
There are three patches, two with a growth rate equal to $b$ and the third with the growth rate
equal to $a>b$. The growth rate is indicated in the patch. Patch 1 is at bottom left, patch 2 at bottom right and patch 3 at top. There are 3 seasons. 
This system is the same as the three-patch model given in \cite[Fig. 8]{BLSS}. The migration is symmetric and is only between the patches where the growth rate is $b$. 
We saw in \cite[Section 4.5.2]{BLSS}, by numerical simulation, that DIG occurs for $a=1$ and $b=-0.8$, but does not occur when $a=1$ and $b=-1$, see \cite[Fig. 9]{BLSS}. The aim of this section is to use our theoretical results to better understand the behaviour of the system.

\begin{figure}[ht]
\begin{center}
\includegraphics[width=10cm,
viewport=150 490 440 680]{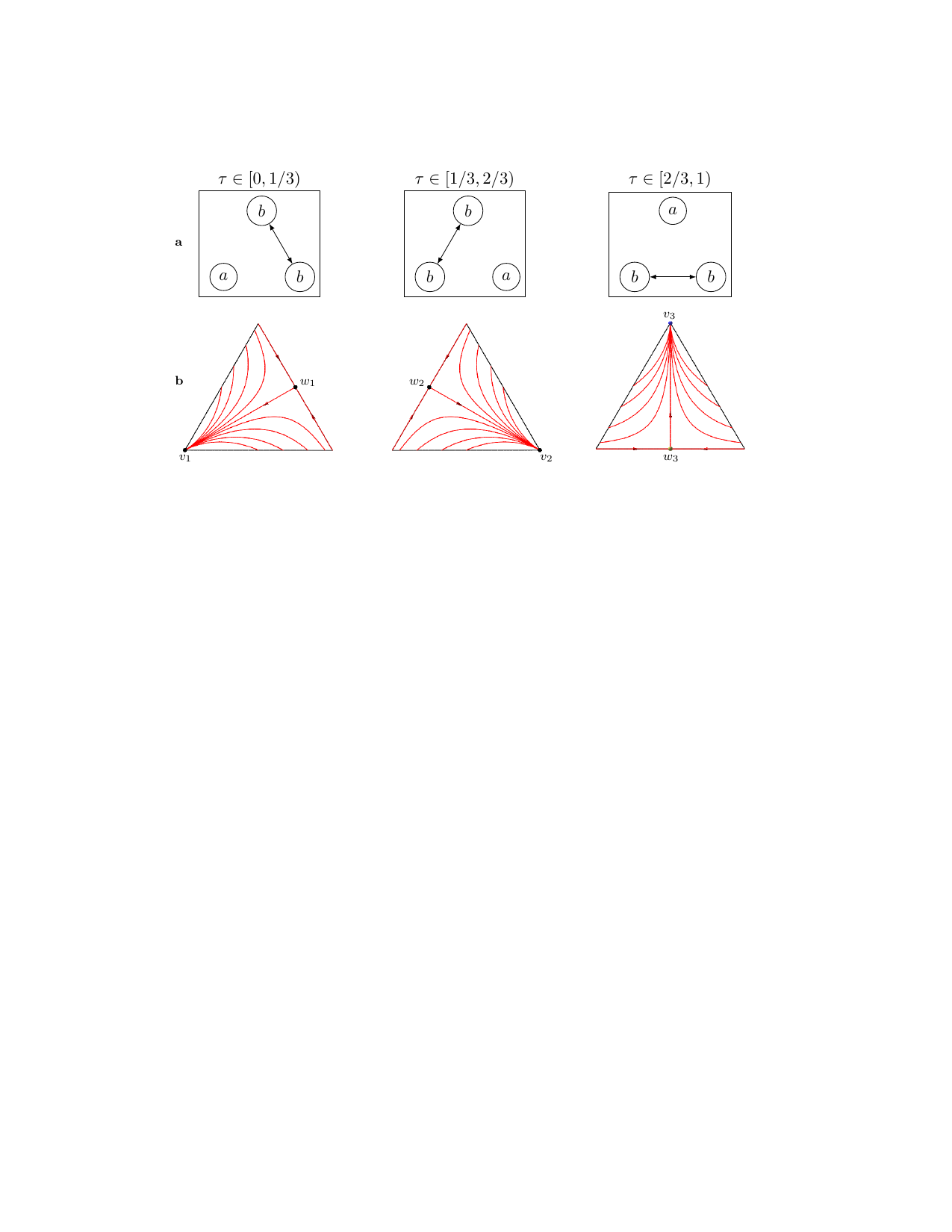}
\caption{The system is \eqref{eq47}. {\bf (a)} The migration is symmetrical between the unfavourable patches with groth rate $b\leq a$. {\bf (b)} The corresponding flow on the simplex. The condition H3.3 in Hypothesis \ref{H3} is not satisfied.
\label{figure7}}
\end{center}
\end{figure}

\begin{propo}\label{PropNoDIG}
The system \eqref{eq47}  admits a growth rate $\Lambda$ and
$$\Lambda(0,T)=\Lambda(m,0)=\frac{a+2b}{3}.$$
Moreover, Hypotheses \ref{H3} and \ref{H4} are not satisfied and we cannot use Theorems \ref{Prop14} and \ref{Prop15} to determine $\Lambda(m,\infty)$ and $\Lambda(\infty,T)$.
\end{propo}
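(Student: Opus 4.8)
The plan is first to make explicit the data encoded in Fig.~\ref{fig1}: the period is cut into three consecutive subintervals of length $1/3$, and on the $k$-th one patch $k$ has local growth rate $a$ while the other two patches have rate $b$, the symmetric migration being supported on the single edge joining the two patches whose current rate is $b$. With this reading the growth rates and migration are piecewise constant, so Hypothesis~\ref{H1} holds; and since each of the three edges is active on exactly one subinterval, the off-diagonal entries of $\overline{L}$ are all strictly positive, so $\overline{L}$ is irreducible and Hypothesis~\ref{H2} holds. The existence of $\Lambda(m,T)$ then follows from Theorem~\ref{thm2}.

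Next I would compute the two limits $\Lambda(0,T)$ and $\Lambda(m,0)$. Each patch is at rate $a$ on one subinterval of length $1/3$ and at rate $b$ on the other two, so $\overline{r}_i=(a+2b)/3$ for every $i$; Proposition~\ref{Prop12m=0} then gives $\Lambda(0,T)=\max_i\overline{r}_i=(a+2b)/3$. Since all the $\overline{r}_i$ coincide, Proposition~\ref{Prop13n} gives $\Lambda(m,0)=\overline{r}=(a+2b)/3$ for every $m\ge 0$. One can also argue directly: $\overline{R+mL}=\tfrac{a+2b}{3}I+m\overline{L}$, and $\overline{L}$ is symmetric, irreducible and Metzler with vanishing column sums, hence has spectral abscissa $0$ with Perron eigenvector $\mathbf{1}$, so $\lambda_{max}(\overline{R+mL})=(a+2b)/3$.

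The substantive part is to check that Hypotheses~\ref{H3} and \ref{H4} fail. For Hypothesis~\ref{H4}: on each subinterval the migration matrix $L(\tau)$ is supported on a single edge between the two patches $i,j$ having rate $b$, so $\ker L(\tau)=\mathrm{span}\{e_k,\,e_i+e_j\}$ is two-dimensional (here $e_1,e_2,e_3$ is the standard basis of $\mathbb{R}^3$), and the set of equilibria of \eqref{eqsimplextauL} in $\Delta$ is the whole segment $\{(1-s)e_k+\tfrac{s}{2}(e_i+e_j):s\in[0,1]\}$. In particular $p(\tau)$ is not uniquely determined and none of these equilibria is asymptotically stable, so Hypothesis~\ref{H4} is violated and Theorem~\ref{Prop15} cannot be invoked. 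For Hypothesis~\ref{H3}, using that $a>b$ in Fig.~\ref{fig1}: on the $k$-th subinterval $A(\tau)=R(\tau)+mL(\tau)$ is block diagonal, with the scalar block $a$ at patch $k$ and, at the other two patches, a symmetric $2\times 2$ block with diagonal entries $b-m$ and off-diagonal entries $m$, whose eigenvalues are $b$ and $b-2m$; hence $\lambda_{max}(A(\tau))=a$ is simple with eigenvector $v(\tau)=e_k$, and for the simplex dynamics \eqref{eqsimplextau} the equilibrium $e_k$ is asymptotically stable with basin $\{\theta\in\Delta:\theta_k>0\}$.

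The point I expect to be the crux is the compatibility condition across season boundaries demanded by Hypothesis~\ref{H3}. At $\tau=\tau_k$ the left limit is $v(\tau_k-0)=e_k$, the dominant eigenvector of the previous block-diagonal matrix, whereas the equilibrium on the next subinterval is $v(\tau_k)=e_{k+1}$, whose basin is $\{\theta\in\Delta:\theta_{k+1}>0\}$. Since $(e_k)_{k+1}=0$, the point $e_k$ lies in the face $\{\theta_{k+1}=0\}$, which is invariant under \eqref{eqsimplextau} and disjoint from the basin of $e_{k+1}$: the layer dynamics entering subinterval $k+1$ starts at $e_k$ and does not relax to the slow equilibrium $e_{k+1}$. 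Hence the uniformity-in-$\tau$ requirement of Hypothesis~\ref{H3} fails at every season boundary, Theorem~\ref{Prop14} does not apply, and neither Theorem~\ref{Prop14} nor Theorem~\ref{Prop15} can be used to obtain $\Lambda(m,\infty)$ or $\Lambda(\infty,T)$, which together with the previous steps establishes the proposition.
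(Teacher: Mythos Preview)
Your proof is correct and follows essentially the same strategy as the paper: verify Hypotheses~\ref{H1}--\ref{H2} from the piecewise-constant description, invoke Proposition~\ref{Prop12m=0} and the constancy of $\overline{r}_i$ for the two limits, observe that $\ker L(\tau)$ is two-dimensional to kill Hypothesis~\ref{H4}, and check that the left limit $v(\tau_k-0)$ lies on the invariant boundary face missed by the basin of $v(\tau_k)$ to kill Hypothesis~\ref{H3}. The only cosmetic differences are your labeling (you put the $a$-patch at index $k$ on season $k$, whereas the paper uses the order $3,2,1$) and your face-invariance argument for the basin failure, where the paper instead writes the explicit two-dimensional ODE on $\Delta$ and identifies the saddle $w_k=\tfrac12(e_i+e_j)$ to which the incoming vertex is attracted.
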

\begin{proof}
The average of the migration matrix is
$$\overline{L}=
\left[\begin{array}{ccc}
-2/3&1/3&1/3\\
1/3&-2/3&1/3\\
1/3&1/3&-2/3
\end{array}\right] 
$$
It is irreducible. Therefore, the growth rate exists. Using Proposition \ref{Prop12m=0}, 
$$\Lambda(0,T)=\max\left(\overline{r}_1,\overline{r}_2,\overline{r}_3\right)=\frac{a+2b}{3}.$$ 
Moreover, the eigenvalues of $\overline{A}$ are $\frac{a+2b}{3}$, $\frac{a+2b}{3}-m$ and $\frac{a+2b}{3}-m$. Using Theorem  \ref{Prop13}, 
$$\Lambda(m,0)=\lambda_{max}\left(\overline{A}\right)=\frac{a+2b}{3}.$$ 

Let us look at why our Hypothesis \ref{H3}  is not satisfied.
The eigenvalues of $A_3$ are $a$, $b$, and $b-2m$. Since $a\geq b$, 
$$\lambda_{max}(A_3)=a.$$
The corresponding Perron-Frobenius vector is 
$v_3=(0,0,1)$. Note that the matrix has another nonnegative eigenvector, corresponding to its eigenvalue $b$, and given by $w_2=(1/2,1/2,0)$. The differential system associated to the matrix $A_3$ on the simplex $\Delta$, parametrized by $\theta_1$ and $\theta_2$,  is
$$
\begin{array}{lcl}
\frac{d\theta_1}{dt}&=&m(\theta_2-\theta_1)+(a-b)(\theta_1+\theta_2-1)\theta_1,\\[2mm]
\frac{d\theta_2}{dt}&=&m(\theta_1-\theta_2)+(a-b)(\theta_1+\theta_2-1)\theta_2.
\end{array}
$$
It admits $v_3$ and $w_3$ as equilibria, the first being stable and the second unstable (a saddle point), see Fig. \ref{figure7}(b). Note that the lines $\theta_3=0$ and $\theta_1=\theta_2$ are invariant by the flow. The basin of attraction of $v_3$ is the subset $\theta_3>0$ of the simplex.

Similarly, if $\tau\in[0,1/3)$, we see that, the differential equation on the simplex  admits $v_1=(1,0,0)$ and $w_1=(0,1/2,1/2)$ as equilibria, and,  when $\tau\in[1/3,2/3)$, its admits $v_2=(0,1,0)$ and $w_2=(1/2,0,1/2)$ as equilibria, see Fig. \ref{figure7}(b). 
Note that $v_1$ does not belong to the basin of attraction of $v_2$. Indeed, it is attracted by $w_2$. Similarly, 
$v_2$ does not belong to the basin of attraction of $v_3$, since it is attracted by $w_3$, and $v_3$ does not belong to the basin of attraction of $v_1$, since it is attracted by $w_1$. 
The condition {{(H3.3)}} in Hypothesis \ref{H3} is not satisfied. Therefore, Theorem \ref{Prop14} cannot be used to determine the limit $\Lambda(m,\infty)$.

\begin{figure}[ht]
\begin{center}
\includegraphics[width=9cm,
viewport=170 355 430 670]{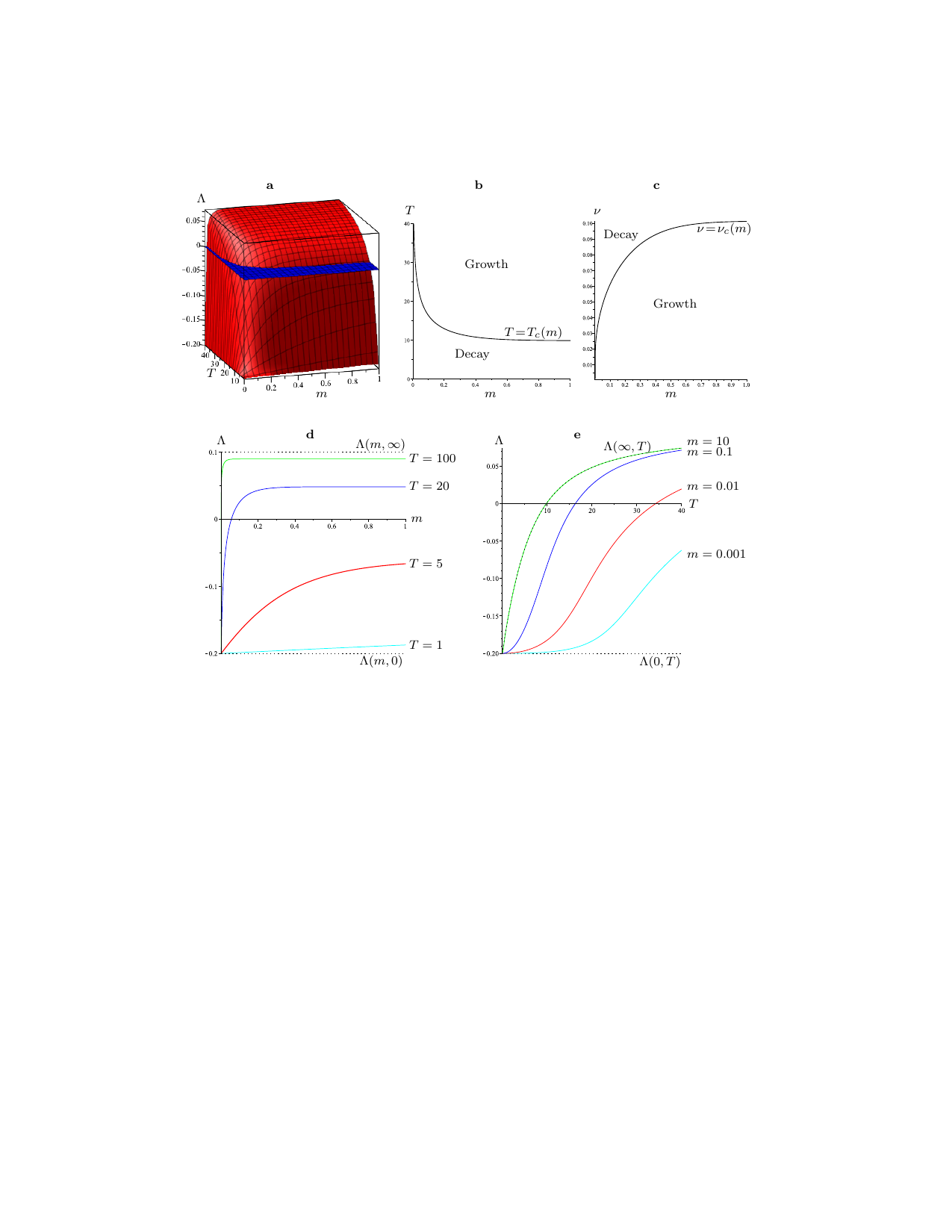}
\caption{{\bf a} The graph of $(m,T)\mapsto\Lambda(m,T)$. 
{\bf b} The set $\Lambda(m,T)=0$.
{\bf c} The set $\Lambda(m,1/\nu)=0$ in the $(m,\nu)$ parameter-plane.
{\bf d} Graphs of $m\mapsto\Lambda(m,T)$ with the indicated values of $T$.
{\bf e} Graphs of $T\mapsto\Lambda(m,T)$ with the indicated values of $m$. The limits are
$\Lambda(0,T)=\Lambda(m,0)=-0.2$, 
$\Lambda(m,\infty)=0.1$ and $\Lambda(m,\infty)$ is  given in Proposition \ref{PropES}
The system is \eqref{eq47}, with $a=1$ and $b=-0.8$.
\label{figure8}}
\end{center}
\end{figure}

The migration matrix admits $0$ as a double eigenvalue and $-2$ as a simple eigenvalue.
Its spectral abscissa is 0. It is not a simple eigenvalue. Therefore, Hypothesis \ref{H4} is not satisfied and Theorem \ref{Prop15} cannot be used to determine the limit  $\Lambda(\infty,T)$.
\end{proof}

We assume that $a>0>b$ and $a+2b<0$. Therefore $\overline{r}_1=\overline{r}_2=\overline{r}_3=\frac{a+2b}{3}<0$ and $\chi=a>0$. Does the DIG phenomenon occurs for this system ?  
In fact, DIG occurs when $a+b> 0$ and it is conjectured that it does not occur when $a+b\leq 0$.

\begin{propo}\label{PropES}
Let $\alpha=e^{\frac{a-b}{3}T}$ and
$\beta=e^{\frac{b-a}{3}T}$. We have
$$\Lambda(\infty,T)=\frac{a+2b}{3}+
\frac{1}{T}\ln\left(\frac{3/2+
\beta/4+(1/4)\sqrt{36+\beta^2+12\beta+32\alpha}}{4}\right),
$$
We also have $\Lambda(m,\infty)=\frac{a+b}{2}.
$
Therefore, if $-2b>a>-b$, DIG occurs.
\end{propo}
\begin{proof}
The proof is given in Section \ref{ProofPropES}.
\end{proof}

We illustrate our results with $a=1$ and $b=-0.8$, where its was numerically observed in \cite[Fig. 9]{BLSS} that DIG occurs.
We show in 
Fig. \ref{figure8}(a) the graph of the growth rate $\Lambda(m,T)$ and in Fig. 
\ref{figure8}(b,c) the sets where $\Lambda(m,T)=0$ or $\Lambda(m,1/\nu)=0$. The figure shows that there exists a critical curve $T=T_c(m)$,  such that $T_c(0)=T_c(m^*)=\infty$ and growth occurs if and only if $T>T_c(m)$, or $\nu<\nu_c(m)$. Note that the critical curve $v=v_c(m)$ has asymptotic behavior, of the form $m\sim e^{-k/\nu}$ near the origin.  
Using the formulas in Propositions \ref{PropNoDIG} and \ref{PropES} we obtains the limits $\Lambda(0,T)$, $\Lambda(m,0)$, $\Lambda(\infty,T)$, $\Lambda(m,\infty)$, that are plotted in dotted lines 
 in Fig. \ref{figure8}. This figure shows that the functions $m\mapsto\Lambda(m,T)$ are strictly increasing for each $T>0$ and the functions $T\mapsto\Lambda(m,T)$ also are strictly increasing for each $m>0$ fixed. 

\begin{conj} 
 We conjecture that
for all $m>0$ and $T>0$, 
$\Lambda(m,T)\leq \frac{a+b}{2}$.
\end{conj}

If this conjecture is true, DIG occurs if and only if $a+b>0$.
For complements on this example, in particular on the fact that $m$ becomes exponentially small in $1/\nu$ near the origin, see  \cite{Circuits}.

\subsubsection{Unidirectional migration}\label{atob}

We consider again the linear system with three seasons
\begin{equation}\label{eq47ab}
\frac{dx}{dt}=\left\{
\begin{array}{l}
A_1x,\mbox{ if }t\in[0,T/3),\\  
A_2x,\mbox{ if }t\in[T/3,2T/3),\\
A_3x,\mbox{ if }t\in[2T/3,T),
\end{array}
\right.
\end{equation}
where $A_1$, $A_2$ and $A_3$ are given now by
\begin{equation}\label{A1A2A3}
{A_1=\left(
\begin{array}{ccc}a-m&0&0\\
m&b&0\\
0&0&b
\end{array}
\right)
},
~ 
{A_2=
\left(
\begin{array}{ccc}
b&0&m\\
0&b&0\\
0&0&a-m
\end{array}
\right)
},
~
{A_3=
\left(
\begin{array}{ccc}
b&0&0\\
0&a-m&0\\
0&m&b
\end{array}
\right)
}.
\end{equation}

 \begin{figure}[ht]
\begin{center}
\includegraphics[width=10cm,
viewport=150 490 440 680]{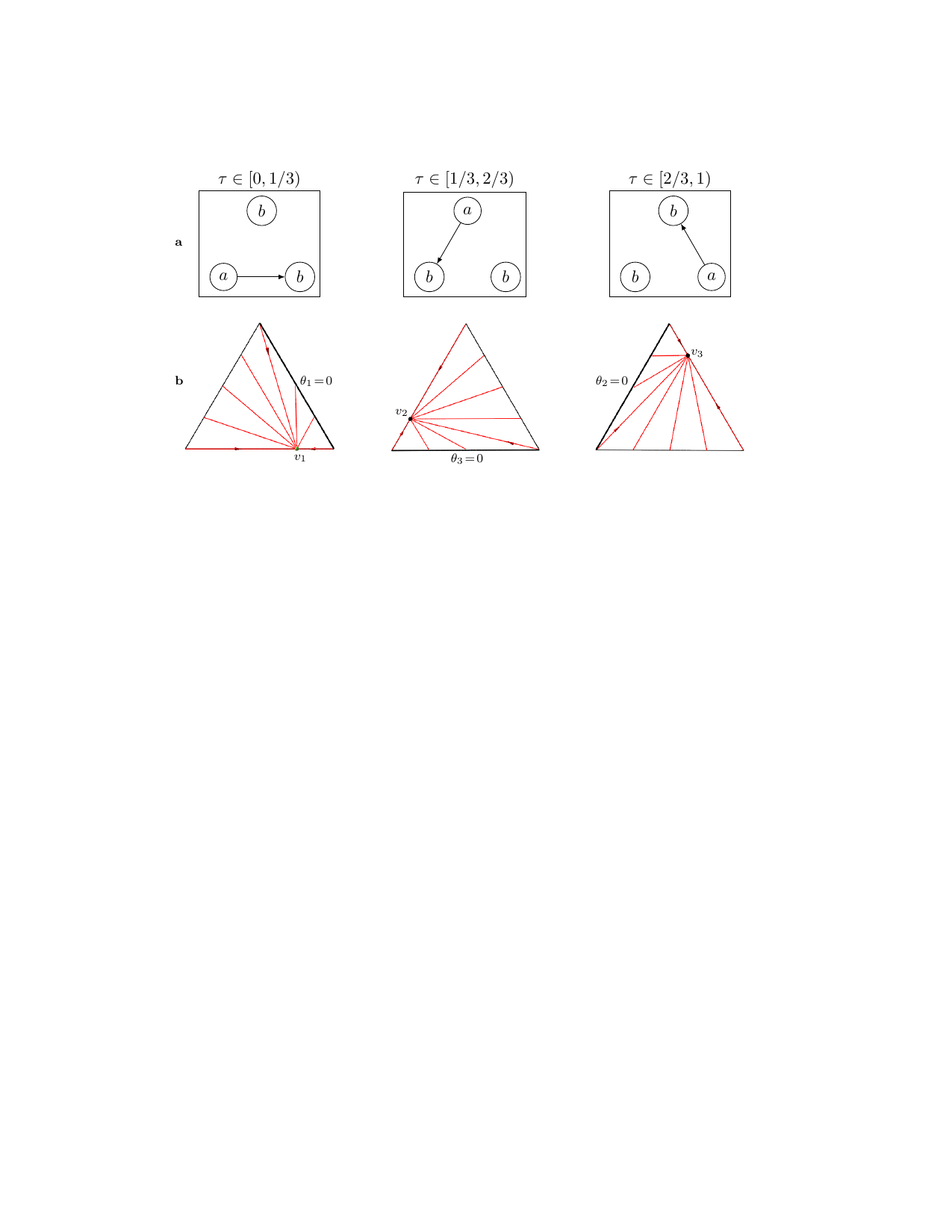}
\caption{The system is \eqref{eq47ab}. {\bf (a)} The migration is from an patch with growth equal to $a$ to a patch with growth equal to $b\leq a$. {\bf (b)} The corresponding flow on the simplex. The condition H3.3 in Hypothesis \ref{H3} is not satisfied.
\label{figure9}}
\end{center}
\end{figure}

\begin{figure}[ht]
\begin{center}
\includegraphics[width=10cm,
viewport=150 380 440 680]{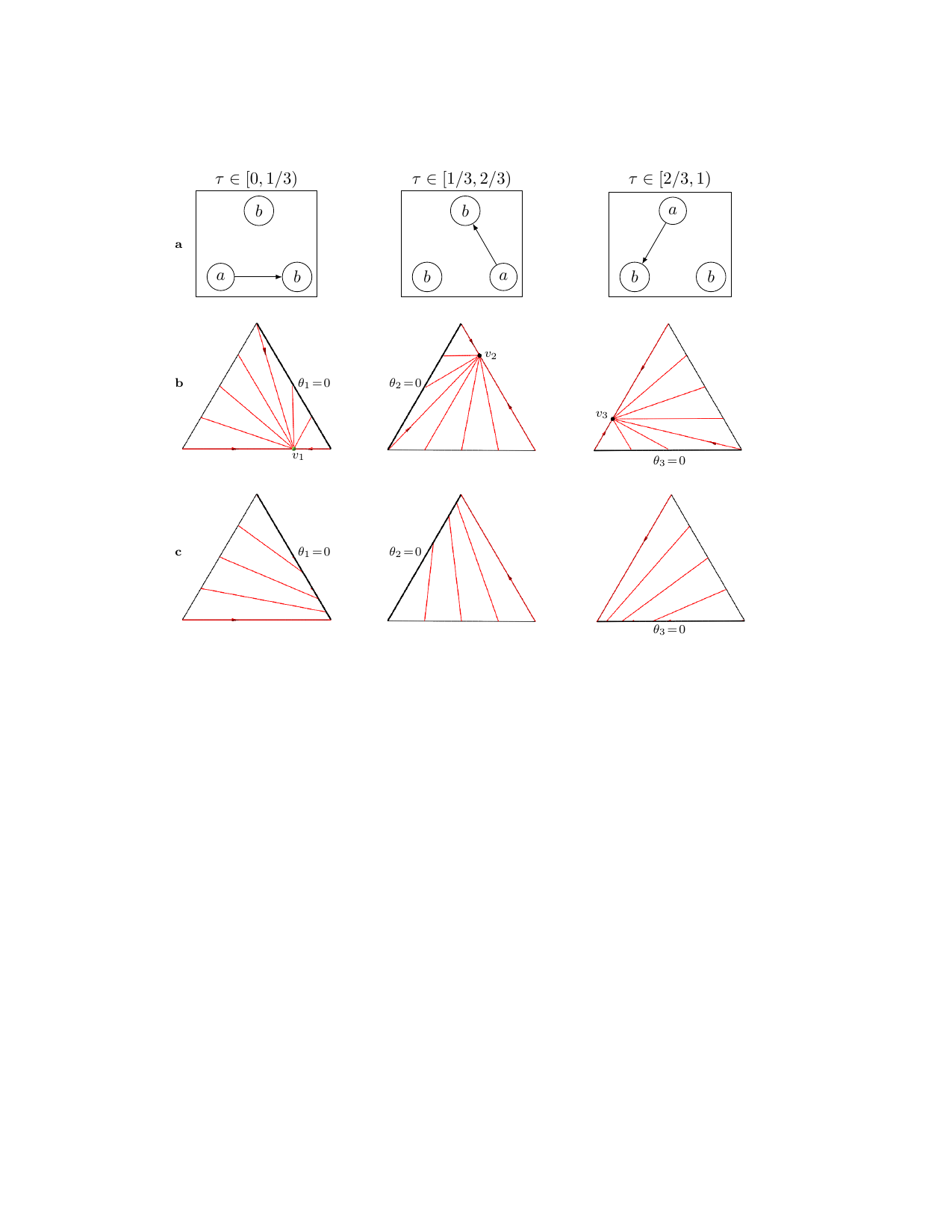}
\caption{The system is \eqref{eq47ab1}. {\bf (a)} The migration is from an patch with growth equal to $a$ to a patch with growth equal to $b\leq a$ and is obtained from  Fig. \ref{figure9}(a) by swapping seasons 2 and 3. {\bf (b)} The corresponding flow on the simplex in the case $0<m<a-b$. Hypothesis \ref{H3} is satisfied.
{\bf (c)} The flow on the simplex in the 
case $m\geq a-b$: the condition H3.1 in 
Hypothesis \ref{H3} is not satisfied.
\label{figure10}}
\end{center}
\end{figure}

This system is shown in Fig. \ref{figure9}(a). There is only one migration per season from a patch with growth equal to  $a$, to a patch with growth equal to $b$. 

We also consider the system 
\begin{equation}\label{eq47ab1}
\frac{dx}{dt}=\left\{
\begin{array}{l}
A_1x,\mbox{ if }t\in[0,T/3),\\  
A_3x,\mbox{ if }t\in[T/3,2T/3),\\
A_2x,\mbox{ if }t\in[2T/3,T),
\end{array}
\right.
\end{equation}
where $A_1$, $A_2$ and $A_3$ are also given by \eqref{A1A2A3}.  This system corresponds the the system shown in Fig. \ref{figure10}(a). Note that in Fig. \ref{figure10}(a), seasons 2 and 3 are swapped.


\begin{propo}\label{Propatob}
For the systems \eqref{eq47ab}, and \eqref{eq47ab1}
the growth rate exits  and satisfies
$$\Lambda(0,T)=\Lambda(m,0)=\frac{a+2b}{3}.$$
Moreover, Hypothesis \ref{H4} is not satisfied and we cannot use Theorem \ref{Prop15} to determine $\Lambda(\infty,T)$.
For the system \eqref{eq47ab} Hypothesis \ref{H3} is not satisfied and we cannot use Theorem \ref{Prop14}  to determine $\Lambda(m,\infty)$. For the system \eqref{eq47ab1}  Hypothesis \ref{H3} is satisfied for $0<m<b-a$ and
$\Lambda(m,\infty)=a-m.$
For $m\geq b-a$, Hypothesis \ref{H3} is not satisfied and we cannot use Theorem \ref{Prop14}  to determine $\Lambda(m,\infty)$.
\end{propo}

\begin{conj}\label{Conj1}
For the system \eqref{eq47ab} we conjecture that 
for all $T>0$, $\Lambda(\infty,T)=b$ and 
for all $m>0$, $\Lambda(m,\infty)=(a+b-m)/2$.  
\end{conj}

\begin{conj}\label{Conj2}
For the system \eqref{eq47ab1} we conjecture that 
for all $T>0$, $\Lambda(\infty,T)=b$ and
for all $ m\geq a-b$, $
\Lambda(m,\infty)=b$.
\end{conj}

\begin{proof}
The average of the migration matrix is
$$\overline{L}=\frac{1}{3}
\left[\begin{array}{ccc}
-1&0&1\\
1&-1&0\\
0&1&-1
\end{array}\right] 
$$
It is irreducible. Therefore, the growth rate exists and, using Proposition \ref{Prop12m=0}, 
$$\Lambda(0,T)=\max\left(\overline{r}_1,\overline{r}_2,\overline{r}_3\right)=\frac{a+2b}{3}.$$ 
The average of the matrix $A$ is
$$\overline{A}=\frac{1}{3}
\left[\begin{array}{ccc}
a+2b-m&0&m\\
m&a+2b&0\\
0&m&a+2b-m
\end{array}\right] 
$$
The spectral abscissa of $\overline{A}$ is $\frac{a+2b}{3}$. Therefore, using Theorem \ref{Prop13},
$$\Lambda(m,0)=\lambda_{max}\left(\overline{A}\right)=\frac{a+2b}{3}.$$

Moreover, in each season the migration matrix has eigenvalues $0$, $0$, and $-1$. Therefore its spectral abscissa 0 is not a simple eigenvalue and Hypothesis \ref{H4} is not satisfied.

Let us look at why our Hypothesis \ref{H3}  is not satisfied for the system \eqref{eq47ab}.
The eigenvalues of $A_1$ are $a-m$ and $b$ (double). The spectral abscissa of $A_1$ is 
$$\lambda_{max}(A_1)=\left\{
\begin{array}{lcl}
a-m&\mbox{if}& 0<m<a-b,\\
b&\mbox{if} &m\geq a-b.
\end{array}
\right.$$

Let us first consider the case $m<a-b$.
The corresponding Perron-Frobenius vector is 
$v_1=\left(\frac{a-b-m}{a-b},\frac{m}{a-b},0\right)$. Note that the eigenvalue $b$ admits $(0,0,1)$ and $(0,1,0)$ as eigenvectors. The differential system associated to the matrix $A_1$ on the simplex $\Delta$, parametrized by $\theta_1$ and $\theta_2$,  is
$$
\begin{array}{lcl}
\frac{d\theta_1}{dt}&=&-\theta_1((a-b)(\theta_1-1)+m),\\[2mm]
\frac{d\theta_2}{dt}&=&-\theta_1((a-b)\theta_2-m).
\end{array}
$$
It admits $v_1$ as a stable equilibrium. It also admits the set $\theta_1=0$ of non isolated equilibria, which are the eigenvectors corresponding to the eigenvalue $b$, see Figure \ref{figure9}(b). Note that the orbits are straight lines passing through $v_1$. The basin of attraction of $v_1$ is the subset $\theta_1>0$ of $\Delta$.

When $\tau\in[1/3,2/3)$, we see that the differential equation on the simplex  has a set of non-isolated equilibria, given by $\theta_3=0$ and the equilibrium 
$v_2=\left(\frac{m}{a-b},0,\frac{a-b-m}{a-b}\right)$,
whose basin of attraction is is the subset $\theta_3>0$ of $\Delta$.
Similarly, when $\tau\in[2/3,1)$, we see that, the differential equation on the simplex admits
has a set of non-isolated equilibria, given by $\theta_2=0$ and the equilibrium 
$v_2=\left(0,\frac{m}{a-b},\frac{a-b-m}{a-b}\right)$,
whose basin of attraction is is the subset $\theta_2>0$ of $\Delta$.

Note that $v_3=v(0-0)$ does not belong to the basin of attraction of $v_1=v(0)$. Indeed, it belongs to the invariant set $\theta_1=0$. Similarly, 
$v_1=v(1/3-0)$ does not belong to the basin of attraction of $v_2=v(1/3)$ and $v_2=v(2/3-0)$ does not belong to the basin of attraction of $v_3=v(2/3)$.
Therefore, Hypothesis H3 is not satisfied and Theorem \ref{Prop14} cannot be used to determine the limit $\Lambda(m,\infty)$.

\begin{figure}[ht]
\begin{center}
\includegraphics[width=9cm,
viewport=170 355 430 670]{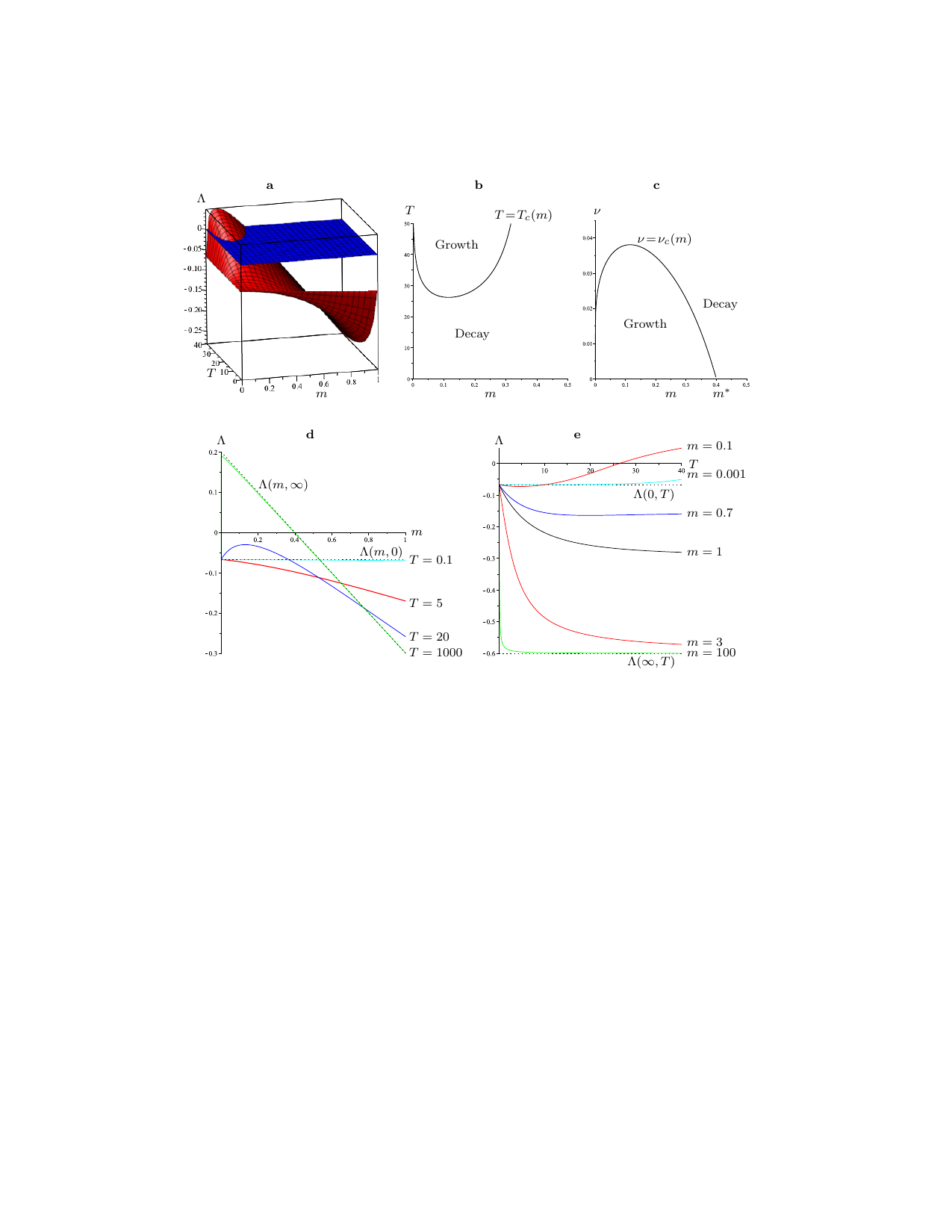}
\caption{
The system is \eqref{eq47ab}, with $a=1$ and $b=-0.6$.
{\bf a} The graph of $(m,T)\mapsto\Lambda(m,T)$. 
{\bf b} The set $\Lambda(m,T)=0$.
{\bf c} The set $\Lambda(m,1/\nu)=0$ in the $(m,\nu)$ parameter-plane.
{\bf d} Graphs of $m\mapsto\Lambda(m,T)$ with the indicated values of $T$.
{\bf e} Graphs of $T\mapsto\Lambda(m,T)$ with the indicated values of $m$.  
$\Lambda(0,T)=\Lambda(m,0)=-0.2/3$ are given in Proposition \ref{Propatob}. The limits 
$\Lambda(\infty,T)=-0.6$ and $\Lambda(m,\infty)=(0.4-m)/2$ support Conjecture \ref{Conj1}.
\label{figure11}}
\end{center}
\end{figure}

\begin{figure}[ht]
\begin{center}
\includegraphics[width=9cm,
viewport=170 355 430 670]{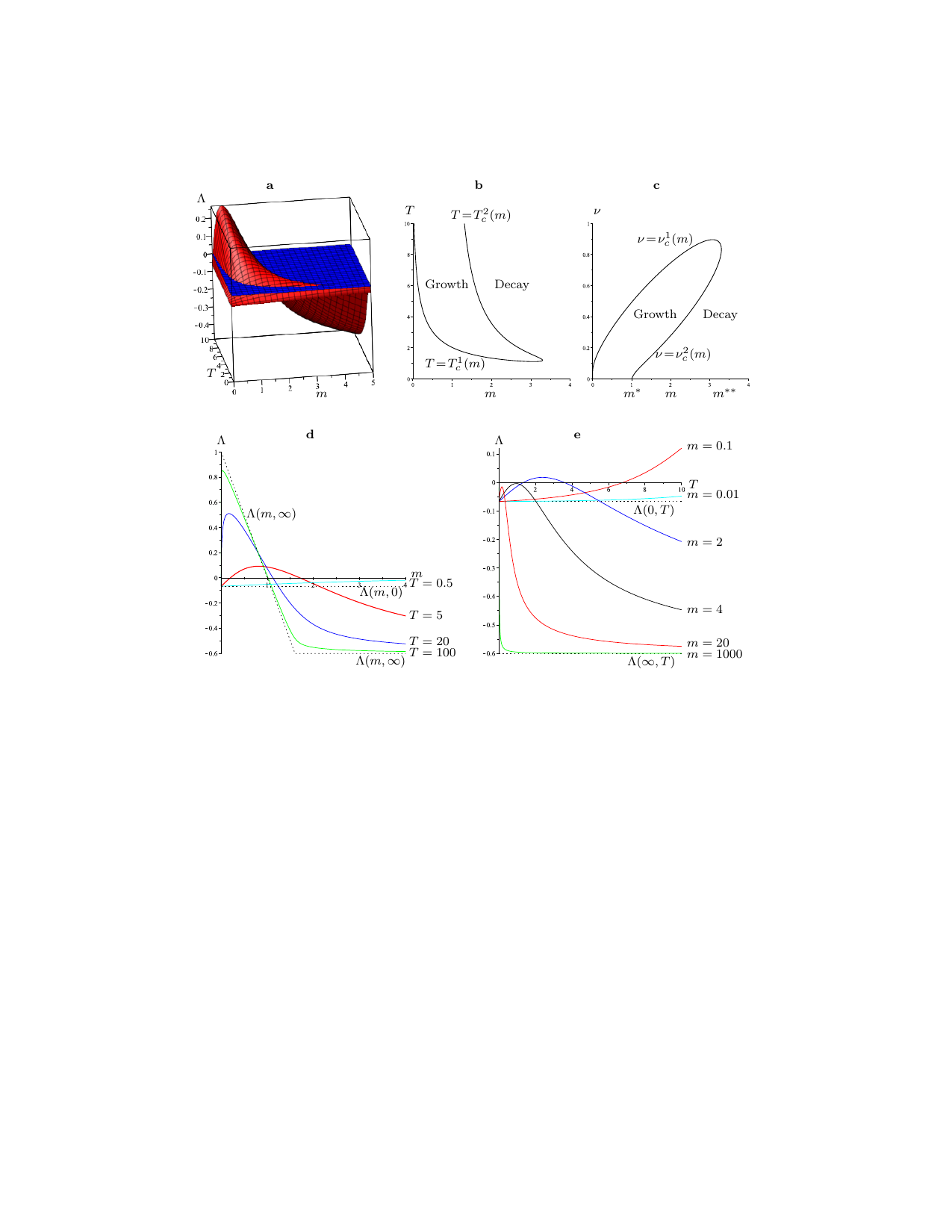}
\caption{
The system is \eqref{eq47ab1}, with $a=1$ and $b=-0.6$.
{\bf a} The graph of $(m,T)\mapsto\Lambda(m,T)$. 
{\bf b} The set $\Lambda(m,T)=0$.
{\bf c} The set $\Lambda(m,1/\nu)=0$ in the $(m,\nu)$ parameter-plane.
{\bf d} Graphs of $m\mapsto\Lambda(m,T)$ with the indicated values of $T$.
{\bf e} Graphs of $T\mapsto\Lambda(m,T)$ with the indicated values of $m$.
$\Lambda(0,T)=\Lambda(m,0)=-0.2/3$ and $\Lambda(m,\infty)=1-m$, for $m<1.6$ are given in Proposition \ref{Propatob}. The limits 
$\Lambda(\infty,T)=-0.6$ and $\Lambda(m,\infty)=-0.6$, for $m\geq 1.6$ support Conjecture \ref{Conj2}.
\label{figure12}}
\end{center}
\end{figure}

For the system \eqref{eq47ab1}, seasons 2 and 3 are swapped. The flow on the simplex is shown in Fig. \ref{figure10}(b). Hypothesis \ref{H3} satisfied. Indeed, $v_2=v(0-0)$ belongs to the basin of attraction of $v_1=v(0)$, 
$v_1=v(1/3-0)$ belongs to the basin of attraction of $v_3=v(1/3)$ and $v_3=v(2/3-0)$ belongs to the basin of attraction of $v_2=v(2/3)$.
Therefore,  using Theorem \ref{Prop14},
$$\Lambda(m,\infty)=\frac{\lambda_{max}(A_1)+\lambda_{max}(A_2)+\lambda_{max}(A_3)}{3}=\frac{a-m+a-m+a-m}{3}=a-m.$$

If $m\geq a-b$ the spectral abscissa is not a simple eigenvalue of $A_1$ and the corresponding eigenvectors are stable but not asymptotically stable, see Fig. \ref{figure10}(c). Therefore, the condition H3.1 in Hypothesis \ref{H3}  is not satisfied. 
 \end{proof}

We illustrate our results in  the case of the system shown in  
Fig. \ref{figure9}, with $a=1$ and $b=-0.6$.
We show in 
Fig. \ref{figure11} the graph of  $\Lambda(m,T)$. The figure also shows the sets where $\Lambda(m,T)=0$ or $\Lambda(m,1/\nu)=0$: there exists a critical curve $T=T_c(m)$,  such that $T_c(0)=T_c(m^*)=\infty$, with $m^*=a+b=0.4$, and growth occurs if and only if $T>T_c(m)$, or $\nu<\nu_c(m)$. Note that the critical curve $v=v_c(m)$ has asymptotic behavior, of the form $m\sim e^{-k/\nu}$ near the origin. The limits $\Lambda(0,T)$, $\Lambda(m,0)$, $\Lambda(\infty,T)$ and $\Lambda(m,\infty)$  are plotted in dotted lines 
 in Fig. \ref{figure11}(d,e), as well as the graphs of the functions $m\mapsto\Lambda(m,T)$ and $T\mapsto\Lambda(m,T)$. 
Using the formulas in Proposition \ref{Propatob} we obtain the limits $\Lambda(0,T)$ and $\Lambda(m,0)$. We cannot use 
Theorems \ref{Prop14}  
and \ref{Prop15} to determine $\Lambda(m,\infty)$ and $\Lambda(\infty,T)$.

We illustrate our results in  the case of the system shown in  
Fig. \ref{figure10}, with $a=1$ and $b=-0.6$.
We show in 
Fig. \ref{figure12} the graph of  $\Lambda(m,T)$. The figure also shows the sets where $\Lambda(m,T)=0$ or $\Lambda(m,1/\nu)=0$: there exists $m^{**}>m^*$ and two functions 
$$T_c^1:(0,m^{**})\to(0,+\infty),
\quad
T_c^2:(m^*,m^{**})\to(0,+\infty),
$$  
such that $T_c^1(0)=T_c^2(m^*)=+\infty$ and $T_c^1(m^{**})=
T_c^2(m^{**})$, and growth occurs if and only if and only $T_c^1(m)>T>T_c(m)$, or equivalently, $\nu_c^1(m)<v<v_c^2(m)$, where $\nu=1/T$ and 
$\nu_c^i(m)=1/T_c^i(m)$, for $i=1,2$. Note $m^*=1$ is the value of $m$ for which $\Lambda(m\infty)$.  Observe that the critical curve $\nu=\nu_c^1(m)$ is tangent to the $\nu$-axis ate the origin. 

The limits $\Lambda(0,T)$, $\Lambda(m,0)$, $\Lambda(\infty,T)$ and $\Lambda(m,\infty)$  are plotted in dotted lines 
 in Fig. \ref{figure12}(d,e), as well as the graphs of the functions $m\mapsto\Lambda(m,T)$ and $T\mapsto\Lambda(m,T)$. 
Using the formulas in Proposition \ref{Propatob} we obtain the limits $\Lambda(0,T)$, $\Lambda(m,0)$ and $\Lambda(m,\infty)$ for $0<m<a-b$. 
Theorem \ref{Prop14} cannot be used to determine  $\Lambda(m,\infty)$ for $m\geq a-b$. Theorem \ref{Prop15} cannot be used to determine  and $\Lambda(\infty,T)$.

\section{Discussion}\label{Discussion}

We have considered the $T$-periodic piecewise continuous linear differential system \eqref{eq1}
representing $n$  
populations of sizes $x_i(t)$, inhabiting $n$ patches, and subject to  $T$-periodic piecewise continuous local growth rates $r_i(t)$ ($1\leq i\leq n$), and migration rates $m\ell_{ij}(t)\geq 0$,  from patch $j$ to patch $i$,  where the parameter $m\geq 0$ measures the strength of migration, and 
the numbers 
$
\ell_{ij}(t)\geq 0$, $i\neq j$,
 encode the relative rates of dispersal among different patches. We extended the results in \cite{BLSS}, obtained in the case where the migration matrix $L(t)=(\ell_{ij}(t))$ is irreducible for all $t$, to the more general case where only the averaged matrix $\overline{L}$ is assumed to be irreducible.

We proved that, as soon as $m$ is positive,
$$\lim_{t\to\infty}\frac{1}{t}\ln(x_i(t))=\Lambda(m,T):=\frac{1}{T}\ln(\mu(m,T)),$$
where $\mu(m,T)$ is the Perron-Frobenius root of the monodromy matrix associated to \eqref{eq1}. Indeed, the irreducibility of $\overline{L}$ implies that this matrix has non negative entries and is irreducible.  Hence the growth rate is the same on every patch. 

Following \cite{BLSS}, our proofs rely mostly on the reduction of the system on the simplex $\Delta$. Instead of considering the size $x_i$ of the population on each patch we consider the total population $\rho  = \sum_{i=1}^n x_i$ and the proportion  $\theta_i = x_i/\rho$ on each patch. In these new variables $(\rho,\theta)$ the system has nice properties. It turns out that the system of  $\theta$ variables  is non linear, but independent of $\rho$.
We  prove then  that it has a globally asymptotically stable periodic solution $\theta^*$  from which we deduce the existence of the growth rate $\Lambda(m,T)$, and an integral expression of it, using the periodic function $\theta^*$. Moreover, on this $\theta$ system we can apply averaging, from which we can deduce our small $T$ asymptotics  of $\Lambda(m,T)$, and Tikhonov's theorem from which we deduce our large $m$ or $T$  asymptotics of $\Lambda(m,T)$.

In the case considered in \cite{BLSS}, 
the
the irreducibility of the matrix $L(\tau)$ implies that of $A(\tau)=R(\tau)+mL(\tau)$ and therefore the existence of its spectral abscissa $\lambda_{max}(A(\tau))$, and the fact that the corresponding Perron-Frobenius vector $v(\tau)$ is a GAS equilibrium of the differential equation 
\eqref{eqsimplextau} on the simplex. 
In the more general case of this paper, this property is not always satisfied and we must add Hypothesis \ref{H3}, saying that $v(\tau)$ exists and is a an asymptotically stable equilibrium of the differential equation
\eqref{eqsimplextau}
and has a basin of attraction which is uniform with respect to the parameter $\tau\in[0,1]$.
We then derive formula \eqref{mT=infini} giving the limit $\Lambda(m,\infty)$ of $\Lambda(m,T)$ when $T$ tends to infinity and the formula \eqref{m=infini,T=infini} giving the limit of $\Lambda(m,\infty)$ when $m$ tends to 0.

In the case considered in \cite{BLSS}, 
the irreducibility of the matrix $L(\tau)$ implies the existence of its Perron-Frobenius vector $p(\tau)$ and the fact that $p(\tau)$ is a GAS equilibrium of the differential equation \eqref{eqsimplextauL} on the simplex. In the more general case of this paper, this property is not always satisfied and we must add Hypothesis \ref{H4}, saying that $p(\tau)$ exists and is a an asymptotically stable equilibrium of the differential equation \eqref{eqsimplextauL} and admits a basin of attraction which is uniform in $\tau\in[0,1]$.  We then derive formula \eqref{m=infini} giving the limit of $\Lambda(m,T)$ when $m$ tends to infinity.

The formula \eqref{m=0T=infini} plays a major role in the study of the DIG phenomenon. Indeed, this formula shows that the number $\chi$ is the supremum of $\Lambda(m,T)$ and explains why DIG occurs if and only if $\chi$ is positive. 
Also, the formula \eqref{m=infini} plays a major role in the study of the DID phenomenon. Indeed, this formula shows that for a class of migration matrices, the number $\sigma$ is the infimum of $\Lambda(m,T)$ and explains why DID occurs if and only if $\sigma$ is negative. 

{Formula \eqref{m=0T=infini}, giving the limit when $m\to 0$ of the limit when $T\to\infty$ of $\Lambda(m,T)$, and Formula \eqref{m=infini}, 
giving the limit when $m\to \infty$ of $\Lambda(m,T)$,
which play a major role in the study of the DIG and DID phenomenon, require addition} 
of Hypotheses \ref{H3} and \ref{H4}, respectively. We presented in Section \ref{Numerical} several examples of how these hypotheses can be verified, as well as a case {in which they do not hold}. 
The study of the asymptotics of $\Lambda(m,T)$ when $T$ is large or $m$ is large, in the case where Hypotheses \ref{H3} or \ref{H4} are not satisfied is a major open question that will be investigated in future work.

Most of the results of \cite{BLSS} remain valid in the stochastic case, 
see \cite[Theorem 13]{BLSS}. The existence of the growth rate $\Lambda(T)$ requires only the assumption that $\overline{A}$ is irreducible \cite{BLSSarXiv}. However, in the stochastic case, the formula that gives the limit of $\Lambda(T)$ when $T$ tends to infinity is only proved in the case where for all $t$, $A(t)$ is irreducible \cite[Proposition 1.10]{BLSSarXiv}.  The validity of this formula in the more general case of Hypothesis \ref{H3}, considered in this article, is an open problem.

We considered the surprising \emph{dispersal induced growth (DIG)} phenomenon, where the populations persist and grow exponentially, despite the fact that all patches are sinks (i.e. $\overline{r}_i<0$, for all $i$) when there is no dispersal between them. We also considered the surprising \emph{dispersal induced decay (DID)} phenomenon, where the populations decay exponentially, despite the fact that all patches are sources (i.e. $\overline{r}_i>0$, for all $i$) when there is no dispersal between them. For this purpose we considered the numbers
$
\sigma=\int_0^1{\min_{1\leq i\leq n}r_i(t)}dt$ and 
$\chi=\int_0^1{\max_{1\leq i\leq n}r_i(t)}dt.
$

Consider the idealized habitat, called the \emph{ideal best habitat},  whose growth rate at  any time is that of the habitat with maximal growth at this time. Hence, $\chi$ is the average growth rate in this idealized  habitat. 
If the population does not grow exponentially in the idealized best habitat (i.e. if $\chi\leq 0$), then from Theorem \ref{Bornes2} we deduce that DIG does not occur. Moreover, thanks to 
Theorem \ref{DIGthmRed}
the population can survive if and only it would survive in the idealized best habitat.

Similarly, consider the idealized habitat, called the \emph{ideal worst habitat},  whose growth rate at any time is that of the habitat with minimal growth at this time. Hence, $\sigma$,  is the average growth rate in this idealized habitat. 
If the population {does not become extinct} in the idealized worst habitat (i.e. if $\sigma\geq 0$), then from Theorem \ref{Bornes2} we deduce that DID does not occur. Moreover, thanks to 
Theorem \ref{DIDthmRed}, if
the population {become extinct} in the idealized worst habitat, then there exist migration matrices for which it {become extinct} in the real environment with dispersion. 

Note that the DID phenomenon is less biologically significant than that of DIG. Whereas DIG is valid under very general conditions, including when dispersal is independent of time, DID requires a judicious choice of dispersal rates so that migration at any given time is biased towards the `wrong' patches. It is reasonable to assume that such migration behavior will not be selected by evolution.
However, to show the practical importance of the DID phenomenon, we can look at it from the epidemiological point of view of disease and pest control: we are looking for disease decrease, and so the fact that we need to choose migrations wisely to achieve the decrease can be interesting.

The possibility that $L(\tau)$ is not irreducible for all $\tau$  is not a simple desire for mathematical generality. Indeed, the assumption that the migration matrix is irreducible is certainly not realized in many real systems. Our study recovers in particular the case of two patches with two seasons and, during the
first season, there is migration from patch one to patch two and conversely from patch
two to one during season two, like migrating birds do between places in north or south. 
In this case the migration matrix $L(\tau)$ is not irreducible but DIG or DID can be observed as we have shown in Section~\ref{Numerical}. 
Think of birds migrating seasonally from Arctic to subtropical areas, or from Southern to Northern hemisphere; it seems likely that they would get extinct if they were fixed in either habitat, but they survive well with migrations. This example of migratory birds, although it seems to make the DIG phenomenon fairly intuitive and not surprising, doesn't fully explain all its subtleties. In fact, the examples we have highlighted show that when migration takes place solely towards the most favourable site, the DIG phenomenon occurs for all sufficiently large $m$ and $T$ (see Section \ref{MFP}), and that it also can occur even when migration takes place solely towards the least favourable site (see Section \ref{MUP}).

\appendix
\section{Proofs}\label{Proofs}

The proofs in the more general context of this paper follow the same steps as the proofs in \cite{BLSS}. The main tool in \cite{BLSS} was Perron's theorem applied to the monodromy matrix, which is positive because, in \cite{BLSS}, $A(t)$ was assumed to be irreducible for all $t\in [0,1]$. In the more general context of this paper where this assumption is replaced by the irreducibility of the average matrix $\overline{A}$, the main tool is Perron-Frobenius's theorem applied to the monodromy matrix, which is nonnegative and irreducible according to Lemma \ref{lem1}. For the sake of completeness, we give a sketch of the proofs, and we refer the reader to \cite{BLSS} for the  details.

\subsection{Proof of Theorem \ref{thm2}}
\label{ProofThm2}

Recall that the solution $x(t,x_0)$ to \eqref{eq3} such that $x(0,x_0)=x_0$ writes 
\begin{equation}\label{flot}
x(t,x_0)=X(t)x_0
\end{equation}
where $X(t)$ is the solution to the matrix valued differential equation \eqref{MVDE}.
The flow \eqref{flot} of \eqref{eq3} induces a flow on $\Delta$, given by
\begin{equation}\label{flotPsi}
\Psi(t,\theta)=\frac{X(t)\theta}{\langle X(t)\theta,{\bf 1}\rangle}.
\end{equation}
Let $\Phi:=X(1)$ be the monodromy matrix, $\mu$ its Perron-Frobenius root and $\pi$ the corresponding
{Perron-Frobenius vector}. 
Using $\Phi\pi=\mu\pi$,
$\langle \pi,{\bf 1}\rangle=1$,
 and \eqref{flotPsi},
$$\Psi(1,\pi)=\frac{\Phi\pi}{\langle \Phi\pi,{\bf 1}\rangle}=\frac{\mu\pi}{\mu\langle \pi,{\bf 1}\rangle}=\pi. $$ 
 Therefore $\pi$ is a fixed point of $\Psi(1,\theta)$, so that  $\Psi(t,\pi)$ is a $1$-periodic orbit in $\Delta$. The global stability of this orbit follows from the Perron-Frobenius projector formula
\begin{equation}\label{Conv}
\lim_{k\to\infty}\frac{\Phi^kx}{\langle\Phi^kx,{\bf 1}\rangle}= \pi,
\end{equation} 

For any $\theta_0\in\Delta$, the solution $\theta(t)$ of \eqref{eqtheta} with initial condition $\theta(0)=\theta_0$ is given by 
$\theta(t,\theta_0)=\Psi(t,\theta_0)$, where $\Psi$ is given by \eqref{flotPsi}. Using \eqref{eqrho}, and $x=\rho\theta$,
\begin{equation}\label{x(t,x0)}
x(t,x_0)=\theta\left(t,{x_0}/{\rho_0}\right)\rho_0e^{\int_0^t\langle A(s)\theta\left(s,{x_0}/{\rho_0}\right),{\bf 1}\rangle ds},
\end{equation}
where $\rho_0=\langle x_0,{\bf 1}\rangle$.
Using \eqref{x(t,x0)},  the Lyapunov exponent of the components of any solution $x(t,x_0)$ can be computed as follows:
\begin{align*}
\lim_{t\to\infty}\frac{1}{t}\ln(x_i(t,x_0))=\lim_{k\to\infty}\frac{1}{k}\ln(x_i(k,x_0))=
\lim_{k\to\infty}\frac{1}{k}\left[
\ln(\theta_i(k,x_0/\rho_0)\rho_0)+
\int_0^{k}U(s) ds\right],
\end{align*}
where $U(s)=\langle A(s)\theta\left(s,{x_0}/{\rho_0}\right),{\bf 1}\rangle$. Using the global asymptotic stability of the periodic orbit $\theta^*(t):=\Psi(t,\pi)$, 
\begin{align*}
\lim_{t\to\infty}\frac{1}{t}\ln(x_i(t,x_0))&=
\lim_{k\to\infty}\frac{1}{k}\int_{k_1}^{k}
\langle A(s)\theta^*(s),{\bf 1}\rangle ds\\
&=\lim_{k\to\infty}\frac{k-k_1}{k}\int_{0}^{1}
\langle A(s)\theta^*(s),{\bf 1}\rangle ds
=\int_0^1\langle A(t)\theta^*(s),{\bf 1}\rangle ds.
\end{align*}
For the details, we refer the reader to the proof of \cite[Theorem 25]{BLSS}.
This proves the first equality in \eqref{Lambda=Lambda[rho]1}. 
 Let $x(t)=X(t)\pi$ be the solution of  \eqref{eq3}, such that $x(0)=\pi$. Since  $x(1)=\Phi\pi=\mu\pi$, we have $x(k)=\Phi^k\pi=\mu^k\pi$. Hence
 $$\lim_{k\to\infty}\frac{1}{k}\ln(x_i(k))=\ln(\mu)=\Lambda,$$
 which proves the second equality in \eqref{Lambda=Lambda[rho]1}.

 \subsection{Proof of Theorem \ref{Prop13}}
\label{ProofProp13}
The proof is the same as the proof in \cite[Eq. (12)]{BLSS}.
Indeed, the proof  in \cite{BLSS} only uses the averaging theorem, see \cite[Theorem 17]{BLSS}, and the fact the matrix $\overline{A}$ is irreducible. 

Since the matrix  
$\overline{A}$ is irreducible, its 
Perron-Frobenius vector $w=(w_1,\cdots,w_n)^\top$, corresponding to its spectral abscissa $\lambda_{max}(\overline{A})$, is a globally asymptotically stable equilibrium of the averaged equation of \eqref{eqtheta} on the simplex $\Delta$ 
$$
\begin{array}{lcl}
\frac{d\theta}{dt}&=&\overline{A}\theta- \langle \overline{A}\theta,
{\bf 1}\rangle \theta.
\end{array}
$$
Using the averaging theorem, we deduce that, as $T\to 0$, the $T$-periodic solution $\theta^*(t,T)$ of \eqref{eqtheta} converges toward $w$.
Hence,  using \eqref{Lambda=Lambda[rho]1}, as $T\to 0$, 
$$\Lambda(T)=\int_0^{1}
\langle A(\tau)\theta^*(T\tau,T),{\bf 1}\rangle 
d\tau
=
\int_0^{1}
\langle A(\tau)w,{\bf 1}\rangle 
d\tau+o(1)=\lambda_{max}\left(\overline{A}\right)+o(1).
$$
For the details, we refer the reader to  \cite[Section 5.3]{BLSS}. This proves  \eqref{T=0}.
 
\subsection{Proof of Theorem \ref{Prop14}}
\label{ProofProp14}

The proof follows the same steps as the proof of \cite[Eq. (13)]{BLSS}.
Indeed, the result in \cite{BLSS} only uses
Tikhonov's theorem on singular perturbations, see \cite[Proposition 27]{BLSS}, and the fact that the Perron-Frobenius vector $v(\tau)$ of $A(\tau)$ is globally asymptotically stable for  \eqref{eqsimplextau}. Note that, in the case where $A(\tau)$ is irreducible for all $\tau\in[0,1]$, we have $v(\tau)\gg 0$, and its global asymptotic stability in the simplex $\Delta$ is guaranteed, see \cite[Proposition 24]{BLSS}. In the more general case where only the average matrix $\overline{A}$ is assumed to be irreducible, we need to introduce Hypothesis \ref{H3}.

We use the change of variable $\tau=t/T$ and $\eta(\tau)=\theta(T\tau)$. The equation \eqref{eqtheta} becomes
\begin{equation}\label{ztau}
\frac{1}{T}\frac{d\eta}{d\tau}=
A(\tau)\eta- \langle A(\tau )\eta,1\rangle\eta
\end{equation}

Theorem \ref{ThmThik}, in Appendix \ref{SingPert}, applies to the equation \eqref{ztau} 
which can then be written, noting $\varepsilon=1/T$, as
\begin{equation*}
\varepsilon\frac{d\eta}{d\tau}=
A(\tau)\eta- \langle A(\tau )\eta,1\rangle\eta
\end{equation*}
Using the fast time $t=\varepsilon\tau$, the fast equation is the equation \eqref{eqsimplextau}, 
where $\tau\in[0,1)$ is considered as a parameter. Using Hypothesis \ref{H3}, the fast equation admits the Perron-Frobenius vector $v(\tau)$ of $A(\tau)$ as an equilibrium which has a basin of attraction which is uniform.
Therefore, the conditions (SP2) are satisfied. Hence, $\eta(\tau,\varepsilon)$ is approximated by the slow curve $v(\tau)$ excepted on the set  $\bigcup_{k=0}^p[\tau_k,\tau_k+\nu]$, where $\nu$ is as small as we want.

Therefore, for any $\nu>0$, as small as we want, as $T\to \infty$,
the unique $T$-periodic solution $\eta^*(t,T)$ of  \eqref{eqtheta}   satisfies  
$$\eta^*(\tau,T)=v(\tau)+o(1)
\mbox{ uniformly on }[0,1]\setminus \bigcup_{k=1}^p[\tau_k,\tau_k+\nu],$$
where $\tau_0=0$ and $\tau_k$, $1\leq k\leq p$, are the discontinuity points of $A(\tau)$.
From this formula and $\theta^*(T\tau,T)=\eta^*(\tau,T)$ we deduce that
$$\theta^*(T\tau,T)=v(\tau)+o(1)
\mbox{ uniformly on }[0,1]\setminus \bigcup_{i=1}^p[\tau_k,\tau_k+\nu].$$ 
Since $\nu$ can be chosen as small as we want, as $T\to\infty$, using \eqref{Lambda=Lambda[rho]1}, 
$$\Lambda(T)=\int_0^{1}
\langle A(\tau)\theta^*(T\tau,T),{\bf 1}\rangle 
d\tau
=
\int_0^{1}
\langle A(\tau)v(\tau),{\bf 1}\rangle 
d\tau+o(1)=\overline{\lambda_{max}(A)}+o(1).
$$
For the details, we refer the reader to  \cite[Section 5.4]{BLSS}.
This proves \eqref{T=infini}.

\subsection{Proof of Proposition \ref{Lreducible}}
\label{ProofLreducible}
Since the columns of $L$ sum to 0, ${\bf 1}^\top L=0$, where ${\bf 1}=(1,\ldots,1)^\top$. Therefore $L^\top{\bf 1}=0$, which means that 0 is an eigenvalue of $L^\top$. Therefore, 0 is an eigenvalue of $L$.

For the rest of the proof, we begin by considering the case where $L$ is irreducible. Let $s=\max_{i}(-\ell_{ii})$ and let $B$ be the matrix $B=L+sI$. Note that $s$ is an eigenvalue of $B^\top$ and ${\bf 1}$ is a corresponding  eigenvector. The matrix $B^\top$ is non-negative and irreducible. By the Perron-Frobenius theorem, its spectral radius $\rho\left(B^\top\right)$ is a simple eigenvalue of 
$B^\top$ and it is the only eigenvalue of $B^\top$ which admits a positive eigenvector. 
This prove that $\rho\left(B^\top\right)=s$. Therefore, 
$\rho\left(B\right)=s$ and $s$ is a simple eigenvalue of $B$, admitting a positive eigenvector, which means that 0 is a simple eigenvalue of $L$ and it admits an eigenvector in $\Delta$. Moreover, all other eigenvalues of $B$ have modulus $<s$, which proves that all other eigenvalues of $L$ have real part $<0$. This proves the result when $L$ is irreducible and that $k=1$, in this case. 

 Assume now that $L$ is reducible.
By means of a permutation, we can put it into the following triangular block form

\begin{equation}
\label{red}
L=\left(
\begin{array}{cc}
M&0\\
N&P
\end{array}
\right)
\end{equation}
where $M$ and $P$ are square matrices.
If one of the matrices $M$ or $P$ is reducible, then by a permutation of its lines and columns, it can also be represented in a triangular block form similar to \eqref{red}. 
If one of the new diagonal blocks is reducible, then the process can be continued. Finally, by a suitable permutation of lines and columns, we can reduce $L$ to a triangular block form
\begin{equation}
\label{reds}
L=\left(
\begin{array}{cccc}
L_{11}&0&\cdots&0\\
L_{21}&L_{22}&\cdots&0
\\
\vdots&\vdots&\ddots&\vdots\\
L_{s1}&L_{s2}&\cdots&L_{ss}
\end{array}
\right),
\end{equation}
where 
the matrices $L_{11},\cdots L_{ss}$ are irreducible. Since $L$ is Metzler, the diagonal matrices $L_{ii}$ are also Metzler and the blocks $L_{ji}$, $j>i$, are non negative. Therefore, since the columns of $L$ sum to 0, the diagonal elements of the diagonal blocks are diagonally dominant in their columns, i.e. for any $p\in R_i$
\begin{equation}
\label{dominant}
|\ell_{pp}|=\sum_{j\neq p}\ell_{jp}\leq
\sum_{j\neq p,j\in R_i}\ell_{jp}=:\alpha_p(L_{ii}),
\end{equation}
where $\{1,\ldots,n\}=R_1\cup R_2\cup\ldots\cup R_s$ is 
the partition of the set $\{1,\ldots,n\}$ corresponding to the triangular block form \eqref{reds}.
Since the spectrum of $L$ satisfies
\begin{equation}
\label{spectre}
\sigma(L)=\sigma(L_{11})\cup\cdots\cup\sigma(L_{ss}),
\end{equation}
and $0\in\sigma(L)$, some $L_{ii}$ is singular. Let 
$$K=\left\{i\in\{1,\ldots,s\}:L_{ii}\mbox{ is singular}\right\},$$
and $k$, $1\leq k\leq s$, be the number of elements of $K$. 
Let us prove that for any $i\in K$, 
the blocks $L_{ji}$, that appear below $L_{ii}$ in the $i$th column of  \eqref{reds} are 0: for all $i<j\leq s$,  
\begin{equation}\label{Lji}
 L_{ji}=0.
\end{equation} 
Assume that some $L_{ji}\neq 0$ and let $p\in R_i$ and $q\in R_j$ such that 
$\ell_{qp}> 0$. Therefore, the inequality \eqref{dominant} is strict for $p$. It is well known that if an irreducible matrix, which is diagonally dominant, i.e. \eqref{dominant} holds, and if strict inequality holds in \eqref{dominant} for at least one $p$, then the matrix is nonsingular \cite[Theorem II]{Taussky} or \cite[Theorem 1.11]{Varga}.
Therefore, $L_{ii}$ is  non singular, a contradiction with $i\in K$.

Using \eqref{Lji}, we conclude that the columns of $L_{ii}$ sum to 0. Therefore, using the Perron-Frobenius theorem, as we did in the first part when we assumed that the matrix $L$ is irreducible, we can prove that
for each $i\in K$, 0 is a simple eigenvalue of $L_{ii}$, it admits an positive eigenvector, and all other eigenvalues of $L_{ii}$ are of negative real part. 

Now we consider $i\notin K$. The matrix $L_{ii}$ is not singular. Its eigenvalues belong to the Ger\v{s}gorin disks, centred at $\ell_{pp}$ and of radius 
$\alpha_p(L_{ii})$, where 
$\alpha_p(L_{ii})$ is the sum of the off diagonal elements of the $p$th column of $L_{ii}$ 
\cite[Theorem 1.1]{Varga}. Since $\ell_{pp}\leq 0$ and \eqref{dominant} holds, these disks are 
in the complex half-plane with a non positive real part.
Therefore, they can intersect the imaginary axis at most at the origin. But we know that 0 is not an eigenvalue of $L_{jj}$. Therefore, all eigenvalues of $L_{jj}$ are of negative real part. Using \eqref{spectre} we conclude that $k$ is the algebraic multiplicity of 0 and all other eigenvalues of $L$ are of negative real part.

Now we prove that $L$ admits $k$ linearly independent eigenvectors that can be chosen in the simplex $\Delta$.
As said before, if $i\in K$, $L_{ii}$ admits
an eigenvector $q\gg0$, such that $\|q\|_1=1$.
We obtain an eigenvector $p\in\Delta$ of $L$ by completing $q$ by 0 on the sets $R_j$, $j\neq i$. More precisely we define
$$p_j=\left\{
\begin{array}{lcl}
q_j&\mbox{if}&j\in R_i\\
0&\mbox{if}&j\notin R_i
\end{array}
\right.$$
Using \eqref{Lji}, we obtain
$Lp=0$. 
Therefore the geometric multiplicity of 0 is also equal to $k$ and the $k$ independent eigenvectors can be chosen in $\Delta$.

\begin{rem}
The sets $R_i$, $i\in K$ are known in Markov chains theory as the \emph{ergodic class} of states, while the sets $R_i$, $i\notin K$ correspond to the \emph{transient class} \cite[Page 695]{MeyerBook}. The positive  eigenvectors $p\in\Delta$ correspond to the \emph{ergodic measures} of the Markov chain. In their extension of the Perron-Frobenius theorem to reducible non negative matrices, Berman and Plemmons \cite[Theorem 3.20]{Berman} called the sets $R_i$, $i\in K$, \emph{final class}, see \cite[Definition 3.8]{Berman}.
\end{rem}

\subsection{Proof of Theorem \ref{Prop15}}
\label{ProofProp15}

The proof follows the same steps as the proof of \cite[Eq. (15)]{BLSS}. Indeed,
the result in \cite{BLSS} only uses
Tikhonov's theorem and the fact that 
the Perron-Frobenius vector $p(\tau)$ of $L(\tau)$ is globally asymptotically stable for  \eqref{eqsimplextauL}. Note that, in the case where $L(\tau)$ is irreducible for all $\tau\in[0,1]$, we have $p(\tau)\gg 0$, and its global asymptotic stability in the simplex $\Delta$ is guaranteed, see \cite[Proposition 24]{BLSS}. In the more general case where only the average matrix $\overline{L}$ is assumed to be irreducible, we need to introduce Hypothesis \ref{H4}.

 Using the decomposition $A(\tau)=R(\tau)+mL(\tau)$, the equation \eqref{eqtheta} on the simplex $\Delta$ is
 \begin{equation}\label{eqt0}
\frac{d\theta}{dt}=R(t/T)\theta+mL(t/T)\theta-\langle
R(t/T)\theta,{\bf 1}\rangle\theta-
m\langle L(t/T)\theta,{\bf 1}\rangle\theta,
\end{equation}
Since the columns of $L(\tau)$ sum to 0, 
$\langle L(\tau)\theta,{\bf 1}\rangle=0$. Therefore, using the variables $\tau=t/T$ and $\eta(\tau)=\theta(T\tau)$, this equation is written
\begin{equation}\label{ztau0}
\frac{1}{m}\frac{d\eta}{d\tau}=TL(\tau)\eta
+\frac{1}{m}\left[TR(\tau)\eta-T\langle
R(\tau)\eta,{\bf 1}\rangle\eta\right].
\end{equation}

Theorem \ref{ThmThik}, in Appendix \ref{SingPert}, applies to the equation \eqref{ztau0}, which can be written, noting $\varepsilon=1/m$, as
\begin{equation*}
\varepsilon\frac{d\eta}{d\tau}=TL(\tau)\eta
+\varepsilon\left[TR(\tau)\eta-T\langle
R(\tau)\eta,{\bf 1}\rangle\eta\right].
\end{equation*}
Using the fast time $s=m\tau$, the fast equation is the equation \eqref{eqsimplextauL},
where $\tau$ is considered as a parameter. 
 Using Hypothesis \ref{H4}, the Perron-Frobenius vector $p(\tau)$ of $L(\tau)$, is GAS in $\Delta$  for \eqref{eqsimplextauL}, see Lemma \ref{lemma10}. Therefore, the conditions (SP2) are satisfied. Hence, $\eta(\tau,\varepsilon)$ is approximated by the slow curve $p(\tau)$ excepted on the set  $\bigcup_{k=0}^p[\tau_k,\tau_k+\nu]$, where $\nu$ is as small as we want.

Therefore, for any $\nu>0$, as small as we want, as $T\to \infty$, the unique $T$-periodic solution $\theta^*(t,T)$ of \eqref{eqt0}  satisfies  
$$\theta^*(T\tau,m)=p(\tau)+o(1)
\mbox{ uniformly on }[0,1]\setminus \bigcup_{i=1}^p[\tau_k,\tau_k+\nu],$$
where $\tau_0=0$ and $\tau_k$, $1\leq k\leq p$, are the discontinuity points of $A(\tau)=R(\tau)+mL(\tau)$.  
From \eqref{Lambda=Lambda[rho]1}, 
$\Lambda(m,T)=\int_0^{1}
\langle R(\tau)\theta^*(T\tau,T),{\bf 1}\rangle 
d\tau$.
Since $\nu$ can be chosen as small as we want, as $m\to\infty$, 
$$
\begin{array}{l}
\int_0^{1}\langle R(\tau)\theta^*(T\tau,T),{\bf 1}\rangle d\tau
=
\int_0^{1}\langle R(\tau)p(\tau),{\bf 1}\rangle d\tau+o(1)
=\sum_{i=1}^n\overline{p_ir_i}+o(1).
\end{array}
$$
For the details, we refer the reader to  \cite[Section 5.6]{BLSS}.
This proves \eqref{m=infini}.

\subsection{Proof of Proposition \ref{PropES}}
\label{ProofPropES}
The monodromy matrix of \eqref{eq47} is  given by
$
X(m,T) = e^{\frac{T}{3} A_3}e^{\frac{T}{3} A_2}e^{\frac{T}{3} A_1}.
$
We have
\[
e^{t A_1} = \begin{pmatrix}
e^{a t} & 0 & 0\\
0 & e^{b t}F & e^{bt}G\\
0 & e^{bt}G & e^{bt}F
\end{pmatrix},
~
e^{t A_2} = \begin{pmatrix}
e^{b t}F & 0 & e^{bt}G\\
0 & e^{a t} & 0 \\
e^{bt}G & 0& e^{bt}F
\end{pmatrix},
~
e^{t A_3} = \begin{pmatrix}
e^{b t}F & e^{bt}G& 0\\
e^{bt}G & e^{bt}F& 0\\
 0 & 0& e^{a t}\\
\end{pmatrix},
\]
where 
\begin{equation}
\label{FG}
F= \frac{1}{2}(1 + e^{- 2 m t}),\quad
G = \frac{1}{2}(1 - e^{- 2 m t}).
\end{equation}
A straightforward calculation shows that
$$X(m,T)=e^{\frac{a+2b}{3}T}Y(m,T),$$ 
where
\[
Y(m,T) =  
\begin{pmatrix}
F^2& FG+\beta FG^2 & G^2+\beta F^2G\\
FG & F^2+\beta G^3 &
FG+\beta FG^2 \\
\alpha G & FG & F^2 
\end{pmatrix}.
\]
where 
$\alpha=e^{\frac{a-b}{3}T}$,
$\beta=e^{\frac{b-a}{3}T},$
and $F$ and $G$, defined by \eqref{FG}, are evaluated for $t=T/3$. 
The Perron root of $X(m,T)$ is given by
$$\mu(m,T)=e^{\frac{a+2b}{3}T}\lambda_{max}(Y(m,T)).$$
Therefore, the growth rate is given by
$$
\Lambda(m,T)=\frac{1}{T}\ln(\mu(m,T))=
\frac{a+2b}{3}+\frac{1}{T}\ln(\lambda_{max}(Y(m,T))).
$$
From the expressions \eqref{FG} of $F$ and $G$ we deduce that
\begin{equation}\label{Y(m,T)}
Y(m,T)=B(T)+e^{-2mT/3}Z(m,T) 
\end{equation}
where
\[
B(T)= \frac{1}{4}\begin{pmatrix}
1& 1+\frac{\beta}{2} & 1+\frac{\beta}{2} \\
1 & 1+\frac{\beta}{2} &1+\frac{\beta}{2} \\
2\alpha & 1 & 1 
\end{pmatrix},
\]
and $Z(m,T)$ is a bounded matrix.
Therefore, using the continuity of the spectral abscissa, for each fixed $T>0$, as $m\to\infty$, 
$$\lambda_{max}(Y(m,T))=\lambda_{max}(B(T))+o(1).$$
We can check that 
\[
\lambda_{\max}(B(T)) = \frac{3/2+
\beta/4+(1/4)\sqrt{36+\beta^2+12\beta+32\alpha}}{4},
\]
Therefore, as $m\to\infty$
$$\Lambda(m,T)=\frac{a+2b}{3}+\frac{1}{T}\ln
\left(\frac{3/2+
\beta/4+(1/4)\sqrt{36+\beta^2+12\beta+32\alpha}}{4}\right)+o(1).
$$
Using again \eqref{Y(m,T)} and the continuity of the spectral abscissa, for each fixed $m>0$, as $T\to\infty$, 
$$\lambda_{max}(Y(m,T))=\lambda_{max}(B(T))+o(1).$$
Since $a>b$, as $T\to\infty$, we have
\[
\lambda_{\max}(B(T)) = e^{\frac{a-b}{6}T}\left(\sqrt{2}/4+O\left(e^{\frac{b-a}{6}T}\right)\right)
\]
Therefore, as $T\to\infty$, 
$$\Lambda(m,T)=
\frac{a+2b}{3}+\frac{a-b}{6}+o(1)=
\frac{a + b}{2} + o(1).$$

\section{Singular perturbations}\label{SingPert}
Consider the singularly perturbed differential equation
\begin{equation}
\label{E1}
\varepsilon\frac{d\eta}{d\tau}=f(\tau,\eta,\varepsilon),
\end{equation}
where $f:[0,1]\times\mathbb{R}^n\times(0,\infty)\to\mathbb{R}^n$ is a function, verifying the conditions $(SP1)$  and $(SP2)$ that will be specified below.

When $\varepsilon\to 0$, \eqref{E1} is a singularly perturbed equation with  $n$ fast variables $\eta$ and one slow variable $\tau$. 
Using the fast time $s=\tau/\varepsilon$, this equation can be rewritten
\begin{equation}
\label{E2}
\begin{array}{rl}
\frac{d\eta}{ds}&=f(\tau,\eta,\varepsilon),\\
\frac{d\tau}{ds}&=\varepsilon.
\end{array}
\end{equation}
The slow curve of \eqref{E2} is given by $\eta=\xi(\tau)$, where $\xi(\tau)$ is the equilibrium of the fast equation  
\begin{equation}
\label{E3}
\frac{d\eta}{ds}=f(\tau,\eta,0),
\end{equation}
obtained by letting 
$\varepsilon=0$ in \eqref{E2}. Therefore, in the fast equation \eqref{E3}, $\tau$ is considered as a constant parameter. We make the following assumptions. 

\begin{description}
\item[(SP1)] There is a finite set 
$D=\{\tau_k, 1\leq k\leq p:  0< \tau_1<\cdots<\tau_p<1\}$, such that $f$ is continuous on $\left([0,1]\setminus D\right)\times\mathbb{R}^n\times(0,\infty)$, differentiable with respect of $\eta$, and has right and left limits at the discontinuity points $\tau_k\in(0,1)$, $k=1,\ldots,p$.
\item[(SP2)] 
The equilibrium $\eta=\xi(\tau)$ of the fast equation \eqref{E3} is asymptotically stable with a basin of attraction which is uniform with respect on $\tau\in[0,1)$. This means that:\\
{\it(SP2.1)} For each $\tau\in[0,1)$, $\xi(\tau)$ is an asymptotically stable equilibrium of \eqref{E3}.\\
{\it(SP2.2)}  
There exists $\delta>0$ such that for each $\tau\in[0,1)$, the $\delta$-radius ball, centred at $\xi(\tau)$, is included in the basin of attraction of $\xi(\tau)$.\\
{\it(SP2.3)} At the points $\tau_k$ of discontinuity, the basin of attraction of $\xi(\tau_{k})$ contains the limit at right 
$\xi(\tau_{k}-0):=\lim_{\tau\to \tau_k,\tau<\tau_k}\xi(\tau).$
\end{description}

Using the Tikhonov's theorem on singular perturbations we have the following result.

\begin{theorem}\label{ThmThik}
Assume that the conditions (SP1) and (SP2) are satisfied. Assume that $\eta_0$ belongs to the basin of attraction of $\xi(0)$.
Let $\nu>0$. For $\varepsilon$ small enough, the solution $\eta(\tau,\varepsilon)$ of \eqref{E1} with initial condition $\eta(0,\varepsilon)=\eta_0$, is defined on $[0,1]$ and, as  $\varepsilon\to 0$,
$$
\begin{array}{ll}
\eta(\tau,\varepsilon)=\xi(\tau)+o(1)
&\mbox{ uniformly on } [0,1]\setminus \bigcup_{k=0}^p[\tau_k,\tau_k+\nu],
\end{array} 
$$
where $\tau_0=0$ and $\tau_k$, $1\leq k\leq p$, are the discontinuity points of $f$.
\end{theorem}

\begin{proof}
The result is a particular case of \cite[Proposition 27]{BLSS}.
\end{proof}

\section*{Acknowledgements} We warmly thank the handling editor and the anonymous reviewer whose constructive remarks have produced an improved version of our article.

\begin{itemize}
\item Conflict of interest/Competing interests: none
\item Availability of data and materials: Data sharing not applicable to this article as no datasets were generated or analysed during the current study.
\end{itemize}

{}



\begin{thebibliography}{1}




\bibitem{benaim}
	M. Bena\"im, C. Lobry, T. Sari and E. Strickler (2023) Untangling the role of temporal and spatial variations in persistence of populations.
	{\it Theoretical Population Biology} {\bf 154}:1--26 \url{https://doi.org/10.1016/j.tpb.2023.07.003}.

	
\bibitem{BLSSarXiv}
	M. Bena\"im, C. Lobry, T. Sari and E. Strickler (2023) A note on the top Lyapunov exponent of linear cooperative systems.
	\href{https://arxiv.org/abs/2302.05874}{
	arXiv:2302.05874}.
	 To appear in 
	\href{https://afst.centre-mersenne.org/page/article-a-paraitre/}{\emph{Annales de la Facult\'e des Sciences de Toulouse}}.
 
 \bibitem{BLSS}
	M. Bena\"im, C. Lobry, T. Sari and E. Strickler (2024) When can a population living on sink habitats persist?,
	{\it J. Math. Biol.},  {\bf 88}:19.
	\url{https://doi.org/10.1007/s00285-023-02039-8}
 
 \bibitem{Circuits}
	M. Bena\"im, C. Lobry, T. Sari and E. Strickler (2024)  Population growth on a time varying network,
	{\it Work in progress}.
	\url{https://hal.science/hal-04779418}
	
 
\bibitem{Berman}
A. Berman and R. Plemmons (1994) \emph{Nonnegative Matrices in the Mathematical Sciences}, SIAM.
\url{https://doi.org/10.1137/1.9781611971262}



\bibitem{Holt}
R.D. Holt (1997) On the evolutionary stability of sink populations. \emph{Evolutionary Ecology}
{\bf 11}:723--731.
\url{https://doi.org/10.1023/A:1018438403047}

\bibitem{Holt-et-al}
R.D. Holt, M. Barfield and A. Gonzalez (2003) Impacts of environmental variability in open populations and communities: ``inflation'' in sink environments. \emph{Theoretical population biology} {\bf 64}:315--330.
\url{https://doi.org/10.1016/
S0040-5809(03)00087-X}

\bibitem{Jacquez}
 J.A. Jacquez and C.P. Simon (1993)
 Qualitative Theory of Compartmental Systems.
 \emph{SIAM Review} {\bf 35}:43--79.
\url{https://doi.org/10.1137/1035003}

\bibitem{Jansen}
V.A. Jansen and J. Yoshimura (1998) Populations can persist in an environment consisting of
sink habitats only. 
\emph{Proc. Natl. Acad. Sci. USA} {\bf 95}: 3696--3698.
\url{https://doi.org/10.1073/pnas.95.7.3696}

\bibitem{Katriel}
\newblock G. Katriel (2022) Dispersal-induced growth in a time-periodic environment. 
\newblock\emph{J. Math. Biol.} {\bf 85}:24. 
\url{https://doi.org/10.1007/s00285-022-01791-7}	

  \bibitem{liu}
	S. Liu, Y. Lou, P. Song (2022) A new monotonicity for principal eigenvalues with applications to time-periodic patch models. \emph{SIAM J. Appl. Math} {\bf 82}:576--601. 
\url{https://doi.org/10.1137/20M1320973}

\bibitem{MeyerBook}
 C.D. Meyer (2000) \emph{Matrix analysis and applied linear algebra},  SIAM.
 \url{https://doi.org/10.1137/1.9781611977448}
 
  \bibitem{MSS}
  P. Monmarch\'e, S. Schreiber and E. Strickler (2025) 
Impacts of Tempo and Mode of Environmental Fluctuations on Population Growth: Slow- and Fast-Limit Approximations of Lyapunov Exponents for Periodic and Random Environments. {\it Bull. Math. Biol.} {\bf 87}:81. 
\url{https://doi.org/10.1007/s11538-025-01443-z}
\bibitem{Taussky}
O. Taussky (1949) A recurring theorem on determinants, {\it Amer. Math. Monthly}
{\bf 56}:672-676.
\url{https://doi.org/10.2307/2305561}
\url{}
 \bibitem{Varga}
 RS. Varga (2004)
 \emph{Ger\v{s}gorin and His Circles}.
 Springer Series in Computational Mathematics 36. Springer Berlin, Heidelberg.
 \url{https://doi.org/10.1007/978-3-642-17798-9}

\end{thebibliography}
\end{document}